\tikzset{
    /tikz/commutative diagrams/crossing over clearance=0.2em,
}
\theoremstyle{plain}
\newtheorem{theorem}{Theorem}[section]
\newtheorem{proposition}[theorem]{Proposition}
\newtheorem{lemma}[theorem]{Lemma}
\newtheorem{corollary}[theorem]{Corollary}
\theoremstyle{definition}
\newtheorem{definition}[theorem]{Definition}
\newtheorem{example}[theorem]{Example}
\theoremstyle{remark}
\newtheorem{remark}[theorem]{Remark}
\numberwithin{equation}{section}
\newcommand{\bC}{\mathbb{C}}
\newcommand{\bN}{\mathbb{N}}
\newcommand{\bP}{\mathbb{Z}_+}
\newcommand{\bQ}{\mathbb{Q}}
\newcommand{\bR}{\mathbb{R}}
\newcommand{\bZ}{\mathbb{Z}}
\newcommand{\Qsym}{\ensuremath{\operatorname{QSym}}}
\newcommand{\sgrp}{\mathbb{S}}
\newcommand{\set}{\mathrm{Set}} 
\newcommand{\des}{\mathrm{Des}} 
\newcommand{\cont}{\mathrm{cont}} 
\newcommand{\suchthat}{\;|\;}
\newcommand{\n}{N} 
\newcommand{\spec}{\ensuremath{\operatorname{sp}}}
\newcommand{\Dsc}{\mathrm{Des}} 
\newcommand{\dsc}{\mathrm{des}} 
\newcommand{\alpx}{\mathbf{x}}
\newcommand{\alpz}{\mathsf{z}}
\newcommand{\ds}[1]{\big\langle {#1}\big\rangle}
\newcommand{\wc}[2]{\mathcal{W}_{#1}^{#2}}
\newcommand{\qbin}[2]{\genfrac(){0pt}{}{#1}{#2}_q}
\newcommand{\qbinom}[2]{\left(\begin{array}{c}#1 \\#2\end{array}\right)_q} 
\newcommand{\sgrpp}{\mathbb{S}'}
\newcommand{\schub}[1]{\mathfrak{S}_{#1}}
\newcommand{\comaj}{\mathrm{comaj}} 
\newcommand{\inv}{\mathrm{inv}} 
\newcommand{\maj}{\mathrm{maj}} 
\newcommand{\rev}[1]{\mathrm{rev}(#1)} 
\newcommand{\mc}[1]{\mathcal{#1}} 
\newcommand{\qsym}{\mathrm{QSym}} 
\newcommand{\sym}{\mathrm{Sym}} 
 \newcommand{\qint}[1]{(#1)_q} 
\newcommand{\qfact}[1]{(#1)_q!} 
\newcommand{\sht}[1]{h(#1)}
\newcommand{\kly}{\mc{K}}
\newcommand{\proj}{\pi} 
\newcommand{\morph}{\Delta} 
\newcommand{\projmorph}{\rho} 
\newcommand{\supp}{\mathrm{Supp}}
\newcommand{\flag}[1]{\mathrm{Flag}({#1})}
\newcommand{\Perm}{\mathrm{Perm}}
\newcommand{\Top}{\mathrm{Top}}
\newcommand{\Hilb}{\mathrm{Hilb}}
\newcommand{\ideal}{{\sf IK}} 
\newcommand{\reduced}{\mathrm{Red}} 
\newcommand{\mca}[1]{\mathcal{#1}} 
\newcommand{\mcbk}{\mca{B}} 
\newcommand{\mcbkp}{\mca{B}_+} 
\newcommand{\mcbkn}{\mca{B}_n} 
\newcommand{\prob}{\mathbb{P}} 
\newcommand{\abb}{\mathcal{A}} 
\begin{document}

\title[The Klyachko algebra]{A $q$-deformation of an algebra of Klyachko\\ and Macdonald's reduced word formula }

\author{Philippe Nadeau}
\address{Univ Lyon, Universit\'e Claude Bernard Lyon 1, CNRS UMR
5208, Institut Camille Jordan, 43 blvd. du 11 novembre 1918, F-69622 Villeurbanne cedex, France}
\email{\href{mailto:nadeau@math.univ-lyon1.fr}{nadeau@math.univ-lyon1.fr}}

\author{Vasu Tewari}
\address{Department of Mathematics, University of Hawaii at Manoa, Honolulu, HI 96822, USA}
\email{\href{mailto:vvtewari@math.hawaii.edu}{vvtewari@math.hawaii.edu}}

\begin{abstract}
There is a striking similarity between Macdonald's reduced word formula and the image of the Schubert class in the cohomology ring of the permutahedral variety $\mathrm{Perm}_n$ as computed by Klyachko. Toward understanding this better, we undertake an in-depth study of a $q$-deformation of the $\sgrp_n$-invariant part of the rational cohomology ring of $\mathrm{Perm}_n$, which we call the $q$-Klyachko algebra.
We uncover intimate links between expansions in the basis of squarefree monomials in this algebra and various notions in algebraic combinatorics, thereby connecting seemingly unrelated results by finding a common ground to study them. Our main results are as follows.
\begin{itemize}
\item A $q$-analog of divided symmetrization ($q$-DS) using Yang-Baxter elements in the Hecke algebra. It is a linear form that picks up coefficients in the squarefree basis.
\item A relation between $q$-DS and the ideal of quasisymmetric polynomials involving work of Aval--Bergeron--Bergeron.
\item A family of polynomials in $q$ with nonnegative integral coefficients that specialize to Postnikov's mixed Eulerian numbers when $q=1$. We refer to these new polynomials as remixed Eulerian numbers. For $q>0$, their normalized versions occur as probabilities in the internal diffusion limited aggregation (IDLA) stochastic process.
\item A lift of Macdonald's reduced word identity in the $q$-Klyachko algebra.
\item The Schubert expansion of the Chow class of the standard split Deligne--Lusztig variety in type $A$, when $q$ is a prime power.
\end{itemize}
\end{abstract}

\maketitle

\tableofcontents

\section{Introduction}
\label{sec:intro}

The permutahedral variety in type $A$ has garnered a lot of attention recently, especially since the study of its cohomology ring plays a key role in the dramatic recent resolutions of various questions concerning log-concavity of various polynomials arising in combinatorics; see for instance \cite{Ber20,HK12}.

Let $\flag{n}$ denote the (type $A$) complete flag variety.
Denote by $\Perm_n\subset \flag{n}$ the closure of a torus orbit of a general point in $\flag{n}$. Equivalently, $\Perm_n$ can be described as the smooth projective toric variety described by the braid fan. It is the \emph{permutahedral variety of type $A$}.
The inclusion $\Perm_n\subset \flag{n}$ induces a pullback $i^*:H^*(\flag{n}, \bQ) \to H^*(\Perm_n,\bQ)$ between cohomology rings. In fact the image of $i^*$ coincides with the algebra of invariants $H^* (\Perm_n,\bQ)^{\sgrp_n}$.

It is this $\sgrp_n$-invariant subring of the cohomology ring, which is also isomorphic to the cohomology ring of the Peterson variety \cite{AbeHar19}, that was first studied by Klyachko \cite{Kly85}.
In particular he obtained a presentation for this ring of invariants, and this presentation has subsequently been rederived in \cite{Fuk15,HHM15}. This ring of invariants has dimension $2^{r}$ where $r=n-1$ as shown by Klyachko \cite{Kly85} and later by Stembridge \cite[\S 3]{Ste94}.
The dimension hints at the existence of a nice combinatorial linear basis for this space, and indeed, Klyachko constructs a basis indexed by subsets of $\{1,\dots,r\}$. We note that Klyachko's results work in all Lie types and we have emphasized the type $A$ picture here.

Recall that the cohomology ring $H^*(\flag{n},\bQ)$ has a basis given by Schubert classes $\sigma_w$ as $w$ ranges over elements in $\sgrp_n$, which brings us to our point of departure.
It is natural to inquire about the image of $\sigma_w$ under $i^*$.
Klyachko expresses $i^*(\sigma_w)$ as a sum of monomials indexed by reduced words of $w$; see \cite[\S8]{NT20} where we reproduce his proof\footnote{The original is in Russian.} with mild modifications.
Particularly remarkable is the fact that Klyachko's expression bears an unmistakable resemblance to Macdonald's reduced word formula \cite{Macdonald}, which we now recall. Let $\reduced(w)$ denote the set of reduced words for  a permutation $w\in \sgrp_n$. Macdonald \cite{Macdonald} established that for $w$ with length $\ell$ we have
\begin{align}
\label{eq:macd}
\schub{w}(1,\dots,1)=\frac{1}{\ell!}\sum_{a_1\dots a_{\ell}\in\reduced(w)}a_1\cdots a_{\ell},
\end{align}
where $\schub{w}$ denotes the Schubert polynomial indexed by  $w$. These polynomials are certain distinguished representatives of Schubert classes and were introduced by Lascoux and Sch\"utzenberger.
Note that the left-hand side counts the number of reduced pipe dreams for $w$.
We note here that Macdonald's reduced word identity has already been given several proofs \cite{Bil19,GT20,Ham20}, each shedding new light.
More importantly for us, a $q$-analogue (conjectured by Macdonald) was first proved by Fomin and Stanley \cite{FS94} by working in the NilCoxeter algebra; see also \cite{MP21}.

To make matters more intriguing, we note that Klyachko did not have access to the combinatorial formula for Schubert polynomials established by Billey-Jockusch-Stanley \cite{Bil93} and Fomin-Stanley \cite{FS94} as it came later. Furthermore, from the perspective of Klyachko's work, there is \emph{a priori} nothing that suggests that a particular representative of a Schubert class may be more important than others, whereas Macdonald's identity clearly deals with a specific representative.

This article stems from our desire to explain this resemblance between Klyachko's result and Macdonald's reduced word identity, as this appears to have been overlooked in literature as far as we can tell.
With the benefit of hindsight, we see that Klyachko's result foreshadows Macdonald's result.
In fact, given that Macdonald's result permits a $q$-analogue, we are able to generalize Klyachko's result by working a parameter $q$ into the setup.

\smallskip

Let $\mathbf{k}$ be a field of characteristic zero, and $q\in \mathbf{k}$ not equal to a nontrivial root of unity.
We introduce an algebra $\kly$, called the \emph{Klyachko algebra} henceforth, by considering a natural $q$-deformation of Klyachko's presentation \cite{Kly85}.
More precisely $\kly$ is the commutative algebra over $\mathbf{k}$ on the generators $\{u_i\suchthat i\in \mathbb{Z}\}$ subject to the following relations for $i\in\bZ$:
\begin{align}\label{eq:klyachko_presentation}
(q+1)u_i^2=qu_iu_{i-1}+u_iu_{i+1}.
\end{align}
The algebra that results on setting $u_i=0$ for $i\geq n$ and $i<0$, will be called $\kly_n$. In this case, when we additionally set $q=1$, the relations in ~\eqref{eq:klyachko_presentation} descend to Klyachko's presentation of $H^* (\Perm_n,\bQ)^{\sgrp_n}$~\cite{Kly85}.
We will be particularly  interested in the `positive half' $\kly_+$ of $\kly$ obtained by setting $u_i=0$ for all $i\leq 0$.
\medskip

\subsection{Discussion of main results}\label{sub:main_res}

The first half of the article focuses on the interplay between an algebraic operation that we call $q$-divided symmetrization, and the coefficients when considering expansions in the distinguished basis $\{u_I\}$ of $\kly$ comprising squarefree monomials indexed by finite subsets $I$ of $\bZ$.
The algebras $\kly_+$ and $\kly_n$ inherit bases of squarefree monomials by omitting terms involving $u_i$ where $i\notin \bZ_+$ or $i\notin \{1,\dots,n-1\}$ respectively.
The  $q$-divided symmetrization operator is a linear form that takes a polynomial $f\in \mathbf{k}[x_1,\dots,x_n]$ of degree $n-1$ as input and output $\ds{f}_n^q \in \mathbf{k}$; see \S \ref{sec:qDS} for the precise definition.
When $q=1$, $q$-divided symmetrization specializes to Postnikov's divided symmetrization \cite[\S 3]{Pos09}, and our lift naturally involves substituting Hecke operators $T_i$ acting on rational functions in $x_1,\dots,x_n$ as opposed to ordinary transpositions $s_i$ in Postnikov's symmetrization.

Fix $r\geq 1$. Our first main result is the following:
\begin{theorem}
\label{th:main_2}
Let $f$ be a polynomial in $\mathbf{k}[\alpx_{r+1}]$  with $\deg(f)=r$.
Consider the expansion of $f(u_1,u_2-u_1,\dots, u_{r}-u_{r-1},-u_{r})$ in the squarefree basis of $\kly_{r+1}$, and let $\Top_{r+1}(f)$ be the top coefficient in this expansion, namely that of $u_1\cdots u_r$. Then
\[\ds{f}^q_{r+1}= \qfact{r} \times \Top_{r+1}(f). \]
\end{theorem}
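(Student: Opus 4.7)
\emph{Plan.} Both sides of the asserted equality define linear forms on the finite-dimensional space $\mathbf{k}[\alpx_{r+1}]_r$ of homogeneous polynomials of degree $r$ in $r+1$ variables. The linearity of $\Top_{r+1}$ follows from the observation that the degree-$r$ component of $\kly_{r+1}$ is one-dimensional, spanned by the unique squarefree monomial $u_1 \cdots u_r$ in the generators $u_1,\ldots,u_r$; and $\ds{\cdot}^q_{r+1}$ is linear by its construction via Hecke operators in \S\ref{sec:qDS}. It therefore suffices to verify the identity on a spanning set, or to show that the two linear forms satisfy the same recursive characterization.

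\emph{Key algebraic bridge.} Setting $y_j := u_j - u_{j-1}$ (with the convention $u_0 = u_{r+1} = 0$), the defining relation $(q+1) u_i^2 = q u_i u_{i-1} + u_i u_{i+1}$ of $\kly_{r+1}$ is equivalent, for each $i \in \{1,\ldots,r\}$, to
\[
u_i \bigl(y_{i+1} - q\, y_i\bigr) = 0.
\]
This says that whenever $u_i$ is present as a factor, one may freely replace $y_{i+1}$ by $q\, y_i$. Since $\Top_{r+1}(f)$ is the coefficient of $u_1 u_2 \cdots u_r$, a monomial containing \emph{every} $u_i$ for $i = 1,\ldots,r$, such substitutions propagate unhindered through the coefficient extraction. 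This is exactly the algebraic shadow of the $q$-Hecke action underlying $q$-divided symmetrization: in the presence of $u_i$, the pair $(y_i,y_{i+1})$ transforms according to a Hecke-like swap with parameter $q$.

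\emph{Inductive step and main obstacle.} I would proceed by induction on $r$. In the inductive step, peel off the last factor $u_r$ from the target monomial and use $u_r y_{r+1} = q\, u_r y_r$ to eliminate $x_{r+1}$-dependence from $f$, reducing the problem to a computation inside $\kly_r$ and matching, term by term, the recursive formula for $q$-DS in terms of Yang--Baxter elements (which peels off the last Hecke operator $T_r$). The normalization is pinned down by the direct check $\Top_{r+1}(x_1 x_2 \cdots x_r) = 1$ together with $\ds{x_1 x_2 \cdots x_r}^q_{r+1} = \qfact{r}$. The principal technical obstacle is the combinatorial bookkeeping during this inductive reduction: expanding $f(y_1,\ldots,y_{r+1})$ produces many non-squarefree monomials in the $u_i$, each of which must be iteratively reduced via the quadratic relation while its contribution to $u_1 \cdots u_r$ is tracked, and the resulting $q$-powers and $q$-binomial sums must be shown to align with the Hecke-algebra coefficients defining $q$-DS. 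This generalizes, with the added subtlety of $q$-weighted Hecke operators, the $q = 1$ analysis used in \cite{NT20}.
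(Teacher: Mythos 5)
Your key algebraic observation is correct and elegant: writing $y_j = u_j - u_{j-1}$, the relation $(q+1)u_i^2 = qu_iu_{i-1} + u_iu_{i+1}$ is exactly $u_i(y_{i+1} - qy_i) = 0$, a Hecke-like twist. This is a nice reformulation of the Klyachko relation and indeed explains, at an intuitive level, why $\Top_{r+1}$ and $\ds{\cdot}^q_{r+1}$ should agree. But the proof is not complete, and the gap is exactly the one you name as your ``principal technical obstacle.'' The inductive step as sketched does not work: when you expand $f(y_1,\ldots,y_{r+1})$ you obtain monomials $\prod_i u_i^{e_i}$ many of which have $e_r = 0$, so you cannot ``freely replace $y_{r+1}$ by $qy_r$'' before reduction --- those monomials can still acquire $u_r$ during the iterative reduction to the squarefree basis, because the Klyachko relation spreads mass outward. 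So ``peeling off $u_r$'' does not cleanly eliminate $x_{r+1}$-dependence, and you are left exactly where you started: needing to track contributions through a nontrivial rewriting, with no mechanism for matching them against the Hecke operators on the qDS side.

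The paper sidesteps all of this by a choice of basis. Instead of a general $f$, it checks the identity on the polynomials $y_c = x_1^{c_1}(x_1+x_2)^{c_2}\cdots(x_1+\cdots+x_n)^{c_n}$ with $|c| = n-1$, for which the image under $\rho_n$ is simply the monomial $u^c$; this turns the left-hand side into the coefficient of $u_{[1,r]}$ in $u^c$, which satisfies the Klyachko recurrence by direct multiplication. On the right-hand side, the remixed Eulerian numbers $A_c^q = \ds{y_c}^q_{r+1}$ are shown (in Theorem~\ref{thm:uniqueness_via_recurrence_petrov}) to satisfy the \emph{same} recurrence, and this is established via Petrov's factoring lemma (Lemma~\ref{lem:petrov_recursion}), which is the precise technical device you are missing for propagating $q$-DS computations inductively. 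Both sides are then pinned down by the same initial condition $A_{(1^r)}^q = \qfact{r}$, and uniqueness closes the argument. If you want to salvage your approach, the thing to add is a lemma of Petrov type letting you split qDS along a factor $qx_i - x_{i+1}$; once you have that, your inductive plan becomes essentially the paper's proof in disguise, but with the roles of the $y_c$ basis made explicit.
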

\noindent We are in fact able to extend Theorem~\ref{th:main_2} to a statement involving coefficients of arbitrary squarefree monomials occuring in expansions in the full Klyachko algebra $\kly$; see Theorem~\ref{th:expansion_f}.
We proceed to describe our results borne out of this interplay between the Klyachko algebra and $q$-divided symmetrization, with a particular focus on combinatorially pertinent families of polynomials.
\smallskip

Our second main result concerns a family of polynomials that we call remixed Eulerian numbers.
While our original definition describes them as $q$-divided symmetrizations of special polynomials $y_c$, the link to the Klyachko algebra in Theorem~\ref{th:main_2} permits an alternative characterization that we present next.
To state it we need some notation.
For a fixed positive integer $r$, let $\wc{r}{r}$ be the set of sequences of nonnegative integers $(c_1,\dots,c_r)$ whose parts sum to $r$.
We let $\qfact{r}$ denote the usual $q$-factorial of $r$.
\begin{theorem}
\label{th:main_1}
There exists a \emph{unique} family of polynomials $\{A_c(q)\}_{c\in\wc{r}{r}}$ with $A_{(1^{r})}(q)=\qfact{r}$ that satisfies the following relations: for any $i=1,\ldots,r$ such that $c_i\geq 2$,
		\begin{equation*}
		(q+1)A_{(c_1,\dots,c_{r})}(q)= qA_{(c_1,\dots,c_{i-1}+1,{c_i}-1,\dots,c_{r})}(q)+A_{(c_1,\dots,c_i-1,c_{i+1}+1,\dots,c_{r})}(q).
		\end{equation*}
If $i=1$ \textup{(}resp. $i=r$\textup{)}, then the first \textup{(}resp. second\textup{)} term on the right hand side is to be omitted.
\end{theorem}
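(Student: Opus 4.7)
The plan is to realize the family $\{A_c(q)\}$ inside the Klyachko algebra $\kly_{r+1}$ and to derive both existence and uniqueness from the fact (established earlier in the paper) that the squarefree monomials $\{u_I : I \subseteq \{1,\ldots,r\}\}$ form a $\mathbf{k}$-basis of $\kly_{r+1}$. In particular, the degree-$r$ component $(\kly_{r+1})_r$ is one-dimensional, spanned by $u_1 u_2 \cdots u_r$.

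\textbf{Existence.} For each $c \in \wc{r}{r}$, let $u^c := u_1^{c_1} u_2^{c_2} \cdots u_r^{c_r} \in \kly_{r+1}$. Since $u^c$ has degree $r$, there is a unique $\lambda_c(q) \in \mathbf{k}$ with $u^c = \lambda_c(q)\, u_1 \cdots u_r$, and I set $A_c(q) := \qfact{r} \cdot \lambda_c(q)$. The initial condition $A_{(1^r)}(q) = \qfact{r}$ is immediate from $u^{(1^r)} = u_1 \cdots u_r$. For the recursion, fix $i$ with $c_i \geq 2$ and multiply the defining relation \eqref{eq:klyachko_presentation} by the degree-$(r-2)$ monomial $u^c / u_i^2$; this yields
$$(q+1) u^c = q u^{c^-} + u^{c^+}$$
in $\kly_{r+1}$, where $c^{\pm}$ are the compositions in the statement. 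Comparing scalars in front of $u_1 \cdots u_r$ and multiplying through by $\qfact{r}$ gives the desired recursion. The boundary behavior at $i = 1$ and $i = r$ arises from $u_0 = u_{r+1} = 0$ in $\kly_{r+1}$, which suppresses the corresponding term.

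\textbf{Uniqueness.} Suppose $\{A'_c(q)\}$ is a second family satisfying the same conditions, and set $g(c) := A_c(q) - A'_c(q)$, so that $g((1^r)) = 0$ and $g$ satisfies the homogeneous version of the recursion. I define a $\mathbf{k}$-linear functional $\Phi$ on the degree-$r$ part of the free polynomial algebra $\mathbf{k}[u_1,\ldots,u_r]$ by $\Phi(u^c) := g(c)$. The homogeneous recursion is precisely the statement that $\Phi$ annihilates every element of the form $(u^c / u_i^2) \cdot \bigl((q+1) u_i^2 - q u_i u_{i-1} - u_i u_{i+1}\bigr)$ with $c_i \geq 2$, and a straightforward degree count shows that such elements span the degree-$r$ slice of the defining ideal of $\kly_{r+1}$. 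Hence $\Phi$ descends to a functional on the one-dimensional space $(\kly_{r+1})_r$; since $\Phi(u_1 \cdots u_r) = g((1^r)) = 0$, the map $\Phi$ vanishes identically and $g \equiv 0$.

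\textbf{Main obstacle.} The whole argument rests on the basis theorem for $\kly_{r+1}$, which gives a one-dimensional top-degree piece; granting that structural input, both halves of the theorem reduce to bookkeeping with the defining relation. An alternative route would identify $A_c(q)$ with $\ds{y_c}^q_{r+1}$ for $y_c := \prod_{i} (x_1 + \cdots + x_i)^{c_i}$ and invoke Theorem~\ref{th:main_2}, but the direct Klyachko-algebra approach above has the benefit of producing uniqueness as a clean consequence of the same one-dimensionality used for existence.
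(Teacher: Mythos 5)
Your proof is correct, and it takes a genuinely different route from the paper's. The paper defines $A_c(q)$ as $\ds{y_c}_{r+1}^q$ and verifies conditions (a), (b) via the Petrov-style recursion (Lemma~\ref{lem:petrov_recursion}), then handles uniqueness by deriving the auxiliary relation~\eqref{eq:recurrence_A} (the ``gambler's ruin'' solution of the tridiagonal system) and inducting on $e(c)=|c|-|\supp(c)|$. You instead realize $A_c(q)$ directly as $\qfact{r}$ times the coefficient of $u_{[r]}$ in the expansion of $u^c$ in $\kly_{r+1}$, so that the recursion (b) is just multiplication by the defining relation~\eqref{eq:klyachko_presentation}, and your uniqueness argument is a clean functional-analytic one: the linear form $c\mapsto g(c)$ extends to $\Phi$ on $\mathbf{k}[\alpz_r]_r$, is shown to annihilate the degree-$r$ slice of $\ideal_{r+1}$ (which, being generated in degree $2$, is spanned exactly by the elements $(z^c/z_i^2)g_i$ with $c_i\ge 2$), and hence factors through the one-dimensional space $(\kly_{r+1})_r$ where it kills the spanning vector $u_{[r]}$. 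The trade-off: your route relies entirely on Proposition~\ref{prop:uI_basis} (the squarefree basis theorem, which is the nontrivial structural input guaranteeing $\dim(\kly_{r+1})_r=1$), whereas the paper's proof of this theorem is independent of the Klyachko algebra and is in fact used to \emph{establish} the link with $\kly$ in Theorem~\ref{th:qDS_and_K}. So the two proofs invert the logical dependency: the paper proves the recursion theorem first and deduces the Klyachko-algebra interpretation from it, while you take the algebra as primary and the recursion theorem as a corollary. Both are valid given the paper's independent proof of Proposition~\ref{prop:uI_basis}; yours has the merit of making existence and uniqueness flow from a single source (one-dimensionality of the top graded piece), while the paper's yields the explicit recursion~\eqref{eq:recurrence_A}, which it needs later anyway (e.g.\ for the positivity argument in Proposition~\ref{prop:mixed_positive_coeffs}).
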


It follows easily from \cite{NT20,Pet18} that $A_c(1)=A_c$ where the latter are the mixed Eulerian numbers introduced by Postnikov \cite{Pos09}. Furthermore, for $c\in \wc{r}{r}$, the following properties hold:
\begin{enumerate}
\item $A_c(q)\in \bN[q]$.
\item $A_c(q)$ is the probability to end in the configuration $(1^{r})$ starting from the configuration $c$ in the process described in \S\ref{sec:remixed}.
\item  Given $c\in \wc{r}{r}$, define $\widetilde{c}\coloneqq(c,0)$. Let $\mathsf{Cyc}(c)$ be the set of all $c'$ such that $(c',0)$ is a cyclic rotation of $\widetilde{c}$. We have the following cyclic sum rule:
\[
\sum_{c'\in \mathsf{Cyc}(c)}A_{c'}(q)=\qfact{r}.
\]
\end{enumerate}

We postpone further study of these polynomials to a separate article \cite{NT21_remixed}, and proceed to make several remarks.
It is worth noting that the nonnegativity of the coefficients of $A_c(q)$ does not immediately follow from the unique characterization above. One needs to derive a better recursion, which in turn implies a nontrivial divisibility property; see \S\S\ref{sub:positivity rmen}.
If one considers $A_c(q)$ where the $c$ are drawn from $\mathsf{Cyc}((r,0^{r-1}))$, then one obtains a familiar family of polynomials:  they are the MacMahon-Carlitz $q$-Eulerians, i.e. the $q$ tracks the major index of  permutations on $r$ letters with a fixed number of descents.
Note furthermore that there are $\mathrm{Cat}_r\coloneqq \frac{1}{r+1}\binom{2r}{r}$ cyclic classes determined by compositions in $\wc{r}{r}$, and we get a Mahonian distribution for each class.
\smallskip

Let $\Qsym$ denote the ring of quasisymmetric functions
in $x_1,x_2,\dots$. We may truncate quasisymmetric functions to obtain quasisymmetric polynomials by declaring $x_i=0$ for $i>m$ for some choice of positive integer $m$.
We denote by $\qsym_m^+$ the ideal in $\bQ[x_1,\dots,x_m]$ generated by positive degree quasisymmetric polynomials.
Of particular interest to is the quotient $\bQ[x_1,\dots,x_m]/\qsym_m^{+}$, studied in the work of Aval--Bergeron--Bergeron \cite{ABB04}.
These authors established that this quotient has dimension $\mathrm{Cat}_m$ and possesses a monomial basis (henceforth the ABB basis) naturally indexed by lattice paths subject to certain constraints.
Since we care particularly about degree $m-1$ polynomials in the context of $q$-divided symmetrization, we note here that in the top degree of $\bQ[x_1,\dots,x_m]/\qsym_m^{+}$, the ABB monomial basis is identified with Dyck paths from $(0,0)$ to $(m-1,m-1)$, as the exponent vectors of these monomials give `prototypical' Catalan objects.

Principal specializations of truncations of quasisymmetric functions are important in combinatorics especially in the context of the study of $P$-partitions.
Our third main result demonstrates that $q$-divided symmetrization, computations in $\kly$, and the ABB basis fit  nicely in this context.

\begin{theorem}
\label{th:main_3}
Let $f$ be a quasisymmetric function of degree $r>0$.
\begin{enumerate}
\item We have
\begin{align}
\label{eq:main_1.3}
\sum_{j\geq 1}f(1,q,\dots,q^{j-1})t^j=\frac{\sum_{m}\ds{f(x_1,\dots,x_m)}_{r+1}^qt^m}{(t;q)_{r+1}}.
\end{align}
\item The expression $f(u_1,u_2-u_1,\dots,u_{r}-u_{r-1},-u_r)$ vanishes in $\kly_{r+1}$.
\item $\ds{\cdot}_{r+1}^q$ vanishes on the space of degree $r$ polynomials in $\qsym_{r+1}^+$.
\end{enumerate}
\end{theorem}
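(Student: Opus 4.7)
My plan is to prove part (3) first via an argument generalizing Postnikov's $q=1$ vanishing theorem to the Hecke algebra setting, then deduce (2) from (3) together with the extension of Theorem~\ref{th:main_2} (namely Theorem~\ref{th:expansion_f}), and finally establish (1) by an independent principal specialization computation.

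For (3), recall that $\ds{\cdot}_{r+1}^q$ is defined via Yang-Baxter elements in the Hecke algebra. I would show that this linear functional annihilates the degree-$r$ component of $\qsym_{r+1}^+$. At $q=1$ this is Postnikov's theorem \cite{Pos09}, provable via volume arguments on the permutahedron or via the Aval--Bergeron--Bergeron basis of $\bQ[x]/\qsym^+$ \cite{ABB04}. For the $q$-lift, the approach is to establish a commutation identity between the Yang-Baxter Hecke element implementing $q$-DS and multiplication by a positive-degree quasisymmetric polynomial; equivalently, one shows $q$-DS factors through a Hecke-theoretic analog of the ABB quotient. Part (2) then follows: by Theorem~\ref{th:expansion_f} every squarefree coefficient of $f(u_1,u_2-u_1,\ldots,-u_r)$ in $\kly_{r+1}$ is a $q$-DS evaluation (at varying sizes $\le r+1$) of a polynomial canonically derived from $f$, and when $f$ is quasisymmetric each such derived polynomial lies in the quasisymmetric ideal of its corresponding size, so applying (3) at each size forces every squarefree coefficient to vanish.

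For (1), expand $f=\sum c_\alpha M_\alpha$ and verify termwise for each $M_\alpha$ with $|\alpha|=r$. A direct principal specialization gives
\[
\sum_{j\ge 1} M_\alpha(1, q, \ldots, q^{j-1})\,t^j \;=\; \frac{t^k\,q^{\sum_l (l-1)\alpha_l}}{(1-t)\prod_{l=1}^k (1 - q^{s_l} t)},
\]
where $k=\ell(\alpha)$ and $s_l=\alpha_l+\cdots+\alpha_k$. Multiplying by $(t;q)_{r+1}$ cancels all the denominator factors (since $\{0,s_1,\ldots,s_k\}\subset\{0,1,\ldots,r\}$), leaving a polynomial in $t$ of degree $r$. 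The right-hand side $\sum_m \ds{M_\alpha(x_1,\ldots,x_m)}_{r+1}^q\,t^m$ is computed via Theorem~\ref{th:main_2} by extracting the coefficient of $u_1\cdots u_r$ from the squarefree expansion of $M_\alpha(u_1,\ldots,-u_{m-1})$ in $\kly_m$, and matching the two polynomials is a direct combinatorial calculation. The principal obstacle is (3): finding the correct Yang-Baxter commutation identity to lift Postnikov's $q=1$ vanishing theorem to the Hecke setting is the technical heart of the argument.
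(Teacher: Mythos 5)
Your proposal does not succeed, and the gaps are at the very places you flag as difficult; moreover the paper takes an essentially orthogonal route that avoids them.

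The paper's argument is built on a single key lemma (Theorem~\ref{thm:rho_fundamental}): for a composition $\alpha\vDash r$ and $m\geq \ell(\alpha)$, the image $\projmorph_+(F_\alpha(x_1,\dots,x_m))$ in $\kly_+$ is a \emph{single} squarefree term, namely $\frac{q^{\n(\alpha)}}{\qfact{r}}u_{[1,r]+m-\ell(\alpha)}$. This is proved by induction on $m-\ell(\alpha)$ using the elementary recursion expressing $F_\alpha$ in $m$ variables via $F_\alpha$ and $F_{\alpha'}$ in $m-1$ variables, together with the Klyachko-algebra identity of Corollary~\ref{cor:mult_by_xi}. Once one has this single-term expansion, part (2) is immediate: for $m=r+1$ the interval $[1,r]+m-\ell(\alpha)$ overshoots $[r]$, so the term dies in $\kly_{r+1}$. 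Part (3) then follows because $\projmorph_{r+1}$ is an algebra morphism annihilating every $F_\alpha(x_1,\dots,x_{r+1})$, hence the whole ideal $\qsym_{r+1}^+$, and Theorem~\ref{th:qDS_and_K} converts this into vanishing of qDS. Part (1) follows by applying the specialization $\spec$ to the single-term formula and summing with Cauchy's $q$-binomial theorem. The logical order is thus: single-term lemma $\Rightarrow$ (2) $\Rightarrow$ (3) and separately $\Rightarrow$ (1).

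Your plan has two genuine gaps. First, for (3) you openly say the Yang--Baxter commutation identity lifting Postnikov's $q=1$ vanishing to the Hecke setting is ``the principal obstacle'' and ``the technical heart''---this step is absent, and nothing in the paper suggests such a commutation holds or would be provable directly at the level of $\square_{w_o}$; the paper instead circumvents it entirely via $\kly$. Second, for (1) you propose expanding $f$ in the monomial basis $M_\alpha$ and ``matching'' via a direct combinatorial calculation. But the authors state explicitly in Remark~\ref{rem:proof_comparison} that while the $q=1$ proof in \cite{DS} used exactly this $M_\alpha$ route, ``the qDS of monomial quasisymmetric polynomials is not as nice or easy,'' and that they ``do not currently know of ways to arrive at Corollary~\ref{cor:fundamental} outside of'' the Klyachko-algebra approach. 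Your left-hand-side principal specialization of $M_\alpha$ is correct, but the matching with $\sum_m\ds{M_\alpha(x_1,\dots,x_m)}_{r+1}^q t^m$ is not a ``direct'' computation---the descent sets appearing in Proposition~\ref{prop:qDS for monomials} vary over the monomials in $M_\alpha$ in a way that does not telescope cleanly. The crucial idea you are missing is to work in the fundamental basis $F_\alpha$, where the Klyachko expansion collapses to one term, rather than the monomial basis $M_\alpha$, where it does not.

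A minor point: your argument from (3) to (2) via Theorem~\ref{th:expansion_f} is both more elaborate than needed and rests on an unverified claim that each derived polynomial $Q_I\cdot f^I$ lies in the appropriate quasisymmetric ideal. In fact the degree-$r$ homogeneous component of $\kly_{r+1}$ is one-dimensional, spanned by $u_{[r]}$, so $\projmorph_{r+1}(f)=\Top_{r+1}(f)\cdot u_{[r]}$, and (2) follows from (3) in one line. But this observation does not repair the earlier gaps, since (3) is what you have not proved.
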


If $f=g+h$ where $h\in \qsym_{r+1}^+$ and $g$ is expressed in the basis of ABB monomials, then
\[
\ds{f}_{r+1}^q=g(1,q,q^2,\dots,q^r).
\]
In particular for $\alpx^c$ an ABB monomial of degree $r$ we have
\[
\ds{\alpx^c}_{r+1}^q=q^{\mathrm{coarea}(\mathsf{Dyck}(c))}.
\]
Here $\mathsf{Dyck}(c)$ is the Dyck path corresponding to $c$, and coarea refers to the well-known \emph{complementary area} statistic.

Let $\sym_n^+$ denote the ideal in $\bQ[x_1,\dots,x_n]$ generated by positive degree symmetric polynomials in $x_1,\dots,x_n$. The quotient $\bQ[x_1,\dots,x_n]/\sym_n^+$ is the well-known coinvariant algebra, which is known to be isomorphic to $H^{*}(\flag{n},\bQ)$ by  Borel's isomorphism under which the Schubert classes can be regarded as Schubert polynomials.
Clearly $\sym_n^+ \subset \qsym_n^+$.
By Theorem~\ref{th:main_3} we have that $\ds{\cdot}_n^q$ vanishes on degree $n-1$ polynomials in $\sym_n^+$.

It is thus natural to inquire if anything substantial can be said about the $q$-divided symmetrization of Schubert polynomials of the appropriate degree. It turns out that the  $\ds{\schub{w}}_{n}^q$ for $w\in \sgrp_{n}$ of length $n-1$ possess a geometric meaning in the setting of Deligne--Lusztig varieties, thanks to the work of D.~Kim \cite{Kim20}.
Given the link between $q$-divided symmetrization and $\kly$ in Theorem~\ref{th:main_2}, we are thus led to consider expansions for Schubert polynomials in $\kly$.
Fortuitously, our initial motivation to unify Klyachko's identity and Macdonald's identity aligns with this new endeavour.

This brings us to our fourth main result.
We will need some notions attached to reduced words.
Suppose $w\in \sgrp_n$ has length $\ell$.
Let $\mathbf{a}=a_1\cdots a_{\ell}\in \reduced(w)$.
We define $\comaj(\mathbf{a})$ to be the sum of all $i\in \{1,\dots,\ell-1\}$ such that $a_{i}<a_{i+1}$.
\begin{theorem}
\label{th:main_4}
\textup{(}q-Klyachko-Macdonald identity \textup{)}
Let $w\in \sgrp_n$ of length $\ell$.
The following equality holds in $\kly_+$.
\begin{align*}
\schub{w}(u_1,u_2-u_1,\dots)=\frac{1}{\qfact{\ell}}\sum_{\mathbf{a}\in \reduced(w)} q^{\comaj(\mathbf{a})}u_{a_1}u_{a_2}\cdots u_{a_\ell}.
\end{align*}
\end{theorem}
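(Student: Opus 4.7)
The plan is to verify the identity by expanding both sides in the squarefree basis $\{u_I\}$ of $\kly_+$ and matching the coefficient of $u_I$ for each finite $I\subseteq\bZ_+$ with $|I|=\ell$; both sides are homogeneous of degree $\ell$, so this suffices. The principal tool is the extension of Theorem~\ref{th:main_2} referred to in the text as Theorem~\ref{th:expansion_f}, which computes coefficients of squarefree monomials in expressions of the form $f(u_1,u_2-u_1,\ldots)$ as $q$-divided symmetrizations of appropriate specializations of $f$.

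Applied to the LHS with $f=\schub{w}$, Theorem~\ref{th:expansion_f} writes $[u_I]\,\schub{w}(u_1,u_2-u_1,\ldots)$ as $1/\qfact{\ell}$ times a $q$-divided symmetrization of $\schub{w}$ in variables indexed by $I$, up to an explicit $q$-power. By Theorem~\ref{th:main_3}, $q$-divided symmetrization vanishes on the quasisymmetric ideal and agrees with principal specialization on the ABB basis, so the LHS coefficient becomes a concrete $q$-weighted scalar. Expanding $\schub{w}$ via the Billey--Jockusch--Stanley formula then recasts this scalar as a sum over pairs (reduced word $\mathbf{a}$, compatible sequence $\mathbf{b}$) weighted by a principal specialization. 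For the RHS, each $u_{a_1}\cdots u_{a_\ell}$ is reinterpreted as $g(u_1,u_2-u_1,\ldots)$ for $g(\mathbf{x})=\prod_{i=1}^{\ell}(x_1+\cdots+x_{a_i})$, so the coefficient $[u_I]$ of the RHS is again a $q$-divided symmetrization; summing with weights $q^{\comaj(\mathbf{a})}$ yields a second combinatorial expression.

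The equality of the two combinatorial expressions should be recognizable as (a variant of) the Fomin--Stanley $q$-Macdonald reduced word identity~\cite{FS94}: the principal specialization of the BJS expansion of $\schub{w}$ is exactly $(1/\qfact{\ell})\sum_{\mathbf{a}}q^{\comaj(\mathbf{a})}$, up to standard conventions. The main obstacle I anticipate is the careful bookkeeping of $q$-weights; I would avoid direct squarefree reductions via the defining relation $(q+1)u_i^2=qu_iu_{i-1}+u_iu_{i+1}$ and instead route the entire argument through $q$-divided symmetrization using Theorem~\ref{th:expansion_f}, so that the technical combinatorics is absorbed into Fomin--Stanley's already-established machinery and the proof reduces to a single recognizable scalar identity.
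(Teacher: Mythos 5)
Your proposal diverges fundamentally from the paper's argument, and unfortunately contains a gap that I do not see how to close. The paper proves Theorem~\ref{th:main_4} (restated as Theorem~\ref{th:qKM}) by adapting Fomin--Stanley's NilCoxeter machinery: it works with $\schub{n}=\sum_w\schub{w}v_w$ in $\kly_+\otimes NC_n$, proves a new ``Yang--Baxter'' relation (Lemma~\ref{lemma:Yang_Baxter}, stating $h_i(a)h_{i+1}(b)h_i(c)=h_{i+1}(c)h_i(a+c)h_{i+1}(b-c)$ whenever $c(a+c-b)=0$) whose hypothesis is exactly what the Klyachko relations supply for $a=qu_j-qu_{j-1}$, $b=u_{j+1}-qu_j$, $c=(1-q)u_j$, and then derives the infinite product factorization~\eqref{eq:schubert_nilcoxeter_factorization} from which the identity drops out by the coefficient-extraction lemma of~\cite{FS94}. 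This is a direct, constructive proof.

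The central problem with your plan is the closing claim that ``the proof reduces to a single recognizable scalar identity.'' The desired identity lives in the infinite-dimensional algebra $\kly_+$; one must match the coefficient of $u_I$ for every finite $I\subseteq\bZ_+$ with $|I|=\ell$. Applying the specialization $\spec:u_i\mapsto\qint{i}$ to Theorem~\ref{th:main_4} collapses all of this information to a single scalar and yields the Fomin--Stanley $q$-Macdonald identity (this is precisely Corollary~\ref{cor:specialization_qKM} in the paper). So the FS identity is a strictly weaker \emph{consequence} of the statement you are trying to prove, not a lemma that can be turned around to prove it. A second, more concrete issue: even for the single top coefficient $I=[1,\ell]$, matching the BJS expansion against the right-hand side does not work term-by-term over reduced words. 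Take $w=2143$: the compatible monomials for $\mathbf{a}=13$ are $x_1x_2+x_1x_3$, and $\projmorph_+(x_1x_2+x_1x_3)=u_1u_3-\tfrac{1}{1+q}u_1u_2$ in $\kly_+$, whereas the corresponding summand $\tfrac{q^{\comaj(13)}}{\qfact{2}}u^{\cont(13)}=\tfrac{q}{1+q}u_1u_3$ is different; only after summing with the $\mathbf{a}=31$ contribution does the excess cancel. Thus the ``careful bookkeeping of $q$-weights'' you flag as the main obstacle is actually a global cancellation across reduced words, and I do not see how to organize it without importing something like the paper's product factorization. You also misstate the FS specialization (it reads $\schub{w}(1,q,\dots)=\tfrac{1}{\qfact{\ell}}\sum_{\mathbf{a}}q^{\comaj(\mathbf{a})}\qint{a_1}\cdots\qint{a_\ell}$, with the $q$-integer factors), which suggests the scalar reduction you have in mind is not quite the intended one.
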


 Theorem~\ref{th:main_4} is a generalization of Klyachko's result for $q=1$ in the quotient $\kly_n$. The reader is invited to compare our proof in \S\ref{sec:qKM} with Klyachko's proof presented in \cite[\S8]{NT20}.  It should become apparent that the latter proof does not modify so as to incorporate the $q$. Instead we adapt the argument in \cite{FS94} to $\kly_+\otimes NC_n$ and employ a `Yang-Baxter' relation that works particularly well with the relations in $\kly_+$.
Here $NC_n$ refers to the  (type $A$) NilCoxeter algebra.

We define the \emph{content} of $\mathbf{a}$, denoted by $\cont(\mathbf{a})$ to be the sequence of nonnegative integers whose $i$th entry counts the number of $i$s in $\mathbf{a}$ for all $i\geq 1$. The following two equalities  are immediate consequences of Theorem~\ref{th:main_4}. \begin{align*}
\schub{w}(1,q,q^2,\dots)&=\frac{1}{\qfact{\ell}}\sum_{\mathbf{a}\in \reduced(w)} q^{\comaj(\mathbf{a})}\qint{a_1}\qint{a_2}\cdots \qint{a_\ell},\\
\ds{\schub{w}}_{\ell+1}^q&=\frac{1}{\qfact{\ell}}\sum_{\mathbf{a}\in \reduced(w)} q^{\comaj(\mathbf{a})}A_{\cont({\bf a})}(q),
\end{align*}
The first one is the $q$-Macdonald reduced word identity \cite{FS94} and is obtained by specialization, while the second formula is a consequence of Theorem~\ref{th:main_2}.

 We turn our attention to $a_w(q)\coloneqq \ds{\schub{w}}_{\ell+1}^q$ in this latter formula, from which it is not obvious that $a_w(q)\in \bN[q]$. That this is indeed true follows from work of Kim \cite{Kim20}, which has the feature that whilst it implies the nonnegativity and integrality of the coefficients of $a_w(q)$, the resulting explicit expression unfortunately involves generalized Littlewood-Richardson coefficients. 
 These are in general hard to compute, and finding a combinatorial rule to describe them is a wide open problem.
We approach understanding $a_w(q)$ from our explicit formula above and uncover some curious properties of these polynomials, extending previous work of the authors~\cite{NT20} in the case $q=1$. We briefly describe some developments next, referring the reader to \S\S\ref{sub:Kim}, \S\S\ref{sub:properties of a_w}, and \S\S\ref{sub:special aw}\smallskip

 We obtain a \emph{cyclic sum rule} which generalizes \cite[Theorem 1.5]{NT20} nicely. Informally our rule says that $\sgrpp_n$ can be decomposed into $\mathrm{Cat}_{n-1}$ cyclic classes, in such a way that the sum of the $a_w(q)$ as $w$ ranges over a fixed class tracks the distribution of $\comaj$ over the set of reduced words for any choice of representative from that class.
 
This summatory rule suggests that an approach to describing the $a_w(q)$ combinatorially might involve decomposing the set of reduced words of ${\bf a}$ in some hitherto undetermined manner.
 Nevertheless, in \S\S\ref{sub:special aw}, we are able to deal with special cases, namely, $w$ being a \L ukasiewicz permutation or a Grassmannian permutation.
 
Additionally, for the class of permutations we call quasiindecomposable permutations, we show that the corresponding $a_w(q)$ `encode' the generating function tracking principal specializations of certain Schubert polynomials. Specifically, for $v$ an indecomposable permutation in $\sgrp_{p+1}$ of length $\ell$, we show in Theorem~\ref{thm:quasiindecomposable} that
 \begin{align}
 \label{eq:main_6}
 \sum_{j\geq 0}t^j\;\mathfrak{S}_{1^j\times v}(1,q,q^2,\dots)=\frac{{\displaystyle{\sum_{m=0}^{\ell-p}a_{1^m\times v\times 1^{\ell-p-m}}(q)t^m}}}{(t;q)_{\ell+1}}.
 \end{align}
See \S\S\ref{sub:properties of a_w} for details on undefined notation.
A remark of independent interest is that we do not know an alternative proof for the fact that the rational function expressing the left-hand side of \eqref{eq:main_6} has a numerator which is a polynomial in $\bN[q,t]$.

\smallskip

This brings us to our final stop.
The reader may have observed that two of our highlighted results \textemdash{} equation~\ref{eq:main_1.3} and equation \eqref{eq:main_6}\textemdash{}  are equalities that involve the specialization $x_i\mapsto q^{i-1}$ on one side and $q$-divided symmetrization of either Schubert polynomials or quasisymmetric polynomials on the other side. \emph{What do these two families have common?} Motivated by this we introduce a property of polynomials that we refer to as the interval property. In fact this property is shared by numerous expansions in this work.
Broadly speaking, we say that a polynomial possesses the said property if its image in $\kly$ (or $\kly_+$) expands in the squarefree basis of $\kly$ in terms of monomials $u_I$ where $I$ is a finite interval in $\bZ$.
As we demonstrate in \S\ref{sec:interval}, this has interesting consequences for principal specializations of the sort considered here.

\medskip

\noindent\textbf{Outline of the article:}
We briefly introduce relevant notation in \S\ref{sec:notation}.
The core of this work consists of \S\ref{sec:algebra} and \S\ref{sec:qDS}. Our central object \textemdash{} the Klyachko algebra $\kly$ \textemdash{} is introduced in \S\ref{sec:algebra}. We show that squarefree monomials form a basis of $\kly$, as well as the important quotients $\kly_+$ and $\kly_n$, and  introduce the principal specialization $\spec$. The operation of $q$-divided symmetrization (qDS) is introduced in \S\ref{sec:qDS}. After some preliminary results, we introduce remixed Eulerian numbers $A_c^q$ in \S\S\ref{sub:remixed_Eulerian}, and Theorem~\ref{th:main_2} is established in \S\S\ref{sub:qDS and K}.

The next three sections give different illustrations of these concepts. In \S\ref{sec:remixed} we shed further light on the remixed Eulerian numbers by giving a probabilistic interpretation for $q>0$ as well as establishing the non-negativity of their coefficients as polynomials in $q$. \S\ref{sec:quasisymmetric} contains the interplay with quasisymmetric functions and the proof of Theorem~\ref{th:main_3}. Our motivating result Theorem~\ref{th:main_4} is established at the beginning of \S\ref{sec:qKM}. We then explain the geometric significance of the $a_w(q)$ when $q$ is a prime power, and proceed to establish several of their properties.

\S\ref{sec:interval} introduces the so-called interval property, which is a satisfying unifying explanation to several identities of this work. Finally \S\ref{sec:volumes} ties several qDS evaluations from this work, by introducing a $q$-deformation of the volume polynomial of the permutahedron.

\section{Notation}
\label{sec:notation}
For any undefined terminology relating to various combinatorial objects arising in this work, we refer the reader to \cite{Macdonald,St99}.

We denote the set of nonnegative integers by $\bN$, and the set of positive integers by $\bZ_+$.
Given integers $a\leq b$, on occasion we denote the interval $\{a,a+1,\dots,b\}$ by $[a,b]$. If $a=1$, instead of writing $[1,b]$ we occasionally write $[b]$. We interpret $[1,b]+k$ to mean the interval $[1+k,b+k]$.

We will need $q$-integers. For $n\in \bN$ and $0\leq k\leq n$, set
\begin{align}\label{eq:cq-defs}
\qint{n}\coloneqq \frac{q^n-1}{q-1}=1+q+\cdots+q^{n-1};\quad
\qfact{n}\coloneqq \prod_{1\leq i\leq n} \qint{i};\quad \qbin{n}{k}\coloneqq\frac{\qfact{n}}{\qfact{k}\qfact{n-k}}.
\end{align}
These are respectively called the $q$-integers, $q$-factorials and $q$-binomial coefficients.
These are all polynomials in $q$ with nonnegative integer coefficients. As a consequence, if $q$ is an element of any $\bZ$-algebra, then $\qint{n}, \qfact{n}$ and $\qbin{n}{k}$ are well-defined elements therein.

For an indeterminate $t$ and $n$ a nonnegative integer, define the {\em $q$-Pochhammer symbol}
\begin{align}
  (t;q)_n\coloneqq\prod_{1\leq i\leq n}(1-tq^{i-1}).
\end{align}
Finally, we record Cauchy's $q$-binomial theorems that state
\begin{align}\label{eq:cauchy_binomial_theorem}
  (t;q)_n=\sum_{0\leq n\leq n}(-1)q^{\binom{j}{2}}t^j\qbin{n}{j},\\
  \label{eq:cauchy_2}
  \frac{1}{(t;q)_n}=\sum_{j\geq 0}\qbin{n+j-1}{j}t^j.
\end{align}

\subsection{$\bN$-vectors and compositions}
We call a sequence $c=(c_i)_{i\in\bZ}\in \bN^{\bZ}$ an \emph{$\bN$-vector} if its \emph{support} is finite. Recall that the support of $c$, denoted by $\supp(c)$, is the set of indices $i\in \bZ$ such that $c_i>0$. Let $\wc{}{}$ be the set of $\bN$-vectors.
For $c\in \wc{}{}$, define its \emph{weight} $|c|$ to equal $\sum_ic_i$, and let $\wc{}{k}$ be the set of $\bN$-vectors with $|c|=k$.
We also define $e(c)=|c|-|\supp(c)|$. Note that $e(c)$ is nonnegative, and is equal to $0$ precisely when $c_i\in\{0,1\}$ for all $i$. Such vectors are naturally identified with finite subsets of $\bZ$ via their support.

We will also refer to finite sequences $(c_1,\dots,c_N)$ of nonnegative integers as $\bN$-vectors, or simply vectors. They are naturally embedded in $\wc{}{}$ by setting $c_i=0$ for all $i\in \bZ\setminus [N]$. We write $\wc{N}{}$ for these finite sequences, and $\wc{N}{k}$ for those with $|c|=k$. Let $\rev{c}$ denote the reverse of the vector $c$: if $c=(c_1,\dots,c_N)$, then $\rev{c}$ is by definition $(c_N,\dots,c_1)$.
Given an $\bN$-vector $c=(c_1,\dots, c_n)$, we define $N(c)=\sum_{1\leq i\leq n}(i-1)c_i$.

A \emph{composition} $\alpha$ of $r$ is an $\bN$-vector $(\alpha_1,\dots,\alpha_N)$ with positive entries such that $\alpha_1+\alpha_2+\dots+\alpha_N=r$. This is denoted $\alpha\vDash r$. Let $\set(\alpha)$ denote the subset of $[r-1]$ associated to $\alpha$ under the folklore one-to-one correspondence $\alpha\mapsto \{\alpha_1,\alpha_1+\alpha_2,\dots,\alpha_1+\alpha_2+\dots+\alpha_{N-1}\}$.

\subsection{Symmetric and quasisymmetric polynomials/functions}
Throughout, let $\alpx=\{x_i\}_{i\in \bZ}$, $\alpx_+=\{x_i\}_{i\in \bZ_{>0}}$ and $\alpx_n=\{x_i\suchthat i\in[n]\}$ for $n$ a positive integer.

A polynomial $f\in \bQ[\alpx_n]$ is called \emph{quasisymmetric} if for any strong composition $(\alpha_1,\dots,\alpha_k)$ the coefficient of $x_{i_1}^{\alpha_1}\cdots x_{i_k}^{\alpha_k}$ equals that of $x_{1}^{\alpha_1}\cdots x_{k}^{\alpha_k}$ whenever $1\leq i_1<\cdots<i_k\leq n$.
We denote the ring of quasisymmetric polynomials in $x_1,\dots,x_n$ by $\Qsym_n$. 
A  linear basis for the degree $d$ homogeneous component of $\Qsym_n$ is given by the \emph{fundamental quasisymmetric polynomials} $F_{\alpha}(x_1,\dots,x_n)$ indexed by compositions $\alpha\vDash d$. To define this recall the  correspondence sending $\alpha\vDash d$ to $\set(\alpha)\subseteq [d-1]$ from the last section.
We have
\begin{equation}
\label{eq:fund_quasi_def}
  F_{\alpha}(x_1,\dots,x_n)=\sum_{\substack{1\leq i_1\leq \cdots \leq i_d\leq  n\\ i_j<i_{j+1} \text{ if } j\in \set(\alpha)}}x_{i_1}\cdots x_{i_d}.
\end{equation}
We note here that $F_{\alpha}(x_1,\dots,x_n)=0$ if $\ell(\alpha)>n$.

Let $\Qsym$ denote the ring of quasisymmetric functions.
Elements of $\Qsym$ can be defined as bounded-degree formal power series $f\in \bQ[\![{\alpx_+}]\!]$ such that for any strong composition $(\alpha_1,\dots,\alpha_k)$ the coefficient of $x_{i_1}^{\alpha_1}\cdots x_{i_k}^{\alpha_k}$ equals that of $x_{1}^{\alpha_1}\cdots x_{k}^{\alpha_k}$ for positive integers $i_1<\cdots<i_k$. The fundamental quasisymmetric functions $F_\alpha$ for $\alpha$ a strong composition are then naturally defined by removing the condition $i_d\leq n$ in the sum~\eqref{eq:fund_quasi_def}.

\subsection{Reduced words of permutations, and Schubert polynomials}
We denote the set of permutation on $n$ letters by $\sgrp_n$. An \emph{inversion} in $w\in \sgrp_n$ is an ordered pair $(i,j)$ such that $i<j$ and $w_i>w_j$. The number of inversions will be denoted by $\inv(w)$. This quantity is also otherwise known as the \emph{length} of $w$ and denoted by $\ell(w)$.
Of particular importance is the subset $\sgrpp_n$ of $\sgrp_n$ comprising permutations of length $n-1$.
 A \emph{descent} in $w\in \sgrp_n$ is an index $i\in [n-1]$ such that $w_{i}>w_{i+1}$. The set of descents in $w$ will be denoted by $\Dsc(w)$. The sum of entries in $\Dsc(w)$ is referred to as the \emph{major index} of $w$ and denoted by $\maj(w)$.

A \emph{reduced expression} of a permutation $w$ is any minimal length factorization $w=s_{i_1}\dots s_{i_{\ell(w)}}$ where each $s_{i_j}$ is a simple transposition. 
We refer to the word $i_1\dots i_{\ell(w)}$ as a \emph{reduced word}. 
The set of reduced words for $w$ is denoted by $\reduced(w)$.
Given ${\bf a} \in \reduced(w)$, its \emph{content} $\cont({\bf a})$ is the $\bN$-vector whose $j$th entry counts occurrences of $j$ in ${\bf a}$ for all $j$. We define the statistic $
\comaj({\bf a})$ to be the sum of all $j$ such that $a_j<a_{j+1}$. For instance given the reduced word ${\bf a}=7534232$ for $w=16423587\in \sgrp_8$, we have $\cont({\bf a})=(0,2,2,1,1,0,1)$ and $\comaj({\bf a})=3+5=8$.

\subsection{Schubert polynomials}
\label{sub:schubert}
Let $\sym_n\subseteq \qsym_n\subseteq \bQ[\alpx_n]$ be the subring of symmetric polynomials in $x_1,\ldots,x_n$, and $\sym_n^+$ be the ideal of $\bQ[\alpx_n]$ generated by  elements $f\in \sym_n$ such that $f(0)=0$. 
The quotient ring $R_n=\bQ[\alpx_n]/\sym_n^+$ is the \emph{coinvariant ring}.

 Let $\partial_i$ be the divided difference operator on $\bQ[\alpx_n]$, given by
\begin{equation}
\partial_i(f)=\frac{f-s_i\cdot f}{x_i-x_{i+1}}.
\end{equation}

Define the \emph{Schubert polynomials} for $w\in \sgrp_n$ as follows: $\schub{w_o}=x_1^{n-1}x_2^{n-2}\cdots x_{n-1}$, while if $i$ is a descent of $w$, let $\schub{ws_i}=\partial_i\schub{w}$. Here $w_o$ is the longest word in $\sgrp_n$.
For $w\in\sgrp_n$, the Schubert polynomial $\schub{w}$ is a homogeneous polynomial of degree $\ell(w)$ in $\bQ[\alpx_n]$.
In fact Schubert polynomials are well defined for $w\in\sgrp_\infty$. Moreover, when $w\in\sgrp_\infty$ runs through all permutations whose largest descent is at most $n$, the  Schubert polynomials $\schub{w}$  form a basis for  $\bQ[\alpx_n]$. \smallskip

\section{The Klyachko algebra $\kly$}
\label{sec:algebra}

Let $\mathbf{k}$ be a field of characteristic zero, and  $q\in \mathbf{k}$. {\em We assume that $q$ is not a nontrivial root of unity in $\mathbf{k}$.} In particular $q$ can be $1$ but necessarily $q\neq -1$. This condition ensures that $\qint{n}\neq 0$ for $n\geq 1$, so that we can safely divide by $q$-integers and $q$-factorials.

We will mostly be interested in the `generic' case  where $q$ is an indeterminate and $\mathbf{k}=\bQ(q)$ or $\bC(q)$ say. However important applications occur when:\begin{itemize}
\item $\mathbf{k}=\bR$ and $q>0$, cf. \S\ref{sec:remixed}.
\item $q$ is a prime power, cf. \S\ref{sec:qKM};
\end{itemize}
The case $q=1$ is the starting point of this work, since certain results presented here will give new proofs of certain results of the authors in \cite{DS, NT20}.

\subsection{ The algebra $\kly$}
The central object of our study is the following $\mathbf{k}$-algebra depending on the parameter $q$.

\begin{definition}[Klyachko algebra $\kly$]
Let $\kly=\kly^q$ be the commutative algebra over $\mathbf{k}$ on the generators $\{u_i\suchthat i\in \mathbb{Z}\}$ subject to the following relations for all $i\in\bZ$:
\begin{align}
\label{eq:defining_q_klyachko}
(q+1)u_i^2=qu_iu_{i-1}+u_iu_{i+1}.
\end{align}
\end{definition}

It is a \emph{quadratic algebra}, i.e. its defining relations have degree two. Let $\alpz=(z_i)_{i\in\bZ}$ be indeterminates; we will also need $\alpz_{+}=(z_i)_{i\in\bZ_+}$ and $\alpz_{r}=(z_i)_{i=1,\ldots,r}$ for $r>0$. We let
\[
\proj:\mathbf{k}[\alpz]\to \kly, z_i\mapsto u_i
\]
be the defining projection.
If we let $\ideal$ be the ideal of $\mathbf{k}[\alpz]$ generated by the elements $(q+1)z_i^2-qz_iz_{i-1}-z_iz_{i+1}$ for $i\in\bZ$, then $\kly$ is by definition isomorphic to $\mathbf{k}[\alpz]/\ideal$. 

\medskip

{\noindent \em Shift and homothecies.} Consider the \emph{shift} $\tau:i\mapsto i+1$ on $\bZ$. This induces an automorphism on $\mathbf{k}[\alpz]$ via $z_i\mapsto z_{i+1}$, which we continue to denote by $\tau$. The relations of $\kly$ are invariant under $\tau$, so that $u_i\mapsto  u_{i+1}$ defines an automorphism of $\kly$ still denoted by $\tau$ and named {shift}.

We also note that for any $\lambda\in\mathbf{k}^*$, $u_i\mapsto \lambda u_i$ extend to an automorphism of $\kly$ since the relations \eqref{eq:defining_q_klyachko} are homogeneous.

\begin{remark}
\label{rem:kly_symmetry}
There is a more homogeneous version of the relations of $\kly$, namely:
\[(q_1+q_2)u_i^2=q_1u_iu_{i-1}+q_2u_iu_{i+1}.
\]
for some $q_1,q_2\in \mathbf{k}$. If $q_2\neq 0$ this becomes the relation for $\kly^q$ with $q=q_1/q_2$ so there is no loss of generality in keeping just one parameter, as we opted to do in our definition.

 The homogeneous version has the advantage of exhibiting more symmetry. In particular the relation is invariant under $u_j\mapsto u_{-j}$ and $q_1\leftrightarrow q_2$, which shows that $\kly^q\simeq \kly^{1/q}$ for nonzero $q$.
\end{remark}

 {\noindent \em Quotients.} Given any subset $I$ of $\mathbb{Z}$, we let $\kly_{I}$ be the algebra generated by $u_i\in I$ subject to the relations \eqref{eq:defining_q_klyachko} with $u_i=0$ for $i\notin I$. That is, $\kly_{I}$ is the quotient of $\kly$ by the ideal generated by the $u_i$ for $i\notin I$. The most important quotients are for $I$ semi-infinite and for $I$ an interval:

\begin{definition}
Let $\kly_+\coloneqq \kly_{\bP}$ and $\kly_n\coloneqq \kly_{\{1,\ldots,n-1\}}$.
\end{definition}

We define $\proj_+:\mathbf{k}[\alpz_+]\to \kly_+$ to be the natural projection sending $z_i$ to $u_i$, whose kernel $\ideal_+$ is generated by the elements $(q+1)z_i^2-qz_iz_{i-1}-z_iz_{i+1}$ for $i>1$ together with  $(q+1)z_1^2-z_1z_2$.
 Analogously, define $\proj_n:\mathbf{k}[\alpz_{n-1}]\to \kly_n$ with kernel $\ideal_n$ generated by the elements $(q+1)z_1^2-z_1z_2$, $(q+1)z_{n-1}^2-qz_{n-1}z_{n-2}$ and $(q+1)z_i^2-qz_iz_{i-1}-z_iz_{i+1}$ for $1<i<n-1$.\smallskip

{\em Note that we have kept the notation $u_i$ for the generators $u_i$ in different algebras $\kly_I$, in order to avoid too much notation. When a computation holds in a quotient but not in the full algebra $\kly$, we will always make sure to specify it clearly.}
\smallskip

\begin{remark} The presentation of $\kly_n$ for $q=1$ coincides with $H^* (\Perm_n,\bQ)^{\sgrp_n}$ as proved in~\cite{Kly85, Kly95} and used by the authors in \cite{NT20}.
\end{remark}

\subsection{Basic identities.}
\label{sub:basic_identities}

\begin{definition}
For any finite subset $I\subset \bZ$, define $u_I\coloneqq \prod_{i\in I} u_i\in \kly$.
\end{definition}

Given the defining relations of $\kly$, it is natural to expect that these squarefree monomials form a linear basis of $\kly$, which is the content of Proposition~\ref{prop:uI_basis}. Toward establishing this we prove some basic identities that hold in $\kly$. The next lemma lays the groundwork for many of the computations in this article.

\begin{lemma}\label{lem:mult_by_ui}
Consider integers $a\leq b$ and  $i\in[a,b]$. In $\kly$, we have
\begin{align}
\label{eq:mult_by_ui}
\qint{b-a+2}u_iu_{[a,b]}&= q^{i-a+1}\qint{b-i+1}u_{[a-1,b]}+\qint{i-a+1}u_{[a,b+1]}
\end{align}
Moreover, \eqref{eq:mult_by_ui} is a consequence of the relations~\eqref{eq:defining_q_klyachko} for $i=a,\ldots,b$.
\end{lemma}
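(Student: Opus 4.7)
The plan is to reduce \eqref{eq:mult_by_ui} to a uniform three-term linear recurrence and then solve it, exploiting the fact that $\qint{m}$ is invertible in $\mathbf{k}$ for every $m\geq 1$.

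For each $j\in[a,b]$ set $v_j\coloneqq u_ju_{[a,b]}$, and adopt the convention $v_{a-1}\coloneqq u_{[a-1,b]}$, $v_{b+1}\coloneqq u_{[a,b+1]}$. First I would multiply the defining relation $(q+1)u_i^2=qu_iu_{i-1}+u_iu_{i+1}$ by $u_{[a,b]\setminus\{i\}}$ in $\kly$ and carefully interpret each term. When $i>a$ the factor $u_{i-1}$ is already present in $u_{[a,b]\setminus\{i\}}$, so the first term on the right becomes $q u_{i-1}u_{[a,b]}=qv_{i-1}$; when $i=a$ it becomes $qu_{[a-1,b]}=qv_{a-1}$, consistent with the convention. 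A symmetric analysis for $u_{i+1}$ yields in every case the recurrence
\[(q+1)v_i=qv_{i-1}+v_{i+1}\qquad (i=a,\ldots,b).\]
Since the recurrence at index $i$ uses only~\eqref{eq:defining_q_klyachko} at that same index $i$, the ``moreover'' clause is automatic from this step.

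Next I would solve the recurrence. Its characteristic polynomial $x^2-(q+1)x+q=(x-1)(x-q)$ has roots $1$ and $q$, and a short induction on $j$ (with bases $j=a,a+1$, using $(q+1)\qint{m+1}-q\qint{m}=\qint{m+2}$) yields the closed form
\[v_j\;=\;\qint{j-a+1}\,v_a\;-\;q\,\qint{j-a}\,v_{a-1},\qquad a\leq j\leq b+1.\]
Specializing at $j=b+1$ gives $v_{b+1}=\qint{b-a+2}v_a-q\qint{b-a+1}v_{a-1}$, which I can invert to solve for $v_a$ since $\qint{b-a+2}\ne 0$. Substituting back into the formula for $v_i$ and collecting terms produces
\[\qint{b-a+2}\,v_i\;=\;\qint{i-a+1}\,v_{b+1}+\bigl(q\qint{i-a+1}\qint{b-a+1}-q\qint{b-a+2}\qint{i-a}\bigr)v_{a-1}.\]
The coefficient of $v_{a-1}$ simplifies to $q^{i-a+1}\qint{b-i+1}$ by the elementary $q$-identity $\qint{p+1}\qint{p+s+1}-\qint{p+s+2}\qint{p}=q^p\qint{s+1}$ (with $p=i-a$, $s=b-i$), which is a one-line consequence of the Pascal-type rule $\qint{m+n}=\qint{m}+q^m\qint{n}$. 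Recalling the definitions of $v_i$, $v_{a-1}$, $v_{b+1}$ gives exactly~\eqref{eq:mult_by_ui}.

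The only mildly delicate point is bookkeeping at the endpoints $i=a$ and $i=b$ in the first step: one must check that the naturally occurring expressions $u_au_{a-1}u_{[a+1,b]}$ and $u_bu_{b+1}u_{[a,b-1]}$ match the definitions of $v_{a-1}$ and $v_{b+1}$, so that the recurrence is truly uniform on all of $[a,b]$. Once this is in place, the rest of the argument is a direct solution of a constant-coefficient linear recurrence, and the final simplification is a routine manipulation of $q$-integers.
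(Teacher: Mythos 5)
Your proof is correct and arrives at the same three-term recurrence $(q+1)v_i = qv_{i-1}+v_{i+1}$ for $i=a,\dots,b$ (with the convention that the boundary cases $i=a,b$ land on $v_{a-1}=u_{[a-1,b]}$ and $v_{b+1}=u_{[a,b+1]}$, which you correctly flag as the step needing bookkeeping). The paper then packages this recurrence as a linear system $MF=C$ with $M$ the $(b-a+1)\times(b-a+1)$ tridiagonal Toeplitz matrix with diagonal $1+q$, superdiagonal $-1$, subdiagonal $-q$, computes $\det M=\qint{b-a+2}$, exhibits $M^{-1}$ explicitly, and reads off every row at once. You instead solve the recurrence by its characteristic polynomial $(x-1)(x-q)$, obtain the closed form $v_j=\qint{j-a+1}v_a-q\qint{j-a}v_{a-1}$ by a short induction, impose the boundary condition at $j=b+1$ to eliminate $v_a$, and finish with the $q$-integer identity $\qint{p+1}\qint{p+s+1}-\qint{p+s+2}\qint{p}=q^p\qint{s+1}$. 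The two routes are of course solving the same system, but yours is more elementary and self-contained in that it never requires writing down or verifying the inverse of a tridiagonal matrix, at the cost of a slightly longer chain of $q$-integer manipulations at the end; the paper's matrix formulation makes the determinant $\qint{b-a+2}$ (and hence the invertibility hypothesis on $q$) more visibly central, and its explicit $M^{-1}$ yields all entries simultaneously. Both proofs make the ``moreover'' clause transparent, since only the relations at indices $a,\dots,b$ are used to generate the system.
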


The second claim shows a \emph{local} property of the rewriting, which is useful in order to compare computation in various quotients $\kly_I$.

\begin{proof}
Let $L=u_{[a-1,b]}$ and $R=u_{[a,b+1]}$, whih are the two elements of $\kly$ occurring on the right hand side of~\eqref{eq:mult_by_ui}.
Then the relations~\eqref{eq:defining_q_klyachko} gives that the elements $f_i\coloneqq u_iu_{\{a,\dots,b\}}$ satisfy 
\begin{align*}
(1+q)f_i&=qf_{i-1}+f_{i+1}\text{ for }a<i<b;\\
 (1+q)f_a&=qL+f_{a+1};\\
  (1+q)f_b&=qf_{b-1}+R.
 \end{align*}
 Let $n=b-a+1$. Let $F$ be the $n\times 1$ vector with entries $f_i$; $M$ the $n\times n$ tridiagonal Toeplitz matrix with diagonal $1+q$, superdiagonal $-1$ and subdiagonal $-q$; and $C$ the $n\times 1$ vector with $C_1=qL, C_n=R$ and all other entries zero. Then the above system of equations can be succinctly  written $MF=C$. For $a=1,b=4$ we get

 \[\underbrace{\begin{pmatrix}
 1+q  & -1 & 0 & 0
 \\ -q & 1+q & -1  & 0 \\ 0 & -q & 1+q & -1  \\ 0 & 0 & -q & 1+q
 \end{pmatrix}
 }_{M}
 \underbrace{\begin{pmatrix}
 u_1u_{[1,4]} \\ u_2u_{[1,4]} \\ u_3u_{[1,4]} \\ u_4u_{[1,4]}
 \end{pmatrix}}_{F}
 =
 \underbrace{\begin{pmatrix}
 qu_{[0,4]} \\ 0 \\ 0 \\ u_{[1,5]}
 \end{pmatrix}}_{C}
 \]

 The determinant of $M$ can be computed to be $\qint{b-a+2}$, which is nonzero by our assumption on $q$. The inverse $M^{-1}$ is easily checked to be given by
 \begin{align*}
(M^{-1})_{ij}=\frac{1}{\qint{b-a+2}}\times\begin{cases} q^{j-i}\qint{n+1-i}\qint{j}\text{ for }i\geq j;\\
\qint{n+1-j}\qint{i}\text{ for }i<j.
\end{cases}
 \end{align*}
Then the rows of the identity $F=M^{-1}C$ give the relations \eqref{eq:mult_by_ui}. The second claim in Lemma~\ref{lem:mult_by_ui} follows immediately from the previous proof.
\end{proof}

We note a corollary which follows from elementary computations:

\begin{corollary}
\label{cor:mult_by_xi}
Let $i\in\bZ$ and assume the interval $[a,b]$ contains $i-1$ or $i$; that is $[a,b]\cap\{i-1,i\}\neq \emptyset$. Then we have
\begin{equation}
\qint{b-a+2}(u_i-u_{i-1})u_{[a,b]}= q^{i-a}\left(u_{[a,b+1]}-u_{[a-1,b]}\right).
\end{equation}
\end{corollary}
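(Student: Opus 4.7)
The plan is to derive the corollary directly from Lemma~\ref{lem:mult_by_ui} by a short case analysis on which of $i-1$, $i$ lies in $[a,b]$, since the hypothesis $[a,b]\cap\{i-1,i\}\neq\emptyset$ leaves three possibilities: (a) both $i-1$ and $i$ lie in $[a,b]$ (equivalently $a+1\leq i\leq b$); (b) only $i\in[a,b]$, which forces $i=a$ and $i-1=a-1\notin[a,b]$; (c) only $i-1\in[a,b]$, which forces $i-1=b$ and $i=b+1\notin[a,b]$.

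In case (a), I would apply Lemma~\ref{lem:mult_by_ui} twice, once with index $i$ and once with index $i-1$, both valid since both indices lie in $[a,b]$, and subtract. On the coefficient of $u_{[a,b+1]}$ this produces $\qint{i-a+1}-\qint{i-a}=q^{i-a}$, and on the coefficient of $u_{[a-1,b]}$ it produces $q^{i-a+1}\qint{b-i+1}-q^{i-a}\qint{b-i+2}=q^{i-a}\left(q\qint{b-i+1}-\qint{b-i+2}\right)=-q^{i-a}$, using the elementary $q$-integer identities $\qint{k+1}-\qint{k}=q^k$ and $q\qint{k}-\qint{k+1}=-1$. The desired formula follows immediately.

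For cases (b) and (c), one of the two products is not covered by Lemma~\ref{lem:mult_by_ui}, but in each case the missing index is a neighbor of the interval and the corresponding product telescopes trivially: $u_{a-1}u_{[a,b]}=u_{[a-1,b]}$ in case (b), and $u_{b+1}u_{[a,b]}=u_{[a,b+1]}$ in case (c). Applying Lemma~\ref{lem:mult_by_ui} to the other factor and simplifying with the same $q$-integer identities gives exactly $q^{i-a}\left(u_{[a,b+1]}-u_{[a-1,b]}\right)$ on the right-hand side (noting $i-a=0$ in case (b) and $i-a=b-a+1$ in case (c)).

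There is no real obstacle here; the entire argument is essentially bookkeeping with the two identities $\qint{k+1}-\qint{k}=q^k$ and $\qint{k+1}-q\qint{k}=1$. If one wished, cases (b) and (c) could be absorbed into case (a) by declaring $u_j u_{[a,b]}$ with $j\in\{a-1,b+1\}$ to be computed by the "formal" output of Lemma~\ref{lem:mult_by_ui}, but writing them out separately is cleaner and shorter.
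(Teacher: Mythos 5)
Your proof is correct, and it is essentially the intended argument: the paper states only that the corollary ``follows from elementary computations,'' and a two-fold application of Lemma~\ref{lem:mult_by_ui} (with the telescoping simplifications $\qint{k+1}-\qint{k}=q^k$ and $\qint{k+1}-q\qint{k}=1$) is the natural way to carry those out. The case split is handled cleanly, and your observation that (b) and (c) can be absorbed into (a) by reading Lemma~\ref{lem:mult_by_ui} formally at the endpoints (using $\qint{0}=0$) is a tidy way to see that no boundary case is being swept under the rug.
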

Now we give an important relation allowing us to expand any element as a linear combination of the elements $u_I$.
\smallskip

Let $c$ be an $\bN$-vector and let $J=\supp(c)$.
Assume there exists an index $i$ such that $c_i\geq 2$, and fix such an $i$. Let $I=[a,b]$ be the \emph{maximal} interval in $J$ containing $i$, so that $a-1,b+1\notin J$ and thus
$c_{a-1}=c_{b+1}=0$.
Let $L_i(c)$ (resp. $R_i(c)$) be the $\bN$-vector $c'$ defined by $c'_k=c_k$ if $k\neq a-1,i$ (resp. $k\neq i,b+1$), $c'_i=c_i-1$, and $c'_{a-1}=1$ (resp. $c'_{b+1}=1$). In words, these are obtained from $c$ by decrementing $c_i$ by $1$ and incrementing $c_{a-1}$ (resp. $c_{b+1}$) by $1$.

For an $\bN$-vector $c$, let \[
u^c=\prod_{i\in \bZ}u_i^{c_i}.
\]

 \begin{corollary} With $c,i,a,b$ as above and assuming $c_i\geq 2$, we have in $\kly$
\begin{equation}
\label{eq:recurrence_uc}
\qint{b-a+2}u^c=q^{i-a+1}\qint{b-i+1}u^{L_i(c)}+\qint{i-a+1}u^{R_i(c)}.
\end{equation}
\end{corollary}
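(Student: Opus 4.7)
The plan is to reduce the claimed identity to Lemma~\ref{lem:mult_by_ui} by factoring the monomial $u^c$ appropriately. The key observation is that, because $[a,b]\subseteq \supp(c)$ (so $c_j\geq 1$ for all $j\in[a,b]$) and $c_i\geq 2$, the exponent vector $c$ dominates $\mathbf{1}_{[a,b]}+e_i$, where $\mathbf{1}_{[a,b]}$ is the indicator of $[a,b]$ and $e_i$ is the standard basis vector at position $i$. This lets us write
\[
u^c \;=\; u_i\cdot u_{[a,b]}\cdot u^{c'},\qquad c' \coloneqq c-\mathbf{1}_{[a,b]}-e_i,
\]
where $c'$ is still an $\bN$-vector.

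Next I would apply Lemma~\ref{lem:mult_by_ui} to the factor $u_i u_{[a,b]}$, giving
\[
\qint{b-a+2}\,u_iu_{[a,b]} \;=\; q^{i-a+1}\qint{b-i+1}\,u_{[a-1,b]} \;+\; \qint{i-a+1}\,u_{[a,b+1]},
\]
and multiply both sides by $u^{c'}$ (permissible as $\kly$ is commutative). What remains is to identify the two resulting products with $u^{L_i(c)}$ and $u^{R_i(c)}$.

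For the first product $u_{[a-1,b]}u^{c'}$, we add the indicator of $[a-1,b]$ to $c'$. Using the maximality of $[a,b]$, which gives $c_{a-1}=0$, the exponent at $a-1$ becomes $0+1=1$; at $i$ it becomes $(c_i-2)+1=c_i-1$; and at other $j\in[a,b]$ it is $(c_j-1)+1=c_j$; all other exponents are unchanged. This is precisely $L_i(c)$. An entirely parallel computation for $u_{[a,b+1]}u^{c'}$, invoking $c_{b+1}=0$, yields $u^{R_i(c)}$.

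There is essentially no obstacle here: the whole point is bookkeeping, and the one nontrivial ingredient, namely the algebraic identity~\eqref{eq:mult_by_ui}, has already been proved. The only care needed is to check that the maximality hypothesis on $[a,b]$ is exactly what makes the additions $c_{a-1}\mapsto 1$ and $c_{b+1}\mapsto 1$ correct (rather than $c_{a-1}+1$ and $c_{b+1}+1$), which is where the structure of $L_i(c)$ and $R_i(c)$ is used.
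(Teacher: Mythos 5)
Your proposal is correct and is essentially the paper's own argument, just spelled out in more detail: the paper also factors out $u_i u_{[a,b]}$ from $u^c$ (implicitly, via the observation that all generators $u_j$ for $j\in[a,b]$ occur in $u^c$ and $c_i\geq 2$), applies Lemma~\ref{lem:mult_by_ui}, and uses the maximality of $[a,b]$ (i.e.\ $c_{a-1}=c_{b+1}=0$) to identify the two resulting monomials with $u^{L_i(c)}$ and $u^{R_i(c)}$.
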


\begin{proof}
By definition the generators $u_j$ for $j=a,\ldots,b$ occur in $u^c$, and since $c_i\geq 2$ we can apply \eqref{eq:mult_by_ui}. Since $a-1,b+1\notin J$ we have the desired result.
\end{proof}

\subsection{The squarefree basis}
\label{sub:squarefree_basis}

\begin{proposition}
\label{prop:uI_basis}
Let $J\subset \bZ$. The $u_I$ for $I$ finite subset of $J$ form a $\mathbf{k}$-basis of the algebra $\kly_J$.
\end{proposition}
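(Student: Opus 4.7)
The plan is to prove the two halves of the claim separately: first, that the family $\{u_I\}$ spans $\kly_J$, and second, that it is linearly independent.

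Spanning follows by induction on the statistic $e(c) = |c| - |\supp(c)|$. If $e(c) = 0$, then $c$ is already the indicator of its support $I = \supp(c) \subseteq J$, so $u^c = u_I$. Otherwise, pick some index $i$ with $c_i \geq 2$ and apply the recurrence \eqref{eq:recurrence_uc}, which is available in $\kly_J$ because $\qint{b-a+2}$ is invertible under our assumption on $q$. This writes $u^c$ as a $\mathbf{k}$-linear combination of $u^{L_i(c)}$ and $u^{R_i(c)}$, both of which satisfy $e(L_i(c)) = e(R_i(c)) = e(c) - 1$; any term whose exponent vector escapes $J$ vanishes in $\kly_J$ since $u_k = 0$ there for $k \notin J$, so the induction closes cleanly inside $\kly_J$.

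For linear independence, I would construct an explicit faithful representation. Let $V_J$ be the free $\mathbf{k}$-module on basis $\{E_I : I \subseteq J \text{ finite}\}$. For each $i \in J$, define an endomorphism $U_i$ of $V_J$ by
\[
U_i \cdot E_I := E_{I \cup \{i\}} \quad \text{if } i \notin I,
\]
and otherwise, letting $[a,b]$ be the maximal interval of $I$ containing $i$,
\[
U_i \cdot E_I := \frac{q^{i-a+1}\qint{b-i+1}}{\qint{b-a+2}}\, E_{I \cup \{a-1\}} + \frac{\qint{i-a+1}}{\qint{b-a+2}}\, E_{I \cup \{b+1\}},
\]
with the convention $E_{I'} := 0$ whenever $I' \not\subseteq J$. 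Provided the $U_i$ pairwise commute and satisfy $(q+1) U_i^2 = q U_i U_{i-1} + U_i U_{i+1}$ as endomorphisms of $V_J$, the universal property of $\kly_J$ furnishes an algebra map $\kly_J \to \operatorname{End}(V_J)$ sending $u_i \mapsto U_i$. Since iterating the first rule gives $u_I \cdot E_\emptyset = E_I$, the obvious linear independence of the $E_I$ in $V_J$ forces that of the $u_I$ in $\kly_J$.

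The main obstacle is thus the verification of the two properties of the $U_i$. Both reduce to case analyses according to the positions of $i$ (and $j$, for commutativity) relative to the maximal intervals of $I$. Commutativity is immediate for $|i-j| \geq 2$, and the genuinely nontrivial case is $|i-j|=1$ when $i$ or $j$ already lies in $I$; the Klyachko relation is most delicate when $i \in I$ and at least one of $i \pm 1$ lies in $I$ too. In every instance the identity to check is among $q$-integers, ultimately a consequence of the explicit inverse of the tridiagonal matrix $M$ computed in the proof of Lemma \ref{lem:mult_by_ui}. An equivalent alternative is to treat \eqref{eq:recurrence_uc} as a rewriting rule on the presenting polynomial ring and invoke a Diamond Lemma, whereupon confluence on the critical monomials $u_i^3$ and $u_i^2 u_{i\pm 1}^2$ reduces to the same $q$-integer identities.
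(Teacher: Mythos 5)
Your spanning argument coincides with the paper's. For linear independence you take a genuinely different route, and it is worth spelling out the contrast. The paper first reduces to the finite quotients $\kly_n$ (using the locality from the second part of Lemma~\ref{lem:mult_by_ui}), then observes that the $n-1$ quadratic generators of $\ideal_n$ form a homogeneous system of parameters in $\mathbf{k}[\alpz_{n-1}]$, hence a regular sequence by Cohen--Macaulayness; this forces $\Hilb(\kly_n,t)=(1+t)^{n-1}$, so $\dim_\mathbf{k}\kly_n=2^{n-1}$ matches the number of spanning squarefree monomials and independence is automatic. You instead construct an explicit faithful module on the free $\mathbf{k}$-module over the candidate basis. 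This is more elementary (no Hilbert series or regularity theory), works directly in $\kly_J$ for arbitrary $J$ without passing through the finite quotients, and — notably — is essentially the Diaconis--Fulton approach that the paper itself acknowledges in the remark following the proof as the one they deliberately chose not to follow. In exchange, your approach shifts all the substantive work into verifying that the operators $U_i$ commute and satisfy the Klyachko relations; this is a genuine computation (for instance already $U_0U_1 E_{\{1\}}=U_1U_0E_{\{1\}}$ requires the $q$-identity $q+\qint{3}=\qint{2}^2$), and as written it is only a sketch.

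Two small cautions about the way you frame that verification. First, the relation $(q+1)U_i^2=qU_iU_{i-1}+U_iU_{i+1}$ and pairwise commutativity are not automatic from the fact that the formula for $U_i$ was reverse-engineered from Lemma~\ref{lem:mult_by_ui}; the derived rewriting rule \eqref{eq:recurrence_uc} depends on the entire maximal interval of the support, so the coefficient structure genuinely mixes under composition and the $q$-integer identities have to be checked. Second, the Diamond Lemma alternative is more delicate than you suggest: there is no monomial order on $\mathbf{k}[\alpz]$ (or even on $\mathbf{k}[\alpz_{n-1}]$ with $n\geq 3$) that makes every $u_i^2$ the leading term of its relation simultaneously, since that would require $u_{i-1}<u_i$ and $u_{i+1}<u_i$ for all $i$. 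One would have to work with the non-local derived rule \eqref{eq:recurrence_uc} as the rewriting system and argue termination and confluence for that context-dependent rule — doable, but not the off-the-shelf Bergman setup. The representation-theoretic route is therefore the safer of your two suggestions, and it does succeed, but the case analysis is a real piece of the proof that should be carried out rather than asserted.
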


We denote the basis of squarefree monomials for $\kly_I$ by $\mca{B}_{I}$. In the important cases $\kly$, $\kly_+$, and $\kly_n$, we denote the corresponding bases by $\mcbk$, $\mcbkp$, and $\mcbkn$ respectively.

\begin{proof}
From~\eqref{eq:recurrence_uc}, by induction on $e(c)=|c|-|\supp(c)|$, one can reduce any monomial in $u_i$ to a linear combination of squarefree monomials. Indeed $e(L_i(c))=e(R_i(c))=e(c)-1$ when $c_i\geq 2$, and recall that $e(c)=0$ precisely when $u^c$ is squarefree. It follows immediately that the $u_I$ span $\kly$; by passing to the quotient, the same holds for any $\kly_J$.\smallskip

We now want to show linear independence of these elements. First, it is enough to prove this for the algebras $\kly_n$ for $n\geq 1$. Indeed any relation between squarefree monomials in an algebra $\kly_J$ must hold in an algebra $\kly_{J'}$ for $J'$ a finite interval: this is a consequence of the second part of Lemma~\ref{lem:mult_by_ui}. By the shift invariance of the relations, we can assume $J'=[1,n-1]$ for a certain $n$.

 We will use standard notions of commutative algebra in the proof, for which we refer to \cite[\S I.5]{St96} for instance.
Recall that $\kly_n$ is defined as the quotient of $\mathbf{k}[\alpz_{n-1}]$ by  $\ideal_n$, an ideal generated by $n-1$ quadratic polynomials. Now we have just shown that squarefree monomials span $\kly_n$, which is therefore finite-dimensional over $\mathbf{k}$. It follows that the generators of $\ideal_n$ form a \emph{homogeneous system of parameters} for $\mathbf{k}[\alpz_{n-1}]$, as a module over itself.

 Since $\mathbf{k}[\alpz_{n-1}]$ is Cohen-Macaulay, these generators form necessarily a \emph{regular sequence}. Hence we get the following identity between Hilbert series\footnote{Recall that the Hilbert series $\Hilb(\oplus_{i\geq 0}V_i,t)$ of a graded vector space with finite-dimensional  homogeneous components is the generating series $\sum_{i\geq 0}(\dim) V_i t^i$.}:
\[\Hilb(\mathbf{k}[\alpz_{n-1}],t)=\frac{1}{(1-t^2)^{n-1}}\Hilb(\kly_n,t).\]

Since $\Hilb(\mathbf{k}[\alpz_{n-1}],t)=\frac{1}{(1-t)^{n-1}}$, we obtain $\Hilb(\kly_n,t)=(1+t)^{n-1}$. So $\kly_n$ has $\mathbf{k}$-dimension $2^{n-1}$, which is exactly the number of squarefree monomials it contains. Since we have shown that these span $\kly_n$, they necessarily form a basis.
\end{proof}

\begin{corollary}
\label{cor:stability}
 Let $P\in\mathbf{k}[\alpz_+]$, and suppose  that there exists $N>0$ such that $P(u_1,u_2,\dots)=0$ in $\kly_n$ for all $n\geq N$. Then $P(u_1,u_2,\dots)=0$ in $\kly_+$.
\end{corollary}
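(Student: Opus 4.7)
The plan is to reduce this to the linear independence statement in Proposition~\ref{prop:uI_basis}, applied successively to $\kly_+$ and to $\kly_n$ for a suitably chosen $n$.

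First I would expand $P(u_1,u_2,\ldots)$ in the squarefree basis $\mcbkp$ of $\kly_+$ as a finite sum $\sum_{I} c_I u_I$, where $I$ ranges over a finite collection of finite subsets of $\bZ_+$ (finite because $P$ is a polynomial, hence involves only finitely many generators and has only finitely many monomials, each of which reduces via the procedure of \S\ref{sub:squarefree_basis} to finitely many squarefree terms). Let $M$ be the maximum element appearing in any $I$ with $c_I\neq 0$ (take $M=0$ if the sum is empty). The goal is then to show every $c_I$ is zero.

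Next, for any $n$ with $n\geq \max(M+1,N)$, consider the canonical surjection $\pi_n:\kly_+\twoheadrightarrow \kly_n$ obtained by killing the generators $u_i$ for $i\geq n$. By the choice of $M$, every index set $I$ with $c_I\neq 0$ satisfies $I\subseteq\{1,\ldots,n-1\}$, so $\pi_n(u_I)$ is the corresponding squarefree basis element of $\kly_n$ as listed in $\mcbkn$. Hence $\pi_n\big(P(u_1,u_2,\ldots)\big)=\sum_I c_I u_I$ in $\kly_n$. By hypothesis this equals $0$ in $\kly_n$, and by Proposition~\ref{prop:uI_basis} the elements of $\mcbkn$ are linearly independent, so $c_I=0$ for all $I$. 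Therefore $P(u_1,u_2,\ldots)=0$ already in $\kly_+$.

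There is no real obstacle: the whole argument is a compatibility check between the squarefree bases of $\kly_+$ and $\kly_n$. The only point that needs a brief verification is that $\pi_n$ is well-defined, which follows because the defining relations of $\kly_+$ either lie among the defining relations of $\kly_n$ (when the indices involved are in $\{1,\ldots,n-1\}$) or become trivially $0=0$ after killing the $u_i$ with $i\geq n$.
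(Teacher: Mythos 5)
Your proof is correct, and it reaches the same conclusion via essentially the same ingredients as the paper (finite expansion in the squarefree basis, Proposition~\ref{prop:uI_basis} for linear independence in $\kly_n$), but the route is packaged a bit differently. The paper's proof tracks the \emph{reduction process} explicitly: it invokes the locality statement in the second part of Lemma~\ref{lem:mult_by_ui} to argue that reducing $P$ in $\kly_+$ only involves relations among generators $u_i$ with $i<a+d$, so the reduction ``looks the same'' in $\kly_n$ once $n$ is large enough. You instead expand first in $\kly_+$, record the bound $M$ on indices that actually appear, and then apply the quotient map $\pi_n:\kly_+\to\kly_n$; since the relevant $u_I$ survive as basis elements of $\kly_n$, the hypothesis plus linear independence kills all coefficients at once. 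This is a cleaner formulation because it sidesteps any discussion of how the reduction algorithm behaves in the two algebras -- you only need that $\mcbkp$ is a basis of $\kly_+$ and that $\pi_n$ sends $u_I$ to the corresponding basis element of $\kly_n$ when $I\subseteq\{1,\ldots,n-1\}$. One small point worth tightening: the well-definedness of $\pi_n$ is immediate from the definitions rather than something to be rechecked relation-by-relation, since $\kly_n$ is \emph{defined} in \S\ref{sec:algebra} as the quotient of $\kly$ (hence of $\kly_+$) by the ideal generated by $u_i$ for $i\notin\{1,\ldots,n-1\}$.
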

\begin{proof}
$P(u_1,u_2,\dots)$ vanishes in $\kly_+$ if all its coefficients on the basis $u_I, I\subset \bZ_+$ are zero. Now let $d$ be the degree of $P$ and $a$ be the maximal index of an indeterminate $z_i$ occurring in $P$. It follows from the second part of Lemma~\ref{lem:mult_by_ui} that, in order to reduce $P$ in the basis $u_I, I\subset \bZ_+$, one only needs relations involving generators $u_i$ with $i<a+d$. These relations occur in any $\kly_n$ for $n$ large enough, and so by hypothesis we have $P(u_1,u_2,\dots)=0$ in $\kly_+$.  \end{proof}

As we will see in \S\ref{sec:remixed}, coefficients of general monomials expressed in the basis $\mcbk$ have a probabilistic interpretation due to Diaconis and Fulton~\cite{DiaFul91}. In this seminal work, the algebras $\kly_I$ actually occur as special cases of a wide class of algebras, and Proposition~\ref{prop:uI_basis} is proved in greater generality. We included our own proof as to be self contained; our arguments are also of a different nature.

\subsection{Specializations}
\label{sub:specialization}
The following specialization will be instrumental in this work to go from computations in $\kly$ to numerical formulas.

\begin{proposition}
\label{prop:specialization}
The assignment
\[
 u_i\mapsto
  \begin{cases}
  \qint{i}&\text{ for }i> 0;\\
              0&\text{ for }i\leq 0,
\end{cases}
\]
 extends to a unique algebra morphism $\spec:\kly\to \mathbf{k}$.
\end{proposition}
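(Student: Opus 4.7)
The plan is to invoke the universal property of the polynomial algebra $\mathbf{k}[\alpz]$ and then check that the prescribed assignment descends to the quotient $\kly = \mathbf{k}[\alpz]/\ideal$. Uniqueness is automatic since the $u_i$ generate $\kly$ as an algebra, so any morphism is determined by its values on the $u_i$. For existence, I would first extend the assignment to a unique algebra morphism $\widetilde{\spec}:\mathbf{k}[\alpz]\to \mathbf{k}$ given by $z_i\mapsto \qint{i}$ for $i>0$ and $z_i\mapsto 0$ otherwise. Then it suffices to show that $\widetilde{\spec}$ vanishes on the generators $(q+1)z_i^2-qz_iz_{i-1}-z_iz_{i+1}$ of $\ideal$.

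Equivalently, setting $\epsilon(i)=\qint{i}$ for $i>0$ and $\epsilon(i)=0$ for $i\leq 0$, I need to verify
\[
(q+1)\epsilon(i)^2 = q\,\epsilon(i)\epsilon(i-1)+\epsilon(i)\epsilon(i+1) \qquad \text{for all } i\in \bZ.
\]
Split into cases. If $i\leq 0$, both sides vanish since $\epsilon(i)=0$. If $i\geq 1$, we can cancel the factor $\epsilon(i)$ and the identity reduces to $(q+1)\epsilon(i)=q\,\epsilon(i-1)+\epsilon(i+1)$. For $i=1$ this reads $(q+1)\cdot 1 = q\cdot 0 + \qint{2}$, which is clear. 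For $i\geq 2$, multiply through by $q-1$: we must check
\[
(q+1)(q^i-1)=q(q^{i-1}-1)+(q^{i+1}-1),
\]
and both sides simplify to $q^{i+1}+q^i-q-1$.

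With all relations verified, $\widetilde{\spec}$ factors through $\kly$ to yield the desired algebra morphism $\spec$. There is no genuine obstacle here; the substance of the statement is simply the observation that the three-term recursion $(q+1)\qint{i}=q\,\qint{i-1}+\qint{i+1}$ satisfied by the $q$-integers matches exactly the defining quadratic relation of $\kly$, with the boundary value $\qint{0}=0$ guaranteeing compatibility at $i=1$ where the relation degenerates.
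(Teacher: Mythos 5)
Your proof is correct and takes essentially the same approach as the paper: verify the defining quadratic relation on the claimed values of the generators (trivial for $i\leq 0$, the degenerate boundary case $i=1$ separately, and the generic case $i\geq 2$), then appeal to the fact that the $u_i$ generate $\kly$ for uniqueness. Your write-up is slightly more explicit in routing the argument through the universal property of $\mathbf{k}[\alpz]$ and in multiplying by $q-1$ to verify the three-term recursion, but the substance is identical.
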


\begin{proof}
One has $(q+1)\qint{i}^2=q\qint{i}\qint{i-1}+\qint{i}\qint{i+1}$ for $i>1$ and $(q+1)\qint{1}^2=0\times\qint{1}+\qint{1}\qint{2}$, and the relations are trivially verified for $i\leq 0$ so the map extends to an algebra morphism. It is unique since the $u_i$  generate $ \kly$ as an algebra.
\end{proof}

The specialization obviously factors through $\kly_+$, and we keep the same notation $\spec$:
\begin{align*}
\spec:\kly_+&\to \mathbf{k}\\
 u_i&\mapsto \qint{i}
\end{align*}

Now consider an interval $I=[a+1,b]$ with $a\geq 0$ so that $u_I\in\kly_+$. Then we get
\begin{equation}
\label{eq:sp_uI}
\spec\left(\frac{u_{[a+1,b]}}{\qfact{b-a}}\right)=\qbin{b}{a}.
\end{equation}
This evaluation will allow us to go from identities in $\kly_+$ to numerical formulas.

\subsubsection*{Other specializations} It is natural to consider specializations, i.e. characters of $\kly$. Can we characterize the algebra morphisms $\kly\to \mathbf{k}$? Given the presentation~\eqref{eq:defining_q_klyachko}, the problem is equivalent to characterizing all sequences $(a_i)_{i\in\bZ}$ in $\mathbf{k}$ that satisfy $(1+q)a_i^2=qa_ia_{i-1}+a_ia_{i+1}$ for all $i$. The corresponding specialization is then given by $u_i \mapsto a_i$.
\begin{itemize}
\item  A first family of specializations is given by
 \[a_i=\alpha+\beta q^i\text{ for all }i\] with $\alpha,\beta\in\mathbf{k}$ two parameters. This family contains in particular all solutions $(a_i)_{i\in\bZ}$ that do not vanish.
\item  A second family of specializations, depending on two integer parameters $m\leq M$ and two parameters $\alpha,\beta\in\mathbf{k}$, can given by
\[
\begin{cases}
a_i=0\text{ for }i\in [m,M];\\
a_{M+k}=\alpha\qint{k}\text{ for }k>0;\\
a_{m-k}=\beta (k)_{1/q}\text{ for }k>0.
\end{cases}
\]
\end{itemize}
 It is then an easy exercise to show that every specialization belongs to one of the two families.\smallskip

It follows immediately that the specializations of $\kly_+$ are precisely the maps $\alpha\cdot\spec\circ \tau^{-m}$ for $\alpha\in\mathbf{k}, m\geq 0$. The only specialization from $\kly_n$ is the augmentation $u_i\mapsto 0$ sending elements of positive degree to zero.

\section{$q$-divided symmetrization}
\label{sec:qDS}

In this section we introduce $q$-divided symmetrization, which is a natural $q$-deformation of the divided symmetrization operator introduced by Postnikov~\cite{Pos09} and further studied in \cite{Amd16,DS,Pet18}. We state a few properties of this operator before showing its direct connection with the algebra $\kly$ in Theorems~\ref{th:qDS_and_K} and \ref{th:expansion_f}. The connection goes through an an interesting family of numbers, the {\em refined mixed Eulerian numbers $A_c^q$}, further studied in \S\ref{sec:remixed}.

\subsection{Definition and first properties}
Consider the natural action of $\sgrp_n$ on $\mathbf{k}[x_1,\dots,x_n]$ obtained by letting the generator $s_i$ for $1\leq i\leq n-1$ act by swapping $x_i$ and $x_{i+1}$.
The divided difference operator $\partial_i$ is then given by $\frac{1-s_i}{x_i-x_{i+1}}$.
Following \cite[\S2]{DKLLST95}, define operators $\square_i$ for $1\leq i\leq n-1$ acting on functions in $x_1,\dots,x_n$ as\footnote{We note here that the authors of \cite{DKLLST95} act on the left, whereas we act on the right.}
\[
\square_i=\partial_i(qx_i-x_{i+1})
\]
These operators can be described via Demazure-Lusztig operators $T_i$ that act on polynomials
as
\[
T_i=(q-1)\partial_i x_i +s_i.
\]
Indeed, one can check that $\square_i=T_i+1$.
Since the $T_i$ satisfy Hecke relations,\footnote{In fact $s_i$, $\partial_i$, and $T_i$ are all special cases of a five parameter family \cite{LS87} that satisfy Hecke relations.} one obtains a well-defined operator $T_{w}$ for $w\in \sgrp_n$ by picking a reduced word. For $w_o$ the longest word in $\sgrp_n$,  consider the operator $\square_{w_o}$ \cite[Theorem 3.1]{DKLLST95} expressed as follows:
\begin{equation}\label{eq:square}
\square_{w_o}=\partial_{w_o} \prod_{1\leq i<j\leq n}(qx_i-x_j).
\end{equation}

Recall that $\partial_{w_o}$ simply performs antisymmetrization followed by division by the Vandermonde determinant.
In fact, $\square_{w_o}$ is the \emph{maximal Yang-Baxter element} as defined in \cite{DeLa06}, up to a power of $q$; see also \cite[\S 7]{LLT97}.
By \cite[Theorem 3.1]{DKLLST95}, the operator $\square_{w_o}$ affords the expansion
\begin{equation}\label{eq:square2}
\square_{w_o}=\sum_{\sigma\in \sgrp_n}T_{\sigma}.
\end{equation}

\begin{definition}\label{def:qds}
Define the \emph{$q$-divided symmetrization} (abbreviated to \emph{qDS} on occasion) of $f\in \mathbf{k}[x_1,\dots,x_n]$ to be
\begin{align*}
\ds{f}_n^q&\coloneqq\square_{w_o} \frac{f}{\prod_{1\leq i\leq n-1}(qx_i-x_{i+1})}=\sum_{w\in \sgrp_n}T_{w}\left( \frac{f}{\prod_{1\leq i\leq n-1}(qx_i-x_{i+1})}\right)
\end{align*}
\end{definition}

When $q=1$, the Hecke operator $T_w$ reduces to the permutation $w$ acting on polynomials. Definition \ref{def:qds} thus specializes to Postnikov's divided symmetrization \cite[\S 3]{Pos09}, which was the starting point for our definition. We will describe several properties of qDS that extend known properties of the case $q=1$.
\smallskip
Thanks to~\eqref{eq:square} we have $\ds{f}_n^q=\partial_{w_o}\left(f\prod_{1\leq i<j-1\leq n-1}(qx_i-x_j)\right)$ and so $\ds{f}_n^q$ is a symmetric polynomial in $x_1,\dots,x_n$.

It follows that if $\deg(f)<n-1$, then $\ds{f}_n^{q}$ is $0$. So the `first' instance where qDS becomes interesting is when $\deg(f)=n-1$, in which case $f\mapsto \ds{f}_n^{q}$ is a linear form on $\mathbf{k}[x_1,\dots,x_n]$. For the purposes of this article we restrict ourselves to this setting, as it turns out that this is the case of interest in conjunction with the algebra $\kly$.

It is clear that the definition just mentioned make qDS appear rather arduous to compute. The next result, extending a formula of Petrov \cite[Eq. (1)]{Pet18}, goes a long way in simplifying that task.

\begin{lemma}\label{lem:petrov_recursion}
If $f\in \mathbf{k}[x_1,\dots,x_n]$ of degree $n-1$ factors as $g(x_1,\dots,x_i)(qx_i-x_{i+1})h(x_{i+1},\dots,x_n)$, then
\begin{align*}
\ds{f}_n^q= \qbin{n}{i}\ds{g}_{i}^q \ds{h}_{n-i}^{q}.
\end{align*}
\end{lemma}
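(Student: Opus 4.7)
The plan is to exploit the Hecke-sum presentation $\square_{w_o}=\sum_{w\in\sgrp_n} T_w$ of~\eqref{eq:square2}, combined with the parabolic coset decomposition of $\sgrp_n$ adapted to splitting the variables into $\{x_1,\dots,x_i\}$ and $\{x_{i+1},\dots,x_n\}$. The hypothesis on $f$ gives a clean product decomposition of the rational function inside the definition:
\[
F\coloneqq \frac{f}{\prod_{j=1}^{n-1}(qx_j-x_{j+1})}=G(x_1,\dots,x_i)\,H(x_{i+1},\dots,x_n),
\]
where $G=g/\prod_{j=1}^{i-1}(qx_j-x_{j+1})$ and $H=h/\prod_{j=i+1}^{n-1}(qx_j-x_{j+1})$; the middle factor $(qx_i-x_{i+1})$ of $f$ cancels the middle factor of the denominator. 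For degree reasons the scalars $\ds{g}_i^q$ and $\ds{h}_{n-i}^q$ are precisely the values of $\sum_{u\in\sgrp_{\{1,\dots,i\}}} T_u$ applied to $G$, respectively $\sum_{v\in\sgrp_{\{i+1,\dots,n\}}} T_v$ applied to $H$, in the two disjoint blocks of variables.

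The key structural input is the standard fact that the $(i,n-i)$-shuffles are the minimal-length coset representatives of $\sgrp_n/(\sgrp_{\{1,\dots,i\}}\times\sgrp_{\{i+1,\dots,n\}})$: each $w\in\sgrp_n$ factors uniquely as $w=\sigma\cdot u\cdot v$ with $\sigma$ a shuffle, $u\in\sgrp_{\{1,\dots,i\}}$, $v\in\sgrp_{\{i+1,\dots,n\}}$ and $\ell(w)=\ell(\sigma)+\ell(u)+\ell(v)$. Since lengths add, the Hecke braid relations yield $T_w=T_\sigma T_u T_v$, and therefore
\[
\sum_{w\in\sgrp_n}T_w=\Bigl(\sum_\sigma T_\sigma\Bigr)\Bigl(\sum_u T_u\Bigr)\Bigl(\sum_v T_v\Bigr).
\]

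I then apply these three operator sums to $F=GH$ from the inside out. A Leibniz-type calculation using $T_j=(q-1)\partial_j x_j+s_j$ together with $\partial_j(G)=0$ for $j\geq i+1$ shows that $T_j(GK)=G\cdot T_j(K)$ whenever $G$ is free of $x_j,x_{j+1}$; iterating gives $\sum_v T_v(GH)=G\cdot\ds{h}_{n-i}^q$. Since $\ds{h}_{n-i}^q\in\mathbf{k}$, the mirror manoeuvre yields $\sum_u T_u\bigl(G\cdot\ds{h}_{n-i}^q\bigr)=\ds{g}_i^q\cdot\ds{h}_{n-i}^q\in\mathbf{k}$. It remains to apply $\sum_\sigma T_\sigma$ to this constant: from $T_j(1)=(q-1)\partial_j(x_j)+1=q$ and induction along a reduced word we obtain $T_\sigma(c)=q^{\ell(\sigma)}c$ for any $c\in\mathbf{k}$, and the classical $q$-identity $\sum_{\sigma\in\mathrm{Shuf}(i,n-i)}q^{\ell(\sigma)}=\qbin{n}{i}$ assembles the three pieces into the claimed equality. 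The only delicate point is the Leibniz calculation ensuring that each $T_v$ passes through the factor $G$ without a spurious power of $q$; everything else is bookkeeping.
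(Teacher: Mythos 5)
Your proof is correct and follows essentially the same route as the paper's: both factor $\square_{w_o}$ via the parabolic coset decomposition of $\sgrp_n$ with respect to $\sgrp_i\times\sgrp_{n-i}$, apply the inner parabolic Hecke sums to the two blocks to produce $\ds{g}_i^q\ds{h}_{n-i}^q$, and then evaluate $\sum_\sigma T_\sigma$ on this scalar as $\sum_\sigma q^{\ell(\sigma)}=\qbin{n}{i}$. The only difference is that you explicitly justify the step the paper states without comment (that each $T_j$ commutes past a factor free of $x_j,x_{j+1}$), which is a worthwhile clarification but not a different argument.
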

\begin{proof}
Let $C(i,n-i)$ be the set of minimal length coset representatives for $\sgrp_n/ (\sgrp_{i}\times \sgrp_{n-i})$.
We may factor $\square_{w_o}$ as
\begin{align}\label{eq:petrov_lem_1}
	\square_{w_o}=\sum_{w\in \sgrp_n}T_w
	=\sum_{v\in C(i,n-i)}T_{v}\left(\sum_{w'\in S_i}T_{w'}\right)\left(\sum_{w''\in S_{n-i}}T_{w''}\right)
\end{align}
Given the special form that $f$ possesses, from the above factorization we infer that
\begin{align}\label{eq:petrov_lem_2}
\ds{f}_n^q=\ds{g}_i^q\ds{h}_{n-i}^q\sum_{v\in C(i,n-i)}T_{v}(1).
\end{align}
Since $T_i(1)=q$ for any $i$, from \eqref{eq:petrov_lem_2} we obtain
\begin{align}
\ds{f}_n^q=\ds{g}_i^q\ds{h}_{n-i}^q\sum_{v\in C(i,n-i)}q^{\ell(v)}.
\end{align}
The claim follows because the length generating function over minimal length coset representatives is given by $q$-binomial coefficients.
\end{proof}

\subsection{qDS of monomials}

\begin{figure}[ht]
  \begin{tikzpicture}[scale=.6]
    \draw[gray,very thin] (0,0) grid (10,4);
    \draw[line width=0.25mm, black, <->] (0,2)--(10,2);
    \draw[line width=0.25mm, black, <->] (1,0)--(1,4);
    \node[draw, circle,minimum size=5pt,inner sep=0pt, outer sep=0pt, fill=blue] at (1, 2)   (b) {};
    \node[draw, circle,minimum size=5pt,inner sep=0pt, outer sep=0pt, fill=red] at (2, 1)   (c) {};
    \node[draw, circle,minimum size=5pt,inner sep=0pt, outer sep=0pt, fill=blue] at (3, 3)   (d) {};
    \node[draw, circle,minimum size=5pt,inner sep=0pt, outer sep=0pt, fill=blue] at (4, 2)   (e) {};
    \node[draw, circle,minimum size=5pt,inner sep=0pt, outer sep=0pt, fill=red] at (5, 1)   (f) {};
    \node[draw, circle,minimum size=5pt,inner sep=0pt, outer sep=0pt, fill=red] at (6, 0)   (g) {};
    \node[draw, circle,minimum size=5pt,inner sep=0pt, outer sep=0pt, fill=red] at (7, 0)   (h) {};
    \node[draw, circle,minimum size=5pt,inner sep=0pt, outer sep=0pt, fill=blue] at (8, 2)   (i) {};
    \node[draw, circle,minimum size=5pt,inner sep=0pt, outer sep=0pt, fill=blue] at (9, 1)   (j) {};
    \draw[blue, line width=0.7mm] (b)--(c);
    \draw[blue, line width=0.7mm] (c)--(d);
    \draw[blue, line width=0.7mm] (d)--(e);
    \draw[blue, line width=0.7mm] (e)--(f);
    \draw[blue, line width=0.7mm] (f)--(g);
    \draw[blue, line width=0.7mm] (g)--(h);
    \draw[blue, line width=0.7mm] (h)--(i);
    \draw[blue, line width=0.7mm] (i)--(j);
  \end{tikzpicture}
  \caption{$\mc{P}_{c}$ when ${c}=(0,3,0,0,0,1,3,0)$ with $S_{c}=\{1,4,5,6\}$.}
  \label{fig:path_to_compute_Sc}
\end{figure}
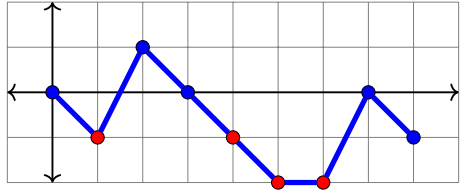

We represent $\bN$-vectors $(c_1,\dots,c_{n})$ as \emph{{\L}ukasiewicz paths} starting at $(0,0)$ by interpreting each $c_i$ as as a vector step $(1,c_i-1)$.
We refer to the path corresponding to $c$ as $\mc{P}_c$.
For instance, Figure~\ref{fig:path_to_compute_Sc} depicts $\mc{P}_{c}$ for  ${c}=(0,3,0,0,0,1,3,0)$
The lattice point on $\mc{P}_c$ with $x$-coordinate $i$ has $y$-coordinate given by $h_i=h_i(c)\coloneqq \left(\sum_{k=1}^ic_k\right) -i$.
In particular $h_0=0$ and $h_n=\sum_ic_i-n$.
We define a statistic by
\begin{align*}
  \sht{c}=\sum_{1\leq i\leq n-1}h_i.
\end{align*}

We are now ready to state the $q$-analogue of \cite[Proposition 3.5]{Pos09}.
The authors do not know of a proof in the spirit of Postnikov's argument which relies on constant term extraction. Instead, our proof, presented in the appendix, is based on the repeated use of Lemma \ref{lem:petrov_recursion}.
\begin{proposition}\label{prop:qDS for monomials}
Let $c=(c_1,\dots,c_{n})$ be an $\bN$-vector with $\sum_ic_i=n-1$.
Let $S$ be the set of $i\in\{1,\ldots,n-1\}$ such that $h_i(c)<0$. Let $X_S$ be the set of permutations in $\sgrp_n$ with descent set $S$.
Then
\begin{equation}
\label{eq:qDS for monomials}
\ds{x_1^{c_1}\dots x_n^{c_n}}_n^q=(-1)^{|S|} \sum_{\sigma\in X_S}q^{\binom{n-1}{2}-\mathrm{inv}(\sigma)-\sht{c}}
\end{equation}
\end{proposition}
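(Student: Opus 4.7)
The plan is to proceed by induction on $n$, using Lemma~\ref{lem:petrov_recursion} as the principal reduction device. The base cases $n=1,2$ are checked by direct computation.

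For the inductive step, the obstacle is that $x^c$ does not carry the factor $(qx_i-x_{i+1})$ required by Lemma~\ref{lem:petrov_recursion}; I would introduce one using the identity $x_{i+1}=qx_i-(qx_i-x_{i+1})$ applied to a single factor of $x_{i+1}$ in the monomial. This splits
\[
x^c \;=\; q\, x^{c''} \;-\; (qx_i-x_{i+1})\cdot x_1^{c_1}\cdots x_i^{c_i}\cdot x_{i+1}^{c_{i+1}-1}\cdots x_n^{c_n},
\]
with $c''$ obtained from $c$ by transferring one unit of mass from $c_{i+1}$ to $c_i$. The strategic choice is $i=\min S$: then $h_i(c)=-1$, which simultaneously ensures that the degrees on the second summand match the hypothesis of Lemma~\ref{lem:petrov_recursion} (producing a nontrivial contribution $\qbin{n}{i}\,\ds{x^{c^{(1)}}}_i^q\,\ds{x^{c^{(2)}}}_{n-i}^q$, with $c^{(1)}\in\wc{i}{i-1}$ and $c^{(2)}\in\wc{n-i}{n-i-1}$ handled by the outer induction on $n$), and reduces $|S|$ by one in $c''$, enabling an inner induction. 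The edge case $c_{i+1}=0$ at $i=\min S$ requires a preliminary application of the dual identity $x_j=\frac{1}{q}(qx_j-x_{j+1})+\frac{1}{q}x_{j+1}$ at some $j>i$; in these preliminary steps the Lemma~\ref{lem:petrov_recursion} contribution vanishes by degree mismatch, so they merely rearrange the path until the primary rewriting becomes available. The base case $|S|=0$ forces $c_n=0$ and a {\L}ukasiewicz non-negativity condition on $c$; the formula reduces to $\ds{x^c}_n^q=q^{\binom{n-1}{2}-\sht{c}}$, which I would verify by another induction, peeling off an appropriate boundary factor.

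The hard part will be verifying that the claimed expression $F(c):=(-1)^{|S|}\sum_{\sigma\in X_S}q^{\binom{n-1}{2}-\mathrm{inv}(\sigma)-\sht{c}}$ satisfies the same recursion, namely $F(c)=qF(c'')-\qbin{n}{i}F(c^{(1)})F(c^{(2)})$. After substituting the definitions and comparing powers of $q$, this reduces to a $q$-shuffle identity relating $\sum_{\mathrm{Des}(\sigma)\in\{S,S\setminus\{i\}\}}q^{-\mathrm{inv}(\sigma)}$ to a product $\qbin{n}{i}\sum_\tau q^{-\mathrm{inv}(\tau)}\sum_\pi q^{-\mathrm{inv}(\pi)}$ over smaller symmetric groups with prescribed descent sets. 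This is a standard consequence of the length generating function $\qbin{n}{i}$ for minimum-length coset representatives of $\sgrp_n/(\sgrp_i\times\sgrp_{n-i})$ combined with the additivity of $\mathrm{inv}$ under the corresponding shuffle decomposition. The main technical care lies in the bookkeeping of descent sets and inversions under the iterative splitting.
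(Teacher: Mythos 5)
Your plan shares the skeleton of the paper's proof: reduce via Lemma~\ref{lem:petrov_recursion} applied to a manufactured factor $(qx_i-x_{i+1})$, then check that the closed form $F(c)$ satisfies the resulting recursion by a shuffle/coset argument over $\sgrp_i\times\sgrp_{n-i}\subset\sgrp_n$. The combinatorial verification at the end (length generating functions over minimal coset representatives and additivity of $\inv$) is essentially identical to the paper's.

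Where you diverge, however, you introduce genuine gaps. The paper begins with a pre-normalization step you omit: using the same type of rewrite, but at indices where the Lemma contribution vanishes for degree reasons, one can reduce to the case $h_i(c)\in\{0,-1\}$ for all $i$, which shows $\ds{x^c}_n^q = q^{\text{(shift)}}\ds{\alpx(S)}_n^q$ where $\alpx(S)$ is the unique normalized monomial with descent set $S$. This normalization buys you two things you lack. First, in the normalized world the recursion is run at $i\notin S$ by peeling off a factor of $x_i$ (not $x_{i+1}$); one checks $c(S)_i\geq 1$ always, so there is no edge case at all. Your choice of $i=\min S$ and peeling $x_{i+1}$ hits the obstruction $c_{i+1}=0$, and your proposed fix is not correct as stated: the ``dual'' move $x_j=\frac1q(qx_j-x_{j+1})+\frac1q x_{j+1}$ transfers mass from $c_j$ to $c_{j+1}$ and thus \emph{decreases} $h_j$, whereas the configuration $c_{i+1}=0$ at $i=\min S$ forces $h_{i+1}=-2$ and requires moves that \emph{increase} the $h$-values; it is also not clear that such preliminary moves terminate or that the qDS is unchanged (vanishing of the Lemma term requires $h_j\neq 0$, which you never guarantee). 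Second, normalization collapses the base of the recursion to a single monomial $\alpx([n-1])=x_2\cdots x_n$, handled by a short explicit lemma; your base case $S=\emptyset$ is the full Catalan-sized family of $\abb_n$ monomials, and proving $\ds{x^c}_n^q=q^{\binom{n-1}{2}-h(c)}$ for all of them ``by peeling off an appropriate boundary factor'' is a nontrivial sub-result in itself (it is Theorem~\ref{thm:abb_qds} of the paper, which is there deduced \emph{from} the proposition you are trying to prove). In short: right toolbox, right closing argument, but the recursion as you have set it up is not well-founded without the normalization step, and the edge-case patch you propose goes in the wrong direction.
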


\begin{example}
\label{ex:demo_qds_monomials}
Consider the monomial corresponding to $c=(1,0,1)$. Then $S=\{2\}$, and therefore $X_{S}=\{132,231\}$. Also, we have $h(c)=(1-1)+(1-2)=-1$.
Proposition~\ref{prop:qDS for monomials} tells us that
\[
\ds{x_1x_3}_3^q=-(q^{\binom{3-1}{2}-\mathrm{inv}(132)+1}+q^{\binom{3-1}{2}-\mathrm{inv}(231)+1})=-(q+1).
\]
\end{example}

\subsection{Remixed Eulerian numbers}
\label{sub:remixed_Eulerian}
For an $\bN$-vector $c=(c_1,\dots,c_{r})$, let
\[y_c=x_1^{c_1}(x_1+x_2)^{c_2}\dots (x_1+\dots+x_r)^{c_r},\]
a polynomial of degree $|c|$.

\begin{definition}[Remixed Eulerian numbers $A_c^q$]
\label{defi:remixed_Eulerian}
Let $c\in\wc{r}{r}$, so that $c=(c_1,\dots,c_{r})$ with $|c|=r$. We define the \emph{remixed Eulerian number} $A_{c}^q\in \mathbf{k}$ by
\begin{equation}
A_{c}^q=\ds{y_c}_{r+1}^q.
\end{equation}
\end{definition}

Note that $A_{c}^q$ indeed belongs to the base field $\mathbf{k}$ since $y_c$ has degree $r$ and we apply qDS with respect to $r+1$ variables, see remarks following Definition~\ref{def:qds}. For $q=1$, we get back the \emph{mixed Eulerian numbers} $A_c=A_c^{1}$ introduced by Postnikov~\cite[\S 16]{Pos09}; see also \cite[\S 4]{NT20}.

\begin{remark}
It follows from the definition of qDS that $A_c^{q}\in \bZ[q]\subset \mathbf{k}$. If $q$ is transcendental over $\bQ\subset\mathbf{k}$, then $A_c^{q}$ is a polynomial in $q$ that does not depend on $\mathbf{k}$. This explains why we can, and will, safely talk about remixed Eulerian numbers independently of $\mathbf{k}$, as polynomials is $q$.
\end{remark}

\begin{remark}
  \label{rem:why y_c 0}
 Notice that we divide-symmetrize with respect  to $x_1,\ldots,x_{r+1}$, while $y_c$ for $c=(c_1,\dots,c_{r})$ only depends on variables among $x_1,\ldots,x_{r}$. As the $T_i$ admit symmetric polynomials as scalars, we have $\ds{y_{(c_1,\ldots,c_{r+1})}}_{r+1}^q=0$ whenever $c_{r+1}>0$, so there is no loss of generality in restricting to sequences with $c_{r+1}=0$.
\end{remark}

\begin{theorem}
\label{thm:uniqueness_via_recurrence_petrov}
	For a fixed positive integer $r$, the family $(A^q_{c})_{c\in\wc{r}{r}}$ is uniquely determined by the following conditions:
	\begin{enumerate}[label=(\alph*)]
		\item \label{it:uniq1} $A^q_{(1^{r})}=\qfact{r}$.
		\item \label{it:uniq2} for any $i=1,\ldots,r$ such that $c_i\geq 2$,
		\begin{equation}
		\label{eq:Ac_relation}
		(q+1)A^q_{(c_1,\dots,c_{r})}= qA^q_{(c_1,\dots,c_{i-1}+1,{c_i}-1,\dots,c_{r})}+A^q_{(c_1,\dots,c_i-1,c_{i+1}+1,\dots,c_{r})}.
		\end{equation}
	\end{enumerate}
If $i=1$ (resp. $i=r$), then the first (resp. second) term on the right hand side is to be omitted.
\end{theorem}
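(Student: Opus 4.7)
The plan has two components: existence (verifying that $A^q_c = \ds{y_c}_{r+1}^q$ satisfies conditions (a) and (b)) and uniqueness (showing any family satisfying (a) and (b) must equal $A^q_\cdot$). The bridge to the Klyachko algebra $\kly_{r+1}$, via Theorem~\ref{th:main_2}, drives both.

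First, I would observe that substituting $x_i \mapsto u_i - u_{i-1}$ (with $u_0 = 0$) makes $x_1 + \cdots + x_i \mapsto u_i$, so that $y_c(u_1, u_2 - u_1, \ldots, u_r - u_{r-1}, -u_r) = u^c$ in $\kly_{r+1}$. Theorem~\ref{th:main_2} thus gives $A^q_c = \qfact{r}\cdot \Top_{r+1}(u^c)$, where $\Top_{r+1}(u^c)$ is the coefficient of $u_{[1,r]} = u_1 \cdots u_r$ in the expansion of $u^c$ in the squarefree basis $\mcbkn$. Condition (a) is then immediate because $u^{(1^r)} = u_{[1,r]}$ is already squarefree, so $\Top_{r+1}(u^{(1^r)}) = 1$. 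For condition (b), whenever $c_i \geq 2$, multiply the defining Klyachko relation $(q+1)u_i^2 = q u_i u_{i-1} + u_i u_{i+1}$ by the degree-$(r-2)$ monomial $u^{c - 2e_i}$ to obtain, in $\kly_{r+1}$,
\[
(q+1)\,u^c = q\, u^{c - e_i + e_{i-1}} + u^{c - e_i + e_{i+1}}.
\]
The boundary conventions $u_0 = u_{r+1} = 0$ (which hold in $\kly_{r+1}$) automatically kill the appropriate term when $i = 1$ or $i = r$. Applying $\qfact{r}\cdot \Top_{r+1}$ to both sides yields (b).

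For uniqueness, the key observation is that the degree-$r$ component $\kly_{r+1}^{(r)}$ is one-dimensional: by Proposition~\ref{prop:uI_basis}, its squarefree basis in degree $r$ consists of the single element $u_{[1,r]}$, so every $u^c$ with $c \in \wc{r}{r}$ can be written as $u^c = \lambda_c\, u_{[1,r]}$ for some $\lambda_c \in \mathbf{k}$. Now the relations among the $u^c$ for $c \in \wc{r}{r}$ inside $\kly_{r+1}^{(r)}$ come from multiplying the defining Klyachko relations by degree-$(r-2)$ monomials, which are exactly the relations (b). Consequently, any function $B : \wc{r}{r} \to \mathbf{k}$ satisfying (b) descends to a well-defined linear form on the one-dimensional space $\kly_{r+1}^{(r)}$, and hence $B_c = B_{(1^r)}\cdot \lambda_c$; imposing (a) forces $B_c = \qfact{r}\cdot \lambda_c = A^q_c$.

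The main technical point is justifying that the relations (b) already generate \emph{all} relations among top-degree monomials in $\kly_{r+1}$, and not merely a subset. This is exactly what the proof of Proposition~\ref{prop:uI_basis} establishes, via Lemma~\ref{lem:mult_by_ui}: iterating the basic Klyachko relations along a maximal interval $[a,b]\subset \supp(c)$ containing $i$ produces the stronger identity
\[
\qint{b-a+2}\,u^c = q^{i-a+1}\qint{b-i+1}\,u^{L_i(c)} + \qint{i-a+1}\,u^{R_i(c)},
\]
whose right-hand side has strictly smaller statistic $e(c) = |c| - |\supp(c)|$. Induction on $e(c)$ (with base case $e(c) = 0$, forcing $c = (1^r)$) then reduces every $u^c$ to a scalar multiple of $u_{[1,r]}$ using only (b)-type relations, completing the uniqueness argument.
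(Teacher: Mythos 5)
Your proposal has a serious circularity problem. You invoke Theorem~\ref{th:main_2} (i.e.\ Theorem~\ref{th:qDS_and_K}, $\ds{f}_{n}^q = \qfact{n-1}\Top_n(f)$) to identify $A^q_c = \ds{y_c}_{r+1}^q$ with $\qfact{r}\cdot\Top_{r+1}(u^c)$, and then derive conditions (a), (b) by operating in $\kly_{r+1}$. But in the paper, Theorem~\ref{th:qDS_and_K} is proved \emph{after} and \emph{using} Theorem~\ref{thm:uniqueness_via_recurrence_petrov}: the proof of Theorem~\ref{th:qDS_and_K} defines auxiliary numbers $A'_c$ from the squarefree expansion in $\kly$ and checks that they satisfy (a), (b), then appeals to the uniqueness assertion of the present theorem to conclude $A'_c = A^q_c$. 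So your existence argument assumes exactly what must be proven. The paper instead establishes (a) and (b) \emph{directly} from the qDS definition: for (a), $y_{(1^r)}$ is expanded into $r!$ monomials $x_{i_1}\cdots x_{i_r}$ and Proposition~\ref{prop:qDS for monomials} evaluates each, giving $\qfact{r}$; for (b), the claim reduces to $\ds{y_{(\dots,c_i-1,\dots)}(qx_i - x_{i+1})}_{r+1}^q = 0$, which follows from the Petrov-type factorization in Lemma~\ref{lem:petrov_recursion} and a degree count. Neither step passes through $\kly$. Your proof would become correct only if one could supply an independent proof of Theorem~\ref{th:qDS_and_K}, which the paper does not do.

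Your uniqueness argument is correct and is essentially the paper's argument, just phrased more abstractly at first. The observation that relations (b) descend to a linear form on the one-dimensional degree-$r$ piece of $\kly_{r+1}$ is a clean way to put it, but you then correctly note that the real content is the refined recursion from Lemma~\ref{lem:mult_by_ui}
\[
\qint{b-a+2}\,A^q_c = q^{i-a+1}\qint{b-i+1}\,A^q_{L_i(c)} + \qint{i-a+1}\,A^q_{R_i(c)},
\]
which strictly decreases $e(c) = |c| - |\supp(c)|$. This is precisely the paper's equation~\eqref{eq:recurrence_A}, derived by solving the same tridiagonal linear system. So the uniqueness halves match; only the existence half is broken by circularity.
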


\begin{example}
\label{ex:remixed_Eulerians}
Here are all remixed Eulerian numbers $A_c^q$ indexed by $c=(c_1,c_2,c_3)$ such that $|c|=3$. We suppress commas and parentheses in writing our compositions for clarity.
\begin{align*}
\begin{array}{lllll}
A_{111}^q=(1+q)(1+q+q^2) & A_{120}^q=(1+q)^2  & A_{201}^q=(1+q)^2 & A_{210}^q=(1+q)^2 & A_{300}^q=1\\
A_{102}^q=q(1+q+q^2) & A_{012}^q=q^2(1+q) & A_{021}^q=q(1+q)^2 & A_{030}^q=2q(1+q) & A_{003}^q=q^3
\end{array}
\end{align*}

The careful reader will have observed that all $A_c^q$ listed here are polynomials in $q$ with \emph{nonnegative} integral coefficients. This is not immediate from Theorem~\ref{thm:uniqueness_via_recurrence_petrov}, nor from Definition~\ref{defi:remixed_Eulerian}.
We establish this in \S\ref{sec:remixed} by working with a variant of equation~\eqref{eq:recurrence_A}.
\end{example}

\begin{proof}[Proof of Theorem~\ref{thm:uniqueness_via_recurrence_petrov}]
Let us first show that the numbers $A^q_{c}$ satisfy \ref{it:uniq1} and \ref{it:uniq2}. First, expanding $y_{(1^{r})}=x_1(x_1+x_2)\dots (x_1+\dots+x_r)$ gives the sum of all $r!$ monomials $x_{i_1}\dots x_{i_r}$ with $1\leq i_j\leq j$. 
Write such monomials as $\alpx_{\bf i}$ with ${\bf i}=(i_1,\dots,i_r)$. We apply Proposition~\ref{prop:qDS for monomials} with $n=r+1$ to each of those monomials: let $m({\bf i})=(m_1,\dots,m_r,m_{r+1})$ where $m_j$ is the number of occurrences of $j$ in $m({\bf i})$. Then the descent set $S$ attached to any vector $m({\bf i})$ is empty, and thus \eqref{eq:qDS for monomials} simplifies to $\ds{\alpx_{\bf i}}_{r+1}^q =q^{\binom{r}{2}- h(m({\bf i}))}$. Using $\sum_j jm_j=\sum_ji_j$, one computes easily $h(m({\bf i}))=\binom{r}{2}-\sum_{j=1}^ri_j$. It follows that 
\[A^q_{(1^{r})}=\sum_{\bf i}\ds{\alpx_{\bf i}}_{r+1}^q=\sum_{\bf i}q^{\sum_{j=1}^ri_j}\]
 which is readily seen to be the desired $q$-factorial, proving \ref{it:uniq1}.

\smallskip

We now focus on \ref{it:uniq2}.
To establish it we need to check that
\begin{align}
\ds{(q+1)y_{(c_1,\dots,c_{r})}-qy_{(c_1,\dots,c_{i-1}+1,{c_i}-1,\dots,c_{r})}-y_{(c_1,\dots,c_i-1,c_{i+1}+1,\dots,c_{r})}}_{r+1}^q=0,
\end{align}
which by the definition of $y_c$ is equivalent to
\begin{align}\label{eq:goal_for_property_2}
\ds{y_{(c_1,\dots,c_{i-1},{c_i}-1,c_{i+1},\dots,c_r)}(qx_i-x_{i+1})}_{r+1}^q=0.
\end{align}
By expressing $x_1+\cdots+x_m$ as $(x_1+\cdots+x_i)+(x_{i+1}+\cdots+x_m)$  for $m>i$, we may expand $y_{(c_1,\dots,c_{i-1},{c_i}-1,c_{i+1},\dots,c_r)}$ as
\begin{align}
y_{(c_1,\dots,c_{i-1},{c_i}-1,c_{i+1},\dots,c_r)}=\sum_{k\geq 0}y_{(c_1,\dots,c_{i-1},c_i-1+k)}f_k(x_{i+1},\dots,x_r)
\end{align}
where only finitely many polynomials $f_k$ are nonzero.
We can then apply apply Lemma~\ref{lem:petrov_recursion} to obtain
\begin{align}
\ds{y_{(c_1,\dots,c_{i-1},{c_i}-1,\dots,c_r)}(qx_i-x_{i+1})}_{r+1}^q=\qbin{{r+1}}{i}\sum_{k\geq 0}\ds{y_{(c_1,\dots,c_{i-1},c_i-1+k)}}_{i}^q \ds{f_k}_{r+1-i}^{q}.
\end{align}

We claim that the right hand side vanishes. Indeed if $\deg(f_k)<r-i$, this is already true.
So let us suppose that $\deg(f_k)\geq r-i$.
If so, then we must have $\deg(y_{(c_1,\dots,c_{i-1},c_i-1+k)})\leq i-1$.
Since $c_i\geq 2$, we know that $c_i-1+k\geq 1$, and thus $\ds{y_{(c_1,\dots,c_{i-1},c_i-1+k)}}_{i}^q=0$ because of the presence of the symmetric factor $x_1+\cdots+x_i$.
Thus, the equality in \eqref{eq:goal_for_property_2} follows.
The preceding argument is technically only valid for $i\neq 1,r$, and we leave it to the reader to check those two cases separately.\smallskip

Now let us show that the $A_c^q$ are the only numbers satisfying these relations. The issue is that property ~\ref{it:uniq2} in Theorem~\ref{thm:uniqueness_via_recurrence_petrov} does not immediately determine these numbers by induction.
However, one can easily deduce the new relations \eqref{eq:recurrence_A} below.
These relations together with the initial condition \ref{it:uniq1} show that such a family of numbers is unique if it exists, by induction on $e(c)=|c|-|\supp(c)|$.

 Let $i$ such that $c_i\geq 2$. Let $I=[a,b]\subset [1,r]$ be the maximal interval of the support of $c$ containing $i$. Define $L_i(c),R_i(c)$ as in Lemma~\ref{lem:mult_by_ui}. A slight modification is in order: that if $a=1$ (resp. $b=r$) then $L_i(c)$ (resp. $R_i(c)$) has support outside of $[1,r]$. In this case the corresponding mixed Eulerians are not defined, we set them to $0$. Then we have
\begin{equation}
\label{eq:recurrence_A}
\qint{b-a+2}A_{c}^q=q^{i-a+1}\qint{b-i+1}A_{L_i(c)}^q+\qint{i-a+1}A_{R_i(c)}^q.
\end{equation}

The proof is entirely identical to Lemma \ref{lem:mult_by_ui}, since the system of equations to solve is the same. This completes the proof of Theorem~\ref{thm:uniqueness_via_recurrence_petrov}.
\end{proof}
We reiterate that the relation in property ~\ref{it:uniq2} in Theorem~\ref{thm:uniqueness_via_recurrence_petrov} is of a different character than that in equation~\eqref{eq:recurrence_A}.
Indeed the former implies, for instance, that
\begin{align}
A_{012}^q=(1+q)A_{021}^q,
\end{align}
whereas the latter says that
\begin{align}
\qint{3}A_{012}^q=q^2\qint{1}A_{111}^q.
\end{align}
The second immediately implies given the initial condition $A_{111}^q=\qfact{3}$ that $A_{012}=q^2(1+q)$. On the other hand, the first relation does not yield the same return right away.

\subsection{qDS as top coefficient in $\kly$}
\label{sub:qDS and K}
Given the similarity in the second property in Theorem~\ref{thm:uniqueness_via_recurrence_petrov} and the defining relations of the Klyachko algebra $\kly$, one senses that there is an intimate relation between qDS and computations in $\kly$. This relation in the case $q=1$ is implicit in \cite{NT20}, in which case this connection has a geometric explanation; cf. Remark~\ref{rem:q1_proof} below.

 We make the relation precise here, allowing for general $q$. We need to introduce certain elementary morphims first.
\begin{definition}[$\morph,\morph_+,\morph_n$ and $\projmorph,\projmorph_+,\projmorph_n$]
We define the morphism of algebras from $\mathbf{k}[\alpx]$ to $\mathbf{k}[\alpz]$ by 
\[\morph:x_i\mapsto z_i-z_{i-1}.\]
 We define also $\morph_+$ from $\mathbf{k}[\alpx_+]$ to $\mathbf{k}[\alpz_+]$, and $\morph_n$ from $\mathbf{k}[\alpx_n]$ to $\mathbf{k}[\alpz_{n-1}]$ by restriction where $z_i$ is set to $0$ if it is not part of the image. Explicitly,
$\morph_+(x_i)=z_i-z_{i-1}$ for $i>1$ and $\morph_+(x_1)=z_1$; and $\morph_n(x_i)=z_i-z_{i-1}$ for $1<i<n$, $\morph_n(x_1)=z_1$, $\morph_n(x_n)=-z_{n-1}$. 

Finally, we define $\projmorph\coloneqq \proj\circ \morph$, $\projmorph_+\coloneqq \proj\circ \morph_+$, and $\projmorph_n\coloneqq\proj\circ \morph_n$.
\end{definition}

Note in particular that $\morph_+(x_1+\dots+x_i)=z_i$ so that $\morph_+(y_c)=z^c$.
These morphisms are illustrated in Figure~\ref{fig:all_morphisms}.
\begin{figure}
\begin{tikzcd}[column sep=5.6em,row sep=1.9em]
& \mathbf{k} &\\ 
\kly\simeq \mathbf{k}[\alpz]/\ideal \arrow[ur,"\mathrm{sp}"]\arrow[r,"\text{$u_i=0$ for $i\leq 0$}",swap] &  \kly_+  \arrow[u,"u_i\mapsto\qint{i}",swap]\arrow[r,"\text{$u_i=0$ for $i\geq  n$}",swap] & \kly_n\\
{}& {}& {} & \mathbf{k}[\alpx_n]/\qsym_n^+ \arrow[ul]\\
\mathbf{k}[\alpz] \arrow[uu,"\pi:z_i\mapsto u_i",swap] \arrow[r,leftarrow] &  \mathbf{k}[\alpz_+] \arrow[uu,"\pi_+",swap]\arrow[r,leftarrow]&   \mathbf{k}[\alpz_{n-1}]\arrow[uu,"\pi_n",swap]\\
{}& {}& {} & \mathbf{k}[\alpx_n]/\sym_n^+ \arrow[uu] \\
\mathbf{k}[\alpx] \arrow[uu,"\Delta:x_i\to z_i-z_{i-1}",swap] \arrow[r,leftarrow] \arrow[uuuu,bend left=40, crossing over,"\rho",near end]
 &  \mathbf{k}[\alpx_+] \arrow[uu,"\Delta_+",swap]\arrow[r,leftarrow] \arrow[uuuu,bend left=40, crossing over,"\rho_+",near end]
&   \mathbf{k}[\alpx_{n}]\arrow[uu,"\Delta_n",swap]\arrow[uuuu,bend left=40, crossing over,"\rho_n",near end]\arrow[ur]
\end{tikzcd}
\caption{\label{fig:all_morphisms}
Commutative diagram grouping various algebra morphisms relevant to us. The morphisms from the bottom row are introduced in~\S\ref{sub:qDS and K}. The right hand side is dealt with in \S\ref{sub:ABB}, cf. Corollary~\ref{cor:rho vanishes on qsym_+}.}
\end{figure}

\begin{theorem}
\label{th:qDS_and_K} Let $f$ be a polynomial in $\mathbf{k}[\alpx_n]$  with $\deg(f)=n-1$.
Consider the expansion of $\rho_n(f)=f(u_1,u_2-u_1,\dots, u_{n-1}-u_{n-2},-u_{n-1})$ in the  basis $\mcbkn$, and let $\Top_n(f)$ be the top coefficient in this expansion, namely that of $u_1\cdots u_{n-1}$. Then
\begin{equation}
\label{eq:qDS_and_K}
\ds{f}^q_n= \qfact{n-1} \times \Top_n(f).
\end{equation}

\end{theorem}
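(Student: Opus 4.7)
The plan is to verify~\eqref{eq:qDS_and_K} on a basis of the space of homogeneous degree-$(n-1)$ polynomials in $\mathbf{k}[\alpx_n]$, exploiting that both sides are linear forms in $f$. I would use $\{y_c : c \in \wc{n}{n-1}\}$, which is a basis because $y_c = \alpx^c + (\text{terms smaller in lex})$ (the monomial $\alpx^c$ arises by picking $x_i$ from the factor $(x_1+\cdots+x_i)^{c_i}$, with coefficient $1$), and the leading monomials $\alpx^c$ exhaust all degree-$(n-1)$ monomials in $n$ variables. The argument then splits according to whether $c_n$ is zero or positive.

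When $c_n > 0$, both sides vanish. For the right-hand side, $(x_1+\cdots+x_n)^{c_n}$ divides $y_c$ while $\morph_n(x_1+\cdots+x_n) = \sum_{i=1}^n (z_i - z_{i-1}) = 0$ (with the convention $z_0 = z_n = 0$), hence $\projmorph_n(y_c) = 0$ and $\Top_n(y_c) = 0$. For the left-hand side, use $\ds{f}_n^q = \partial_{w_o}\bigl(f \cdot \prod_{1\le i,\,1<j-i\le n-1}(qx_i-x_j)\bigr)$ and factor the symmetric $(x_1+\cdots+x_n)^{c_n}$ out of $\partial_{w_o}$; what remains has degree $\binom{n}{2}-c_n < \binom{n}{2}$, and $\partial_{w_o}$ annihilates it.

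When $c_n = 0$, set $c' = (c_1,\ldots,c_{n-1}) \in \wc{n-1}{n-1}$, so $y_c = y_{c'}$ and $\ds{y_{c'}}_n^q = A^q_{c'}$ by Definition~\ref{defi:remixed_Eulerian}. Since $\morph_n(x_1+\cdots+x_i) = z_i$ for $i<n$, we have $\morph_n(y_{c'}) = z^{c'}$ and thus $\projmorph_n(y_{c'}) = u^{c'} \in \kly_n$. The top-degree component of $\kly_n$ is one-dimensional, spanned by $u_{[1,n-1]}$ (Hilbert series $(1+t)^{n-1}$, see the proof of Proposition~\ref{prop:uI_basis}), so $u^{c'} = \Top_n(y_{c'})\cdot u_{[1,n-1]}$. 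It remains to prove $A^q_{c'} = \qfact{n-1}\cdot \Top_n(y_{c'})$ for every $c' \in \wc{n-1}{n-1}$.

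I would conclude via the uniqueness statement of Theorem~\ref{thm:uniqueness_via_recurrence_petrov}. The two sides agree at $c' = (1^{n-1})$ (there $u^{c'} = u_{[1,n-1]}$ gives $\Top_n = 1$, and $A^q_{(1^{n-1})} = \qfact{n-1}$), and both satisfy recursion~\eqref{eq:recurrence_A}: for $\Top_n(y_{c'})$ this comes from applying~\eqref{eq:recurrence_uc} to $u^{c'}$ inside $\kly_n$ and extracting the coefficient of $u_{[1,n-1]}$. The point demanding care, and the main obstacle, is that the boundary terms match up: when $L_i(c')$ or $R_i(c')$ has support meeting $\{0,n\}$, the corresponding squarefree monomial $u^{L_i(c')}$ or $u^{R_i(c')}$ vanishes in $\kly_n$ because $u_0 = u_n = 0$, precisely matching the convention $A^q_d = 0$ adopted in~\eqref{eq:recurrence_A}. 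Granted this compatibility, the two recursions coincide verbatim, and uniqueness finishes the proof.
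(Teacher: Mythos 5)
Your proof is correct and follows essentially the paper's approach: both reduce to the basis $\{y_c\}$ of the degree-$(n-1)$ component, dispose of the case $c_n>0$ by showing both sides vanish, and close the case $c_n=0$ by invoking the uniqueness characterization in Theorem~\ref{thm:uniqueness_via_recurrence_petrov}. The only (cosmetic) difference is which recursion you check: the paper verifies condition~(b) of that theorem directly, by multiplying the defining relation~\eqref{eq:defining_q_klyachko} by $\qfact{r}u^d$ and reading off the coefficient of $u_{[1,n-1]}$ in $\kly_n$ (which makes the boundary terms you flag as delicate disappear automatically since $u_0=u_n=0$), whereas you verify the derived gambler's-ruin recursion~\eqref{eq:recurrence_A}; both routes finish the uniqueness argument.
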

\begin{proof}
Observe that polynomials
$y_{(c_1,\dots,c_{n})}=x_1^{c_1}(x_1+x_2)^{c_2}\dots (x_1+\dots+x_n)^{c_n}$ with $c=(c_1,\dots,c_n)$ satisfying $|c|=n-1$ comprise a basis of the degree $n-1$ homogeneous component of $\mathbf{k}[\alpx_n]$.
By linearity, it suffices to check \eqref{eq:qDS_and_K} on these basis elements.

When $c_n>0$, then $\projmorph_n(y_c)$ vanishes and therefore $\Top_n(y_c)=0$. On the other hand, as mentioned in Remark~\ref{rem:why y_c 0}, we have $\ds{y_c}_n^q=0$ as well. Hence we now assume $c_n=0$, and define $r=n-1$. We also redefine  $c=(c_1,\dots,c_r)$ to keep notations simple. Now $\projmorph_n(y_c)=u^c$, so the right hand side of\eqref{eq:qDS_and_K} for $f=y_c$ is $\qfact{r}$ times the coefficient of $u_{[r]}$ in $u^c$; let $A'_c$ be this value. Since $\ds{y_c}_n^q=A_c^q$, we now need to show that the numbers $A'_c$ satisfy the two conditions of Theorem~ \ref{thm:uniqueness_via_recurrence_petrov}.

If $c=(1^{r})$ then $u^c=u_{[r]}$ so $A'_c=\qfact{r}$ as Property \ref{it:uniq1} in Theorem~\ref{thm:uniqueness_via_recurrence_petrov}. Now if we have any $c_i\geq 2$, consider the relation~\eqref{eq:defining_q_klyachko}, multiply both sides by $\qfact{r}u^d$ for $d=(c_1,\dots,c_{i-1},c_i-2,c_{i+1},\dots,c_r)$ and extract the coefficient of $u_{[r]}$. The resulting relation is Property \ref{it:uniq2} in Theorem~\ref{thm:uniqueness_via_recurrence_petrov} for $A'_c$, which finally shows that $A'_c=A_c$ and completes the proof.
\end{proof}

\begin{example}
\label{ex:qDS_and_K}
We revisit the monomials in Example~\ref{ex:demo_qds_monomials}. We set $n=3$. If $f=x_1^2$, then $\projmorph_n(f)=u_1^2$, which in view of the relation $(1+q)u_1^2=u_1u_2$ implies that the coefficient of $u_1u_2$ in $\projmorph_n(f)$ is $1/(1+q)$. Theorem~\ref{th:qDS_and_K} then says $\ds{x_1^2}_3^q=1$.

On the other hand, if we pick $f=x_1x_3$, then $\projmorph_n(f)=-u_1u_2$. This time, the coefficient that needs to be scaled by $\qfact{2}$ is $-1$, and therefore $\ds{x_1x_3}_3^q=-(1+q)$.
\end{example}

\begin{remark}
\label{rem:q1_proof} For $q=1$, Theorem~\ref{th:qDS_and_K} follows from a geometric viewpoint, and the statement was exploited by the authors in \cite{NT20}. The reasoning is as follows: we pick $f$ to be the Schubert polynomial $\schub{w}$ for a permutation $w\in\sgrp_{n}$ of length $n-1$. It can be shown that it suffices to check the equality in Theorem~\ref{th:qDS_and_K} for such $f$.\footnote{The reason is that these Schubert polynomials span a vector space inside all polynomials in $\mathbf{k}[\alpx_n]$  with $\deg(f)=n-1$, which possesses a complementary subspace on which both sides of~\eqref{eq:qDS_and_K} vanish.}
 Now on the one hand, $\ds{\schub{w}}^1_{n}$ computes certain intersection numbers $a_w$ for the permutahedral variety, as explained in ~\cite{NT20}, following~\cite{And10,Kim20}. On the other hand, work of Klyachko \cite{Kly85,Kly95} mentioned in the introduction shows that these same numbers $a_w$ can be computed in the ring $\kly_n^1\simeq H^* (\Perm_n,\bQ)^{\sgrp_n}$ by the formula $(n-1)! \times \Top_n(\schub{w})$. This completes the argument. In \S\ref{sub:Kim}, we will see how these calculations turn out to also have a geometric content when $q$ is a prime power.
\end{remark}

\subsection{Extension to all coefficients in $\mcbk$}
\label{ssub:more general expansions}

Theorem~\ref{th:qDS_and_K} shows that a certain coefficient in the basis $\mcbk$ of squarefree monomials can be interpreted in terms of qDS. It is therefore natural to inquire about the coefficients of other squarefree monomials. In this subsection we show that they can also be expressed in terms of qDS. 

Let $I$ be a finite subset of $\bZ$, and write it uniquely as a union of maximal intervals: \[I=I_1\sqcup I_2\sqcup \ldots \sqcup I_k\] with $I_j=[a_j,b_j]$ and $a_{j+1}-b_j\geq 2$ for $1\leq j<k$.
Given $f$ in $\mathbf{k}[\alpz]$, define
\[
f^I(\alpx)=f(\alpz)_{\left|{\begin{cases}z_i=0 \text{ if }i\notin I;\\ z_i=x_{a_j}+\cdots +x_{i}\text{ if }i\in I_j.\end{cases}}\right.}.
\]

Note that $f\mapsto f^I$ is an algebra morphism from $\mathbf{k}[\alpz]$ to $\mathbf{k}[x_i,~i\in I]$. Define
\begin{equation}
\label{eq:QI}
 Q_I(\alpx)\coloneqq \prod_{j=1}^{k-1}(qx_{b_{j}+1}-x_{b_{j}+2})\times\cdots\times (qx_{a_{j+1}-1}-x_{a_{j+1}}).
\end{equation}

\begin{example}
Consider $I=\{1,3,4,5,9,10\}$. Then we have $I_1=\{1\}$, $I_2=\{3,4,5\}$, and $I_3=\{9,10\}$.
For $f$ any polynomial in the $\alpz$ variables, we obtain $f^{I}(\alpx)$ by setting $z_1=x_1$, $z_3=x_3$, $z_4=x_3+x_4$, $z_5=x_3+x_4+x_5$, $z_9=x_9$, and $z_{10}=x_9+x_{10}$.
Furthermore we have
\[
Q_I(\alpx)=(qx_2-x_3)\cdot (qx_6-x_7)(qx_7-x_8)(qx_8-x_9).
\]
\end{example}

If $I=\{1,\dots,r\}$ and $f=z_1^{c_1}\cdots z_r^{c_r}$, then
\[
f^{I}(\alpx)=x_1^{c_1}(x_1+x_2)^{c_2}\dots (x_1+\cdots+x_r)^{c_r}=y_{(c_1,\dots,c_r)}.
\]
In this case $Q_I(\alpx)$ is the empty product, and so equals $1$.

We are now ready to state the generalization of Theorem~\ref{th:qDS_and_K}. For this we define $\ds{\cdot}_{I}^q$ to mean the qDS of the argument over the variables $x_{i}$ for $i\in I$.

\begin{theorem}
\label{th:expansion_f}
Let $f$ be a homogeneous degree $d$ polynomial in $\alpz=(z_i)_{i\in\bZ}$.
Then the coefficient $u^*_I(f)$ of $u_{I}$ in the expansion
\[\proj(f)=\sum_{I\subset \bZ, |I|=d}u_I^*(f)u_I\] is given by
\begin{equation}
\label{eq:expansion_f}
u_I^*(f)=\frac{1}{\qfact{b_k-a_1+1}}\ds{Q_I(\alpx)\times f^I(\alpx)}_{\{a_1,a_1+1,\ldots,b_k+1\}}^q.
\end{equation}
\end{theorem}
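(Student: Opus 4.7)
My plan is to reduce Theorem~\ref{th:expansion_f} to Theorem~\ref{th:qDS_and_K} by combining linearity in $f$ with an iterated application of Lemma~\ref{lem:petrov_recursion}. By linearity, it suffices to prove the identity when $f = z^c$ for a single $\bN$-vector $c$ with $|c| = d$. I first discharge the case $\supp(c) \not\subseteq I$: on the left, every squarefree monomial $u_J$ appearing in the reduction of $\pi(z^c)$ has $J \supseteq \supp(c)$ (the Klyachko relations \eqref{eq:defining_q_klyachko} only enlarge the support), so the coefficient $u^*_I(z^c)$ vanishes; on the right, $f^I = (z^c)^I$ vanishes because a factor $z_j^{c_j}$ with $j \notin I$ and $c_j > 0$ is set to zero.

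Assuming $\supp(c) \subseteq I$, decompose $c = c^{(1)} + \cdots + c^{(k)}$ with $c^{(j)} = c|_{I_j}$. Then $f^I$ factors over the blocks as $\prod_{j=1}^k y_{c^{(j)}}^{I_j}(x_{a_j},\dots,x_{b_j})$, and multiplying by $Q_I$ produces a polynomial in $\mathbf{k}[x_{a_1},\dots,x_{b_k+1}]$ with exactly the form needed for iterated application of Lemma~\ref{lem:petrov_recursion} at each bridge factor $(qx_l-x_{l+1})$. Peeling off the bridge factors from left to right, each split produces a $q$-binomial prefactor, and after all bridges have been consumed one obtains a factorization
\begin{equation*}
\ds{Q_I f^I}^q_{\{a_1,\dots,b_k+1\}} = M\cdot \prod_{j=1}^k \ds{y_{c^{(j)}}^{I_j}}^q_{\{a_j,\dots,b_j+1\}},
\end{equation*}
where $M$ is an explicit $q$-multinomial coefficient determined by the sizes $|I_j|+1$ of the blocks and the widths of the gaps.

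Next, applying Theorem~\ref{th:qDS_and_K} (after shift-invariantly identifying $\{a_j,\dots,b_j+1\}$ with $\{1,\dots,|I_j|+1\}$) to each local factor gives $\ds{y_{c^{(j)}}^{I_j}}^q_{\{a_j,\dots,b_j+1\}} = \qfact{|I_j|}\cdot u^*_{I_j}(z^{c^{(j)}})$. Finally, I will establish the factorization $u^*_I(z^c) = \prod_{j=1}^k u^*_{I_j}(z^{c^{(j)}})$: the contributions of $\pi(z^{c^{(j)}})$ to $u_I$ must live within $I_j$ (an extension into the gap or into a neighboring $I_{j'}$ would push the final support strictly outside $I$, since the $I_j$ are pairwise separated), so the product $\pi(z^c) = \prod_j \pi(z^{c^{(j)}})$ distributes cleanly through the basis $\{u_J\}$. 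Combining these three ingredients expresses both sides as identical explicit expressions in $u^*_{I_j}(z^{c^{(j)}})$.

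The main obstacle will be reconciling the scalar $M$ arising from the iterated Lemma with the prefactor $1/\qfact{b_k-a_1+1}$ and with the $\qfact{|I_j|}$ from Theorem~\ref{th:qDS_and_K}; this amounts to a $q$-identity obtained by telescoping successive $q$-binomial relations. A secondary point is to verify that Lemma~\ref{lem:petrov_recursion} is actually applicable at each split, which requires the degree balance $|c^{(j)}| = |I_j|$ for every $j$. When this balance is violated for some $j$, both sides vanish automatically: on the left, the final support cannot fill $I_j$, so $u^*_{I_j}(z^{c^{(j)}}) = 0$; on the right, the local qDS of $y_{c^{(j)}}^{I_j}$ vanishes because its degree $|c^{(j)}|$ differs from $|I_j|$, so by the iterated factorization the full qDS on the right already vanishes, consistent with the identity.
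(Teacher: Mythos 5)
Your proposal follows essentially the same route as the paper's appendix proof: reduce to $f=z^c$ by linearity; observe that $\pi(z^c)=\prod_j\pi(z^{c^{(j)}})$ forces the factorization $u^*_I(z^c)=\prod_j u^*_{I_j}(z^{c^{(j)}})$ (which the paper asserts with only a brief ``from the relations defining $\kly$''); identify each local factor via Theorem~\ref{th:qDS_and_K}; and split the global qDS into local pieces by iterating Lemma~\ref{lem:petrov_recursion} across the bridge factors of $Q_I$. So the architecture is identical.

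However, the step you explicitly defer --- ``reconciling the scalar $M$ arising from the iterated Lemma with the prefactor $1/\qfact{b_k-a_1+1}$ and with the $\qfact{|I_j|}$'' --- is not a routine telescoping identity; it is exactly where the formula, as stated, fails once $k\geq 2$. Take $f=z_1z_3$ and $I=\{1,3\}$, so $k=2$, $I_1=\{1\}$, $I_2=\{3\}$. The left side of~\eqref{eq:expansion_f} is $u^*_{\{1,3\}}(z_1z_3)=1$, since $\pi(z_1z_3)=u_1u_3$ is already a squarefree basis element. On the right, $Q_I=qx_2-x_3$, $f^I=x_1x_3$, the variable set is $\{1,2,3,4\}$, and a single application of Lemma~\ref{lem:petrov_recursion} gives $\ds{(qx_2-x_3)x_1x_3}^q_{\{1,2,3,4\}}=\qbin{4}{2}\ds{x_1}^q_{\{1,2\}}\ds{x_3}^q_{\{3,4\}}=\qbin{4}{2}$, so the claimed right side is $\qbin{4}{2}/\qfact{3}=(1+q^2)/(1+q)\neq 1$. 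The $q=1$ specialization already fails for $I=\{1,2,4\}$, $f=z_1z_2z_4$: the formula then gives $\binom{5}{3}\cdot 2/4!=5/6$ instead of $1$. Tracking the accumulated $q$-multinomial carefully (the ambient set has $N=b_k-a_1+2$ variables, which the iterated Lemma splits into pieces of sizes $|I_j|+1$ together with gap singletons), one finds $M=\qfact{N}/\prod_j\qfact{|I_j|+1}$, so the correct normalizing prefactor is $\bigl(\prod_j\qint{|I_j|+1}\bigr)/\qfact{b_k-a_1+2}$ rather than $1/\qfact{b_k-a_1+1}$; the two agree precisely when $k=1$. This is not a defect special to your sketch --- the paper's appendix proof passes over this same step --- but the $q$-identity you anticipate will close the argument is false as written, and you would have discovered this upon writing it out.
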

 We recover qDS in the sense of \S\ref{sec:qDS} when $a_1=1$ and $b_k=n-1$. We postpone the proof  to the Appendix~\ref{app:proof_technical_theorem}, and discuss an important special case.\smallskip

 Consider $f=z_1^{c_1}\cdots z_k^{c_k}$ where $c_i>0$ for $i\in [k]$. Suppose $|c|=r$.
 We can naturally think of $c$ as an $\bN$-vector in $\wc{r}{r}$.
 From equation~\eqref{eq:expansion_f}, one infers that $u_I^{*}(f)= 0$ for  all $I\subset \bZ$ with cardinality $r$ unless $I$ is an interval containing $[k]$.
 So to express $\proj(f)$ in the basis $\mcbk$ we only need to compute $u_{I_j}^{*}(f)$ for the $r-k+1$ intervals $I_j\coloneqq [j,j+r-1]$ for $k+1-r\leq j\leq 1$. For such  intervals, $Q_{I_j}$ equals $1$, and
 \[
 f^{I_j}(\alpx)=\prod_{1\leq i\leq k}(x_j+x_{j+1}+\cdots+x_i)^{c_i}.
 \]
Now, $u_{I_j}^*(f)$ is obtained by applying qDS to this expression, and we can reindex to get
 \[
 u_{I_j}^*(f)=\frac{1}{\qfact{r}}\ds{\prod_{0\leq j\leq r-k}(x_1+\dots+x_{j+1})^{c_1}\cdots (x_1+\dots+x_{j+k})^{c_k}}_{[1,n]},
 \]
and the right-hand side is readily recognized as $A_{(0^j,c,0^{r-j-k})}(q)\times \frac{1}{\qfact{r}}$. 

To summarize this discussion, we have that if $c$ is a strong composition of $r$, there holds in $\kly$
\begin{align}
\label{eq:important for connected remixed}
u_1^{c_1}\cdots u_k^{c_k}=\frac{1}{\qfact{r}}\sum_{0\leq j\leq r-k}A_{(0^j,c,0^{r-j-k})}(q)u_{[1-j,r-j]}.
\end{align}

\begin{remark}
We have stated this result here to illustrate~\ref{th:expansion_f}, but it can be proved without it. It is indeed a consequence of the interpretation of $A_c(q)$ via Theorem~\ref{th:qDS_and_K} together with the shift-invariance of the relations of $\kly$.
\end{remark}

\section{Remixed Eulerian numbers}
\label{sec:remixed}

In this section we give certain properties of positivity of the numbers $A_c^q$. First we show that when $q$ is a positive real number, these give certain probabilities in a certain diffusion process on $\bZ$. We then show that as polynomials in $q$, $A_c^q$ has nonnegative integer coefficients.

A much more in-depth study of these polynomials is given by the authors in \cite{NT21_remixed}.

\subsection{A probabilistic process after Diaconis-Fulton}

Consider the integer line $\bZ$ as a set of \emph{sites}.
A \emph{configuration} is an $\bN$-vector $c=(c_i)_{i\in \bZ}$ with $\sum_ic_i<\infty$ as before, which we now visualize as a finite set of particles, where there are $c_i$ particles stacked at site $i$. The configuration $c=(\dots,0,3,0,c_0=1,1,2,0,2,0,\dots)$ is represented below. \emph{Stable} configurations are those for which $c_i\leq 1$ for all $i$, and are naturally identified with finite subsets of $\bZ$ via their support.

\begin{center}
\includegraphics{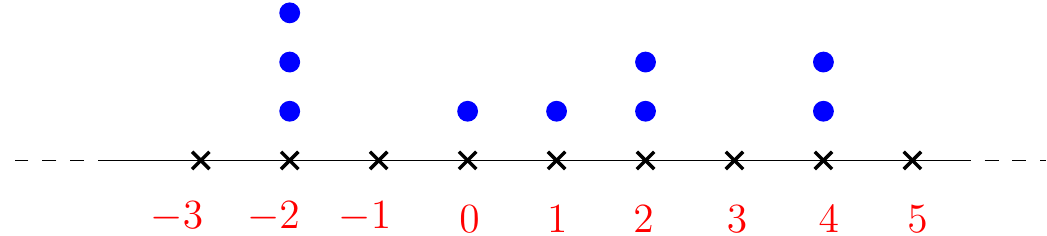}
\end{center}

Given \emph{jump probabilities} $q_L,q_R\geq 0$ satisfying $q_L+q_R=1$, we consider the following process with state space the set of configurations: Suppose we are in a configuration $c$. If $c$ is stable, the process stops. Otherwise, pick any $i$ such that $c_i>1$ and move the top particle at site $i$ to the top of site $i-1$  with probability $q_L$, and to the top of site $i-1$   (resp. $i+1$)(resp. $q_R$). The resulting configurations are $L_i(c)$ and $R_i(c)$ respectively, as seen already in this work.

Let us assume here that we perform such a jump every unit of time while the process hasn't stabilized. To specify the process completely, one needs to give a rule (deterministic or probabilistic) on how to pick the site $i$ in case there are several sites with more than one particle. It turns out that for our purposes it will not matter, as we now explain.

 Note first that whichever rule is chosen, it is easy to show that the process will end up in a stable configuration with probability $1$. We can then define $\prob_c(I)$ to be the probability that the process, according to the chosen rules and  starting a configuration $c$, ends up in the stable configuration given by the finite subset $I$ of $\bZ$. Diaconis and Fulton~\cite{DiaFul91} showed a strong \emph{abelian property} of this model, namely that \emph{$\prob_c(I)$ is independent of the chosen rules for picking the jumping particle}. 

Assume $q_R>0$ and let $q\coloneqq q_L/q_R$, so that $q_L=\frac{q}{1+q}$ and $q_R=\frac{1}{1+q}$. We have then the following result:
\begin{proposition}
\label{prop:probabilities as coefficients}
Fix an $\bN$-vector $c$. Then we have the following expansion in $\kly$ in the basis $\mcbk$:
\[
u^c=\sum_{\text{finite }I\subset\bZ} \prob_c(I) u_I.
\]
In particular, if $c\in\wc{r}{r}$, then
\[
\prob_c([r])=\frac{A^q_c}{\qfact{r}}.
\]
\end{proposition}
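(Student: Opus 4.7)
The plan is to match the recursion for the coefficients of $u_I$ in the squarefree expansion of $u^c$ in $\kly$ against the recursion for $\prob_c(I)$, and run the induction on $e(c)=|c|-|\supp(c)|$. The base case $e(c)=0$ is immediate: $u^c=u_{\supp(c)}$ in $\kly$, while the probabilistic process started from a stable configuration has already terminated, so $\prob_c(I)=\delta_{I,\supp(c)}$.

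For the inductive step, suppose some $c_i\geq 2$, and let $[a,b]$ denote the maximal interval of $\supp(c)$ containing $i$, so that $c_{a-1}=c_{b+1}=0$. The algebraic side is supplied by equation~\eqref{eq:recurrence_uc}: dividing through by $\qint{b-a+2}$ yields
\[
u^c \;=\; \frac{q^{i-a+1}\qint{b-i+1}}{\qint{b-a+2}}\,u^{L_i(c)} \;+\; \frac{\qint{i-a+1}}{\qint{b-a+2}}\,u^{R_i(c)},
\]
with both $L_i(c)$ and $R_i(c)$ having $e$-statistic equal to $e(c)-1$. For the probabilistic side, I would invoke the abelian property of the Diaconis--Fulton dynamics to adopt a convenient processing strategy: release a single particle from site $i$ and let it perform a nearest-neighbour random walk (with leftward probability $q_L=q/(1+q)$) until it lands on an empty site, before touching any other unstable site. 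The occupied sites around $i$ form precisely the block $[a,b]$, so the walker is absorbed either at $a-1$ or at $b+1$, producing exactly the configurations $L_i(c)$ and $R_i(c)$. A standard gambler's ruin computation on the interval $[a-1,b+1]$ shows that the absorption probabilities are $q^{i-a+1}\qint{b-i+1}/\qint{b-a+2}$ and $\qint{i-a+1}/\qint{b-a+2}$ respectively, matching the algebraic coefficients term by term. Hence
\[
\prob_c(I) \;=\; \frac{q^{i-a+1}\qint{b-i+1}}{\qint{b-a+2}}\,\prob_{L_i(c)}(I) \;+\; \frac{\qint{i-a+1}}{\qint{b-a+2}}\,\prob_{R_i(c)}(I),
\]
and the inductive hypothesis applied to $L_i(c)$ and $R_i(c)$ yields $u^c=\sum_I \prob_c(I)\,u_I$.

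For the final assertion, fix $c\in\wc{r}{r}$. By the first part, $\prob_c([r])$ equals the coefficient of $u_{[r]}$ in the squarefree expansion of $u^c$ in $\kly$. Passing to the quotient $\kly_{r+1}$ only kills coefficients indexed by subsets meeting $\bZ\setminus[r]$, and since $[r]$ is the unique size-$r$ subset of $\{1,\ldots,r\}$, this coefficient coincides with the coefficient of $u_{[r]}$ in $\projmorph_{r+1}(y_c)=u^c$ viewed in $\kly_{r+1}$. Applying Theorem~\ref{th:qDS_and_K} to $f=y_c$ with $n=r+1$ identifies this quantity with $\ds{y_c}_{r+1}^q/\qfact{r}=A_c^q/\qfact{r}$.

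The principal obstacle is the passage from the literal single-step rule of the Diaconis--Fulton dynamics (where each time step moves one particle by one site) to the cascaded ``release a single walker'' reformulation employed above. This reformulation is precisely the content of the abelian property of the Diaconis--Fulton model, which is the one genuinely non-algebraic ingredient in the argument; once it is granted, everything reduces to the elementary gambler's ruin computation and a routine induction on $e(c)$.
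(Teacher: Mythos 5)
Your proof is correct and lands in the same overall paradigm as the paper's: identify the transition recursion that $\prob_c(\cdot)$ satisfies, observe that the squarefree-basis coefficients of $u^c$ satisfy the very same recursion, and induct on $e(c)=|c|-|\supp(c)|$ down to the base case of a stable configuration. The tactical difference is which recursion you run. The paper works with the \emph{single-step} transition: it uses the abelian property only to justify selecting the chosen site $i$ to topple, writes $\prob_c(I)=q_L\prob_{c-e_i+e_{i-1}}(I)+q_R\prob_{c-e_i+e_{i+1}}(I)$, matches this against the defining relation~\eqref{eq:defining_q_klyachko} divided by $q+1$, and then invokes the uniqueness argument of Theorem~\ref{thm:uniqueness_via_recurrence_petrov} to close the induction (since a single hop need not decrease $e$; the uniqueness argument there itself deduces the bulk relation~\eqref{eq:recurrence_A} to get $e$ to drop). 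You instead bypass this by running the \emph{bulk} recursion~\eqref{eq:recurrence_uc} directly on both sides: on the algebra side you quote the corollary, and on the probability side you invoke the abelian property more aggressively to cascade the displaced particle through the occupied block $[a,b]$ until it exits at $a-1$ or $b+1$, recovering the gambler's ruin coefficients. This makes the induction on $e(c)$ run immediately (both $L_i(c)$ and $R_i(c)$ have $e$-value $e(c)-1$) and is slightly more self-contained, at the modest cost of needing the "track a single cascading walker" consequence of abelianness rather than the plain site-choice freedom. Your final passage from $\kly$ to $\kly_{r+1}$ and the appeal to Theorem~\ref{th:qDS_and_K} to identify $\prob_c([r])$ with $A_c^q/\qfact{r}$ is correct and spelled out a bit more carefully than the paper's terse "and the proof is complete."
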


In this probabilistic interpretation, we recognize Lemma \ref{lem:mult_by_ui} as the {\em gambler's ruin}. Namely, the coefficients on the right hand side compute the left and right exit probabilities of an asymmetric random walker on the integer segment $[a,b]$, starting from $i$.

\begin{proof}[Proof of Lemma~\ref{prop:probabilities as coefficients}]
If $c$ is already stable, there is nothing to show as $u^c=u_{\supp(c)}\in \mcbk$, and $\prob_c(J)=\delta_{J,\supp(c)}$ for any finite $J\subset \bZ$.
Otherwise suppose $c_i>2$ for some $i\in \bZ$. We are allowed to pick $i$ to hop because of the abelian property, and then obtain immediately the following transition equation
\begin{align}
\prob_c(I)=q_L \prob_{L_i(c)}(I)+q_R\prob_{R_i(c)}(I).
\end{align}
Now the  we see that the coefficient $A^I_c$ of $u_I$ in the $\mcbk$-expansion of $u^c$ satisfies the same relation. For a fixed $I$, these relations characterize $A^I_c$ following the proof for remixed Eulerian numbers in the case $I=[r]$, and the proof is complete.
\end{proof}

The obvious parallel between the transition moves in this particle jumping process and the relations defining $\kly$ eliminates the element of surprise from our preceding result. Nevertheless, on viewing the result in isolation, it is intriguing that this simple probabilistic process computes $q$-analogues of a discrete-geometric notion \textemdash{} mixed Eulerian numbers in this case.  Our next remark frames this particle process in the appropriate context which is the work of Diaconis-Fulton \cite{DiaFul91}.

\begin{remark}
The above process is an instance of Diaconis-Fulton's \emph{growth game} \cite{DiaFul91}. For the case where finitely many particles are dropped on the origin beginning with the origin being the only inhabited site, one obtains $q$-analogues of classical Eulerian numbers (suitably normalized), and this case is also discussed in \cite{Mi20}.
\end{remark}

A  property of remixed Eulerian numbers that will come handy in the sequel is a cyclic sum rule satisfied by remixed Eulerian numbers. In the case $q=1$, this was established by Postnikov \cite{Pos09} by a geometric argument. It was later reproven by Petrov~\cite{Pet18} by a probabilistic argument which we essentially follow here.

To state the rule, we need some notation.
For $c=(c_1,\dots,c_{r})\in \wc{r}{r}$, set $\tilde{c}=(c_1,\dots,c_{r},0)$. Write $c\sim c'$ if $\tilde{c}'$ can be  obtained as a cyclic rotation of $\tilde{c}$. This is an equivalence relation on $\wc{r}{r}$, and we define
 $\mathsf{Cyc}(c)$ to be the equivalence class of $c$.
For $c=(2,0,1)$, $\mathsf{Cyc}(c)$ contains $c$ and $c'=(1,0,2)$ since $\tilde{c}=(2,0,1,0)\sim (1,0,2,0)=\tilde{c}'.$

\begin{proposition}
\label{prop:cyclic sum remixed}
For $c\in \wc{r}{r}$, we have
\[
\sum_{c'\in \mathsf{Cyc}(c)}A_{c'}(q)=\qfact{r}.
\]
\end{proposition}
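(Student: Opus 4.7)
The plan is to pass to the probabilistic interpretation of Proposition~\ref{prop:probabilities as coefficients}: dividing by $\qfact{r}$, the claim becomes
\[
\sum_{c'\in\mathsf{Cyc}(c)}\prob_{c'}([r])=1,
\]
which asserts that the probabilities, summed over rotations of $\widetilde{c}$ ending in $0$, of stabilizing exactly on $[r]$ in the linear diffusion on $\bZ$ add up to one. Following Petrov~\cite{Pet18} in spirit (he treats $q=1$), the strategy is to establish this for positive real $q$, whence the polynomial identity then follows because both $A_{c'}(q)$ and $\qfact{r}$ lie in $\bZ[q]$.

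The argument runs through a companion \emph{cyclic} Diaconis--Fulton process on $\bZ/(r+1)\bZ$, with the same jump probabilities $q_L=q/(q+1)$ and $q_R=1/(q+1)$. By the abelian property~\cite{DiaFul91} this process stabilizes almost surely, and since $r$ particles live on $r+1$ sites, the stable configuration has exactly one empty site $E$, giving tautologically $\sum_{j\in\bZ/(r+1)\bZ}\Pr_{\widetilde{c}}[E=j]=1$. The central observation is a \emph{monotonicity of occupations}: once a site carries at least one particle, it continues to do so forever, because firings at $i$ require $N_i\ge 2$ and decrease $N_i$ by exactly one, while firings elsewhere can only feed into $i$. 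This has two consequences: on the cycle, $\Pr_{\widetilde{c}}[E=j]=0$ whenever $\widetilde{c}_j>0$; on the line, the event $\{I=[r]\}$ is equivalent to ``no particle ever visits $\bZ\setminus[r]$'' (since any boundary visit would leave a particle outside $[r]$ permanently).

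Running the linear and cyclic processes under a common coupling (fire the same site with the same random direction at each step, which is legitimate by the abelian property) shows that their trajectories agree up to the first boundary crossing, and that such a crossing occurs simultaneously in both or in neither. It follows that for $\widetilde{c}'=(c',0)$ with $c'\in\wc{r}{r}$, one has the crucial identity
\[
\prob_{c'}([r])=\Pr_{\widetilde{c}'}[E=r+1].
\]
Combining with the rotational symmetry of the cyclic dynamics, and noting that $\gcd(r,r+1)=1$ forces $\widetilde{c}$ to have trivial cyclic stabilizer (so the $r+1$ rotations of $\widetilde{c}$ are all distinct), each element $c'\in\mathsf{Cyc}(c)$ corresponds bijectively to a position $j$ with $\widetilde{c}_j=0$, yielding
\[
\sum_{c'\in\mathsf{Cyc}(c)}\prob_{c'}([r])=\sum_{j:\widetilde{c}_j=0}\Pr_{\widetilde{c}}[E=j]=\sum_{j\in\bZ/(r+1)\bZ}\Pr_{\widetilde{c}}[E=j]=1,
\]
where the middle equality discards terms with $\widetilde{c}_j>0$ via the monotonicity consequence.

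The main obstacle is the coupling step: the linear process on $\bZ$ and the cyclic process on $\bZ/(r+1)\bZ$ genuinely differ at the boundary, so the identity $\prob_{c'}([r])=\Pr_{\widetilde{c}'}[E=r+1]$ must be justified carefully. The abelian property is the right tool here, as it lets us schedule any firing that would cross the boundary last, making the equivalence of the ``no-boundary-visit'' events on the two sides transparent once such a firing does occur.
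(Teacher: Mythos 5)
Your proof is correct and follows essentially the same route as the paper's (which, as you guessed, adapts Petrov's $q=1$ argument): reduce to real $q>0$ via the probabilistic interpretation, run the same particle dynamics on the cycle of $r+1$ sites, use the abelian property and rotational symmetry, and identify the cyclic probability of leaving site $r+1$ empty with the linear probability $\prob_{c'}([r])$. Your write-up is more careful than the paper's at three points the paper treats informally \textemdash{} the monotonicity of occupations (which both rules out $\Pr[E=j]$ for $\tilde c_j>0$ and reduces $\{I=[r]\}$ to a no-boundary-crossing event), the coupling justifying the cyclic-to-linear identity, and the $\gcd(r,r+1)=1$ observation ensuring the $r+1$ rotations of $\tilde c$ are all distinct so that $j\mapsto c'$ is a genuine bijection onto $\mathsf{Cyc}(c)$ \textemdash{} but the underlying argument is the same.
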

\begin{proof}
As this is a polynomial relation, it is enough to prove it when $q$ is a nonnegative real. Consider the same dynamic as above, but on a circular set of sites labeled $1$ through $r+1$ clockwise. Fix $c\in \wc{r}{r}$ and start the process with the configuration of $c_i$ particles at site $i$ for $1\leq i\leq r$, and $0$ at site $r+1$. The process stabilizes with probability one and has the abelian property, as it is still an instance of the processes considered in \cite{DiaFul91}.

In a stable configuration, exactly one of the $r+1$ sites will be unoccupied. By rotational symmetry, the probability $P_j$ of ending at the stable configuration supported on $[r+1]\setminus \{j\}$ is equal to the probability of reaching the stable configuration supported on $[r]$ but starting from the configuration $c$ rotated counterclockwise $j$ times. Clearly $P_j$ is $0$ if $c_{j}>0$. If not, the rotated configuration has support in $[r]$ and so corresponds naturally to an element $c'$ in $\mathsf{Cyc}(c)$. The probability $P_j$ in this case is given by $\prob_{c'}([r])$ introduced above, since the process behaves exactly as on $\bZ$.
 
The preceding argument together with Proposition~\ref{prop:probabilities as coefficients} gives 
\begin{align}
\sum_{c'\in \mathsf{Cyc}(c)}\frac{A_{c'}(q)}{\qfact{r}}=1,
\end{align}
which is what we set out to prove.
\end{proof}

Continuing our example above, we get the following relation which can be checked from the data in Example~\ref{ex:remixed_Eulerians}
\[A_{(2,0,1)}(q)+A_{(1,0,2)}(q)=\qfact{3}.\]

A particularly interesting case of Proposition~\ref{prop:cyclic sum remixed} is when $\supp(c)$ is an interval. Indeed, the cyclic rotations do not matter and the sum in question only runs over linear shifts of $c$, and only those sequences contribute to the sum whose support lies in $[r]$. We have already encountered this in the discussion right after Theorem~\ref{th:expansion_f}, and we address this further in \S\S\ref{sub:connected} particularly as it has ramifications when we discuss Schubert polynomials.

\subsection{Positivity of coefficients}
\label{sub:positivity rmen}
In the rest of this section we consider $A_{c}(q)$ as a polynomial.

Definition~\ref{defi:remixed_Eulerian} implies immediately that $A_{c}(q)\in \mathbb{Z}[q]$. We know also that for $q=1$, we have $A_c\coloneqq A_c(1)$ is the mixed Eulerian number \cite{Pos09}. It is positive, for instance by its original interpretation as a mixed volume. More generally, for $q>0$ we now know that $A_c(q)>0$ because of the probabilistic interpretation from Proposition~\ref{prop:probabilities as coefficients}.

  What is not obvious from its definition, or even from the relations~\eqref{eq:recurrence_A}, is that the coefficients of $A_{c}(q)\in \mathbb{Z}[q]$ are actually nonnegative; indeed the presence of the factor $\qint{b-a+2}$ prevents us from concluding so immediately.

\begin{proposition}
\label{prop:mixed_positive_coeffs}
For any $c=(c_1,\ldots,c_r)$ such that $|c|=r$, we have $A_c(q)\in \mathbb{N}[q]$.
\end{proposition}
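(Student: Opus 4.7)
My plan is to prove $A_c(q) \in \bN[q]$ by strong induction on $e(c) = |c| - |\supp(c)|$. The base case $e(c) = 0$ forces $c = (1^r)$, giving $A_c(q) = \qfact{r} \in \bN[q]$. The naive recursion~\eqref{eq:recurrence_A} has weights that are \emph{rational} functions of $q$, so the first task is to derive a ``better'' recursion of the form $A_c(q) = \sum_{c'} P_{c,c'}(q)\,A_{c'}(q)$ with $P_{c,c'}(q) \in \bN[q]$ and each $c'$ strictly smaller in $e$-value.

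My first attempt is to use instead the single-step recursion from Theorem~\ref{thm:uniqueness_via_recurrence_petrov}(b), namely $(q+1)\,A_c(q) = q\,A_{c^-}(q) + A_{c^+}(q)$, where $c^{\pm} = c - e_i \pm e_{i \pm 1}$ for any $i$ with $c_i \geq 2$. If I can pick $i$ to be an \emph{isolated peak} (so $c_{i-1} = c_{i+1} = 0$), then $e(c^{\pm}) = e(c) - 1$ and by induction both $A_{c^\pm}(q) \in \bN[q]$. It then remains to check that $(1+q)$ divides $q\,A_{c^-}(q) + A_{c^+}(q)$ with the quotient in $\bN[q]$; the divisibility itself is automatic, since $A_c(q)$ is already known to be a polynomial by Definition~\ref{defi:remixed_Eulerian}, so the real question is positivity of the quotient, which for an isolated peak should reduce to a manageable comparison between the two already nonnegative polynomials on the right.

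The main obstacle is that an isolated peak need not exist: for instance $c = (2,1,0)$ has only $i = 1$ with $c_1 \geq 2$, but $c_2 = 1 \neq 0$. For such $c$ one must iterate the single-step recursion at a non-isolated site. The natural grouping is by \emph{excursions}: following one particle through consecutive single-step moves until it exits its maximal support interval $[a,b]$, exactly as in the probabilistic process behind Proposition~\ref{prop:probabilities as coefficients}. By the abelian property, the end-state distribution matches the one encoded in~\eqref{eq:recurrence_A}. The heart of the argument is thus to show that after summing a bounded-depth tree of single-step recursions, the rational coefficients collapse into weights in $\bN[q]$. The hardest part will be this final bookkeeping: I anticipate needing an induction on the refined parameter $(e(c), |\supp(c)|)$ lexicographically, combined with the standard $q$-binomial identities that govern the gambler's ruin weights $q^{i-a+1}\qint{b-i+1}/\qint{b-a+2}$ and $\qint{i-a+1}/\qint{b-a+2}$ from~\eqref{eq:recurrence_A}, and possibly appealing to the cyclic sum rule (Proposition~\ref{prop:cyclic sum remixed}) to close the induction by relating $A_c(q)$ to the $A_{c'}(q)$'s in its cyclic class.
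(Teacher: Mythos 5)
Your plan is close in spirit to the paper's proof — you correctly identify that the naive single-step recursion is inadequate, that one should instead iterate through ``excursions'' matching the gambler's-ruin recursion~\eqref{eq:recurrence_A}, and that the obstruction is the rational weights $q^{i-a+1}\qint{b-i+1}/\qint{b-a+2}$ and $\qint{i-a+1}/\qint{b-a+2}$. But you are missing the one idea that makes the bookkeeping close: the paper normalizes by the $q$-factorials of the maximal intervals of the support. Writing $\supp(c)=I_1\sqcup\cdots\sqcup I_k$ with $m_j=|I_j|$, it proves the \emph{stronger} statement that $\tilde A_c(q):=A_c(q)/\prod_j\qfact{m_j}\in\bN[q]$. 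Dividing~\eqref{eq:recurrence_A} through by $\qint{m_j+1}\prod_p\qfact{m_p}$, the denominator $\qint{b-a+2}=\qint{m_j+1}$ is absorbed, and the change in the interval structure of the support (when $L_i(c)$ or $R_i(c)$ merges two maximal intervals) produces a ratio of $q$-factorials that is exactly a $q$-binomial coefficient. So after normalization the recursion has coefficients that are products of a $q$-power, a $q$-integer, and either $1$ or a $q$-binomial — all in $\bN[q]$ — and induction on $e(c)$ goes through cleanly with no divisions at all. Without this renormalization your ``collapse'' of rational weights into $\bN[q]$ does not happen: for $a=1$, $b=2$, $i=2$ the weight $q^{2}\qint{1}/\qint{3}=q^2/(1+q+q^2)$ is not even a polynomial.

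There is also a genuine gap in your isolated-peak step. You argue that since $A_c(q)$ is a polynomial, $(1+q)\mid\bigl(qA_{c^-}(q)+A_{c^+}(q)\bigr)$, which is fine; but you then assert that positivity of the quotient should follow by a ``manageable comparison'' of the two nonnegative polynomials on the right. That inference is false in general: $1+q^3$ has nonnegative coefficients, is divisible by $1+q$, yet the quotient $1-q+q^2$ does not lie in $\bN[q]$. So a priori divisibility buys you nothing about coefficient signs, and you would still need an independent positivity argument at that step — which is precisely what the renormalization supplies. Also note that single-step moves $c\mapsto c^\pm$ at non-isolated sites do not decrease $e(c)$ at all (when $c_{i\pm1}>0$, both $|c|$ and $|\supp(c)|$ are unchanged), so the induction on $e(c)$ by single steps is not well founded; one has to package the full excursion to the boundary of the maximal interval, i.e.\ use~\eqref{eq:recurrence_A}. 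Finally, the cyclic sum rule is not needed in the paper's argument.
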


\begin{proof}
Let $I_1\sqcup I_2\sqcup \ldots \sqcup I_k$ denote the maximal interval decomposition of $\supp(c)$, and suppose that $m_j\coloneqq |I_j|$ for $1\leq j\leq k$.
Define
\[\tilde{A}_{c}(q)\coloneqq A_{c}(q)/\prod_i \qfact{m_i}.\]
We will prove the stronger claim that $\tilde{A}_{c}(q)\in \mathbb{N}[q]$, which of course implies the  proposition since the $q$-factorials have nonnnegative coefficients. 
Suppose first that $c_i\leq 1$ for all $i$. The only possibility is $c=(1^r)$ in which case $\tilde{A}_{c}(q)=A_c(q)/\qfact{r}=1$ by Theorem~\ref{thm:uniqueness_via_recurrence_petrov}.

Now consider $i$ such that $c_i\geq 2$, and suppose it belongs to the maximal interval $I_j=[a,b]$.  Note that $m_j+1=b-a+2$.
Now note that ~\eqref{eq:recurrence_A} says
\begin{align}
\qint{m_j+1}A_{c}(q)=q^{i-a+1}\qint{b-i+1}A_{L_i(c)}(q)+\qint{i-a+1}A_{R_i(c)}(q),
\end{align}
where $L_i(c)$ and $R_i(c)$ are defined as in the proof of Theorem~\ref{thm:uniqueness_via_recurrence_petrov}.
By dividing both sides of this equation  by $\qint{m_{j}+1}\prod_{1\leq p\leq k} \qfact{m_p}$, we obtain
\begin{equation}
\label{eq:recurrence_Areduced}
\tilde{A}_{c}(q)=b_L\;q^{i-a+1}\qint{b-i+1}\;\tilde{A}_{L_i(c)}(q)+b_R\;\qint{i-a+1}\;\tilde{A}_{R_i(c)}(q).
\end{equation}
Here $b_L$ and $b_R$ are determined by how $\supp(L_i(c))$, $\supp(R_i(c))$ and $\supp(c)$ are related, and we perform this analysis next.

If $a=1$, then $L_i(c)$ is not defined and the corresponding term is omitted.
If $a=2$ or $c_{a-2}=0$, then the maximal interval decomposition of $\supp(L_i(c))$ is obtained by replacing $I_j$ by ${\{a-1\}}\cup I_j$.
Note that the cardinality of the latter is $m_j+1$, and therefore we infer that $b_L=1$.
If we are not in either of the preceding two scenarios, then we have $I_{j-1}=[f,a-2]$ for some $f\geq 1$, so that $[f,b]$ is a maximal interval in the support of $L_i(c)$. In this case, we have $b_L=\qbin{b-f+1}{b-a+2}$.

 Symmetrically, if $b=1$, then $R_i(c)$ is not defined and the corresponding term is omitted.
 If  $b=r-1$ or $c_{b+2}=0$, then $b_R=1$.
 Otherwise we have $I_{j+1}=[b+2,g]$ for some $g\leq r$, so that $[a,g]$ is a maximal interval in  $\supp(R_i(c))$. In this case, we have $b_R=\qbin{g-a+1}{b-a+2}$.

Thus equation~\eqref{eq:recurrence_Areduced} gives a recurrence for $\tilde{A}_{c}(q)$ which clearly shows that it has nonnegative coefficients.
\end{proof}
The cyclic sum rule for remixed Eulerians descends to one for the reduced analogues.
\begin{corollary}
\label{cor:cyclic sum reduced}
Fix $c\in \wc{r}{r}$. Suppose $\supp(c)$ decomposes as a disjoint union of \textup{(}inclusion\textup{)} maximal intervals as $I_1 \sqcup \cdots \sqcup I_k$. Set $m_j=|I_j|$.
\[
\sum_{c'\in \mathsf{Cyc}(c)}\tilde{A}_{c'}(q)=\frac{\qfact{n-1}}{\prod_{1\leq j\leq k}\qfact{m_j}}.
\]
\end{corollary}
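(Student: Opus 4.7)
The plan is to reduce the corollary to Proposition~\ref{prop:cyclic sum remixed} by showing that the denominator $\prod_j \qfact{m_j}$ appearing in the definition of $\tilde{A}_{c'}(q)$ is actually the same for every $c'\in \mathsf{Cyc}(c)$. Once that is established, the denominator factors out of the sum and Proposition~\ref{prop:cyclic sum remixed} finishes the job.

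The key observation is the following invariance. View $\tilde{c}=(c_1,\dots,c_r,0)$ as a function on the cycle $\bZ/(r+1)\bZ$, and call a \emph{cyclic arc} of $\tilde{c}$ a maximal connected (in the cyclic sense) subset of $\supp(\tilde{c})$. Since $\tilde{c}_{r+1}=0$, position $r+1$ does not lie in any cyclic arc, and consequently the cyclic arcs of $\tilde{c}$ coincide with the (ordinary) maximal intervals $I_1,\dots,I_k$ of $\supp(c)$ inside $[1,r]$. Now for any $c'\in \mathsf{Cyc}(c)$, by definition $\tilde{c}'$ is a cyclic rotation of $\tilde{c}$, and we have arranged for $\tilde{c}'$ to end in $0$ as well. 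Rotation preserves the multiset of cyclic arc sizes, so the cyclic arcs of $\tilde{c}'$ have sizes $\{m_1,\dots,m_k\}$; and since position $r+1$ of $\tilde{c}'$ is again $0$, these cyclic arcs are precisely the linear maximal intervals of $\supp(c')$. Hence the maximal interval sizes of $\supp(c')$ form the same multiset $\{m_1,\dots,m_k\}$ as for $c$.

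With this invariance in hand, the proof reads:
\begin{align*}
\sum_{c'\in \mathsf{Cyc}(c)}\tilde{A}_{c'}(q)
\;=\;\sum_{c'\in \mathsf{Cyc}(c)}\frac{A_{c'}(q)}{\prod_{j=1}^{k}\qfact{m_j}}
\;=\;\frac{1}{\prod_{j=1}^{k}\qfact{m_j}}\sum_{c'\in \mathsf{Cyc}(c)} A_{c'}(q)
\;=\;\frac{\qfact{r}}{\prod_{j=1}^{k}\qfact{m_j}},
\end{align*}
where the first equality uses the invariance just proved, and the third is Proposition~\ref{prop:cyclic sum remixed}.

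There is no real obstacle in this argument; the only point that deserves care is the cyclic-versus-linear interval bookkeeping, which is completely combinatorial and hinges on the fact that we have chosen cyclic representatives $c'$ whose augmentation $\tilde{c}'$ ends in $0$. No further properties of the polynomials $A_c(q)$ beyond Proposition~\ref{prop:cyclic sum remixed} (and the definition of $\tilde{A}_c(q)$ from Proposition~\ref{prop:mixed_positive_coeffs}) are needed.
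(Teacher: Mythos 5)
Your proof is correct, and it is precisely the argument the paper intends when it says the cyclic sum rule "descends" to the reduced analogues (the paper gives no explicit proof). The only subtlety is exactly the one you flag: since every $\tilde{c}'$ with $c'\in\mathsf{Cyc}(c)$ has last entry $0$, the cyclic arcs of $\tilde{c}'$ on $\bZ/(r+1)\bZ$ never wrap around, hence coincide with the linear maximal intervals of $\supp(c')$, and cyclic rotation preserves the multiset of arc lengths; so the normalizing factor $\prod_j\qfact{m_j}$ in the definition of $\tilde{A}_{c'}(q)$ is constant over the class and pulls out of the sum in Proposition~\ref{prop:cyclic sum remixed}. (Incidentally, the $\qfact{n-1}$ in the corollary's statement should read $\qfact{r}$, as your computation gives.)
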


\subsection{Connected remixed Eulerian numbers}
\label{sub:connected}
We now return to the case where $c$ is an $\bN$-vector such that $\supp(c)$ is an interval in $\bZ$.
Our result here is an alternative encoding of the coefficients in the expansion in equation~\eqref{eq:important for connected remixed}.
In fact, what we obtain is a $q$-analogue of a recent result of Berget-Spink-Tseng \cite[Theorem A.1]{Ber20} in their investigation of log-concavity of matroid $h$-vectors.

\begin{proposition}
\label{prop:connected_men_gf}
Let $c=(c_1,\ldots,c_k)\vDash r$. We have
\begin{align}
  \label{eq:u^c principal}
\sum_{m\geq0}t^m\prod_{i=1}^k\qint{m+i}^{c_i}=\frac{\displaystyle\sum_{i=0}^{r-k}A_{0^ic0^{r-k-i}}(q) t^i}{(t;q)_{r+1}}.
\end{align}
\end{proposition}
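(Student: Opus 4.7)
The plan is to lift the desired identity to $\kly$ via formula~\eqref{eq:important for connected remixed}, project to $\kly_+$, evaluate under the specialization $\spec$ of Proposition~\ref{prop:specialization}, and then sum against $t^m$ using Cauchy's $q$-binomial theorem~\eqref{eq:cauchy_2}.

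Since $c=(c_1,\ldots,c_k)\vDash r$ is a strong composition, equation~\eqref{eq:important for connected remixed} already gives an expansion of $u_1^{c_1}\cdots u_k^{c_k}$ in $\kly$ in the basis $\mcbk$. I will apply the shift automorphism $\tau^m$ for each $m\geq 0$ to obtain, still in $\kly$,
\[
u_{m+1}^{c_1}\cdots u_{m+k}^{c_k}=\frac{1}{\qfact{r}}\sum_{j=0}^{r-k} A_{(0^j,c,0^{r-k-j})}(q)\,u_{[m+1-j,\,m+r-j]}.
\]
Projecting this identity to $\kly_+$, the basis element $u_{[m+1-j,m+r-j]}$ vanishes precisely when $m+1-j\leq 0$, so only the terms with $j\leq m$ survive. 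Applying $\spec$ turns the left side into $\prod_{i=1}^k\qint{m+i}^{c_i}$ and each surviving $u_{[m+1-j,m+r-j]}$ into $\qfact{m+r-j}/\qfact{m-j}$.

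I then multiply by $t^m$ and sum over $m\geq 0$. Swapping the order of summation, the contribution of a fixed $j\in\{0,\ldots,r-k\}$ becomes
\[
A_{(0^j,c,0^{r-k-j})}(q)\sum_{m\geq j}t^m\frac{\qfact{m+r-j}}{\qfact{m-j}}=A_{(0^j,c,0^{r-k-j})}(q)\,t^j\,\qfact{r}\sum_{n\geq 0}\qbin{n+r}{n}t^n,
\]
where I have substituted $n=m-j$ and written $\qfact{n+r}/\qfact{n}=\qfact{r}\qbin{n+r}{n}$. Invoking Cauchy's identity~\eqref{eq:cauchy_2} with parameter $r+1$, the remaining sum equals $1/(t;q)_{r+1}$, and the $\qfact{r}$ factors cancel to yield exactly~\eqref{eq:u^c principal}.

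I expect the only real subtlety to be the index bookkeeping around the vanishing conditions when passing from $\kly$ to $\kly_+$, but this dovetails neatly with the natural truncation of the summation range to $\{0,\ldots,\min(m,r-k)\}$; everything else is a routine reorganization followed by a one-line invocation of a classical $q$-series identity.
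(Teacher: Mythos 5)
Your proof is correct and follows essentially the same route as the paper's: lift the identity to $\kly$ via \eqref{eq:important for connected remixed}, apply the shift $\tau^m$, specialize via $\spec$ to get a polynomial identity, and invoke Cauchy's $q$-binomial theorem \eqref{eq:cauchy_2}. The only cosmetic difference is that you sum explicitly over $m$ and swap the order of summation, whereas the paper phrases the final step as extracting the coefficient of $t^m$ on both sides of the specialized identity.
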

\begin{proof}

Fix $m\geq 0$. Using the shift invariance of the relations defining $\kly$, we can expand $u_{m+1}^{c_1}\dots u_{m+k}^{c_k}=
\tau^m(u_1^{c_1}\cdots u_k^{c_k})$ in the basis $\mcbk$ of $\kly$ by shifting indices on the right-hand side in equation~\eqref{eq:important for connected remixed}.
We thus obtain
\begin{align}
\qfact{r}u_{m+1}^{c_1}\dots u_{m+k}^{c_k}=
\sum_{j=0}^{r-k}A_{0^{j}c0^{r-j-k}}(q)u_{[1,r]+(m-j)}.
\end{align}
Via the specialization $\spec:u_i\mapsto \qint{i}$ for $i>0$, see \ref{prop:specialization}, we obtain the polynomial identity
\[\qint{m+1}^{c_1}\dots \qint{m+k}^{c_k}=\sum_{j=0}^{\min(m,r-k)}A_{0^jc0^{r-k-j}}(q)\qbin{r+m-j}{m-j}.\]
By Cauchy's $q$-binomial theorem \eqref{eq:cauchy_2}, the right-hand side is the coefficient of $t^m$ in $\frac{\sum_{i=0}^{r-k}A_{0^ic0^{r-k-i}}(q) t^i}{(t;q)_n}$, which completes the proof.
\end{proof}

We briefly touch upon various instances of connected remixed Eulerian numbers in disguise in literature. As elaborated in \cite[\S3]{Gessel_Survey}, given positive integers $p_1$ through $p_m$, MacMahon considered the $\mathrm{maj}$-$\dsc$ distribution over the set of  permutations $M_{(p_1,\dots,p_m)}$ of the multiset $\{1^{p_1},\dots,m^{p_m}\}$. Suppose $r=p_1+\cdots+p_m$. MacMahon showed \cite[Eq. (3.3)]{Gessel_Survey} that
\begin{align}
\sum_{n\geq 0}t^n\prod_{1\leq i\leq m}\qbin{n+p_i}{p_i}=\frac{\sum_{\pi\in M_{(p_1,\dots,p_m)}}t^{\dsc(\pi)}q^{\mathrm{maj}(\pi)}}{(t;q)_{r+1}}.
\end{align}
By setting $c_i\coloneqq |\{j\suchthat p_j\geq i\}|$ and $k\coloneqq \max\{p_1,\dots,p_m\}$, one can rewrite this as
\begin{align}
\sum_{n\geq 0}t^n\prod_{1\leq i\leq k}\qint{n+i}^{c_i}=\frac{\prod_{1\leq i\leq m}\qfact{p_i}\left(\sum_{\pi\in M_{(p_1,\dots,p_m)}}t^{\dsc(\pi)}q^{\mathrm{maj}(\pi)}\right)}{(t;q)_{r+1}}.
\end{align}
Note that we must have $c_1\geq c_2\geq\cdots\geq c_k>0$. Also given any such partition $(c_1,\dots,c_k)$, one can always associate a corresponding $(p_1,\dots,p_m)$. Thus we see that for partitions $c=(c_1,\dots,c_k)$, the polynomial $A_{0^ic0^{r-k-i}}(q)$ tracks the distribution of major index over $\pi \in M_{(p_1,\dots,p_m)}$ where $\dsc(\pi)=i$, up to scaling by $\prod_{1\leq i\leq m}\qfact{p_i}$. The case where $p_i=1$ for all $1\leq i\leq m$ is particularly well known.
In this case $c=(r)$ and one obtains
\begin{align}
A_{(0^i,r,0^{r-k-i})}(q)=\sum_{\substack{\pi\in \sgrp_r\\ \dsc(\pi)=i}} q^{\mathrm{maj}(\pi)},
\end{align}
and the right-hand side gives the $q$-Eulerian number of Carlitz \cite{Car75}.

More generally, the connected remixed Eulerian numbers appear in work of Garsia-Remmel \cite{GaRe86} as \emph{$q$-hit polynomials}.
Indeed, Equations I.11 and I.12 in loc. cit. together imply that the $A_{0^ic0^{r-k-i}}(q)$ are $q$-hit polynomials (up to a power of $q$) of certain Ferrers boards determined by $c$. Informally, this is how the correspondence goes. Consider any Dyck path from $(0,0)$ to $(r,r)$ such that its \emph{area sequence} has $c_i$ instances of $i$ for $i\in [r]$. Then it must be the case that $\supp(c)$ is an interval in $[r]$.  Such a Dyck path in turn determines a Ferrers diagram $\lambda(c)$, and computing the $q$-hit polynomial as a weighted sum over all nonattacking placements in the $r\times r$ board where $i$ rooks land inside $\lambda(c)$ gives $A_{0^ic0^{r-k-i}}(q)$.
We address this connection in more detail in \cite{NT21_remixed}.

\section{Specializations of quasisymmetric polynomials}
\label{sec:quasisymmetric}
We now aim to give the expansion of $\projmorph_+(P)=P(u_1,u_2-u_1,u_3-u_2,\dots)$ in the basis $\mcbkp$ for any {quasisymmetric} polynomial $P(x_1,\dots,x_m)$. Amongst other things, this will shed a new light on the main results of the authors in~\cite{DS} for $q=1$.  It helps to consider  the case of monomials first.

\subsection{Expansion of monomials in $u_i-u_{i-1}$}
\label{sub:monomials_x}
Consider the expansion of $\projmorph(\alpx^c)=\prod_i(u_i-u_{i-1})^{c_i}$ for $c$ any $\bN$-vector:
\begin{align}
\label{eq:expansion_monomial}
\prod_i(u_i-u_{i-1})^{c_i}=\sum_{I}\psi^c_I u_I.
\end{align}
We use Theorem~\ref{th:expansion_f} to find a formula for the coefficients $\psi^c_I$. Assume $|I|=|c|$, otherwise $\psi^c_I=0$ for degree reasons.
Let $I_1\sqcup I_2\sqcup \ldots \sqcup I_k$ be the maximal interval decomposition of $I$, with $I_j=[a_j,b_j]$. We then introduce $\widetilde{I}\coloneqq\widetilde{I}_1\sqcup \widetilde{I}_2\sqcup \ldots \sqcup \widetilde{I}_k$
where $\widetilde{I}_j=[a_j,b_j+1]$ for all $j$.

Arguing as in the proof of Theorem~\ref{th:expansion_f}, we have that $\psi^c_I =0$ unless $\supp(c)\subseteq \widetilde{I}$.

\begin{proposition}
Let $c,I$ satisfy $\supp(c)\subseteq \widetilde{I}$ and ${\sum_{i\in \widetilde{I}_j}c_i=|I_j|}$ for $j=1,\dots,k$. Then
\begin{align}
\label{eq:ds_mon_as_product}
\psi^c_I=\prod_{1\leq j\leq k}\frac{1}{\qfact{|I_j|}}\ds{x_{a_j}^{c_{a_j}}x_{a_j+1}^{c_{a_j+1}}\cdots x_{b_j}^{c_{b_j}}x_{b_j+1}^{c_{b_j+1}}}_{\{a_j,\dots,b_j+1\}}^q.
\end{align}
\end{proposition}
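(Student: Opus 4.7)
The plan is to reduce the computation of $\psi^c_I$ in $\kly$ to a product of coefficient computations, one in each subalgebra $\kly_{I_j}$, and then to apply Theorem~\ref{th:qDS_and_K} to each piece. The main tool will be an algebra projection together with a tensor-product decomposition of a well-chosen quotient of $\kly$.

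First, I would introduce the algebra projection $\pi_I:\kly\to\kly_I$ sending $u_i\mapsto u_i$ for $i\in I$ and $u_i\mapsto 0$ otherwise. It is well-defined because the defining relations are preserved, and it acts on the squarefree basis $\mcbk$ by $u_J\mapsto u_J$ if $J\subseteq I$ and $u_J\mapsto 0$ otherwise. In particular, the coefficient of $u_I$ in $\projmorph(\alpx^c)$ equals the coefficient of $u_I$ in $\pi_I(\projmorph(\alpx^c))\in\kly_I$.

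The next, and crucial, step is to establish the tensor-product decomposition $\kly_I \simeq \bigotimes_{j=1}^k \kly_{I_j}$. Every defining relation $(q+1)u_i^2=qu_iu_{i-1}+u_iu_{i+1}$ of $\kly_I$ involves only indices within the same maximal interval: if $i\in I_j$, its two neighbours either lie in $I_j$ or vanish, because the gaps between consecutive maximal intervals have size at least one. Hence the natural map $\bigotimes_j \kly_{I_j}\to\kly_I$ is a well-defined algebra morphism, surjective on generators, and an isomorphism by the dimension count $\prod_j 2^{|I_j|}=2^{|I|}$ coming from Proposition~\ref{prop:uI_basis}. Under this identification, $u_I$ corresponds to $u_{I_1}\otimes\cdots\otimes u_{I_k}$.

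Next I would compute $\pi_I(\projmorph(\alpx^c))=\prod_i(\pi_I(u_i)-\pi_I(u_{i-1}))^{c_i}$. The assumption $\supp(c)\subseteq\widetilde{I}$ guarantees that every factor with $c_i>0$ is nonzero and lies in a single $\kly_{I_j}$: for $i\in I_j\setminus\{a_j\}$ it equals $u_i-u_{i-1}$, for $i=a_j$ it equals $u_{a_j}$, and for $i=b_j+1$ it equals $-u_{b_j}$. Grouping these factors according to the index $j$ of the containing $\widetilde{I}_j$ yields
\[
\pi_I(\projmorph(\alpx^c))=\bigotimes_{j=1}^k\; u_{a_j}^{c_{a_j}}(u_{a_j+1}-u_{a_j})^{c_{a_j+1}}\cdots (u_{b_j}-u_{b_j-1})^{c_{b_j}}(-u_{b_j})^{c_{b_j+1}}.
\]
The coefficient of $u_{I_1}\otimes\cdots\otimes u_{I_k}$ therefore factors as a product over $j$. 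To identify each factor, I would invoke Theorem~\ref{th:qDS_and_K}: after the shift identifying $\kly_{I_j}$ with $\kly_{|I_j|+1}$, the $j$-th tensor factor is precisely $\projmorph_{|I_j|+1}(x_{a_j}^{c_{a_j}}\cdots x_{b_j+1}^{c_{b_j+1}})$, so its coefficient of $u_{I_j}$ equals $\frac{1}{\qfact{|I_j|}}\ds{x_{a_j}^{c_{a_j}}\cdots x_{b_j+1}^{c_{b_j+1}}}_{\widetilde{I}_j}^q$. Multiplying over $j$ yields~\eqref{eq:ds_mon_as_product}. The main obstacle is the tensor-product decomposition of $\kly_I$; once that structural lemma is in hand, the remainder is a routine application of the projection and Theorem~\ref{th:qDS_and_K}.
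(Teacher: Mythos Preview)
Your proof is correct. The paper does not give a standalone proof of this proposition; it simply says ``We use Theorem~\ref{th:expansion_f}'' and refers to arguing ``as in the proof of Theorem~\ref{th:expansion_f}'' (Appendix~\ref{app:proof_technical_theorem}). Your argument essentially reproduces the first half of that appendix proof, specialized to $f=\morph(\alpx^c)$ rather than to $\alpz^c$, and it does so more cleanly.

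The one genuine difference is organizational. The paper's route to~\eqref{eq:ds_mon_as_product} goes through the single qDS formula of Theorem~\ref{th:expansion_f} (involving $Q_I$) and then factors it via repeated use of Lemma~\ref{lem:petrov_recursion}; in each block one then replaces $-(x_{a_j}+\cdots+x_{b_j})$ by $x_{b_j+1}$ modulo symmetric polynomials. You sidestep all of this by making the tensor decomposition $\kly_I\simeq\bigotimes_j\kly_{I_j}$ explicit and applying Theorem~\ref{th:qDS_and_K} directly on each factor. This is exactly what the appendix does implicitly when it writes ``from the relations defining $\kly$ it follows that $p_c^I=\prod_j p_{c^{I_j}}^{I_j}$''; you have simply given that sentence an honest justification via the dimension count from Proposition~\ref{prop:uI_basis}. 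Your route is shorter and avoids invoking the more general Theorem~\ref{th:expansion_f}, at the modest cost of not exhibiting the single-qDS form with the $Q_I$ factor.
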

Note that all terms in the product can be calculated by Proposition~\ref{prop:qDS for monomials}. If $\sum_{i\in \widetilde{I}_j}c_i=|I_j|$ fails for a certain $j$, then $\psi^c_I =0$ .\smallskip

{\em We now restrict to the case where $\supp(c)$ is an interval}. The reason for this will become clear soon. Without loss of generality we may assume that $\supp(c)=[k]$ for some positive integer $k$, and suppose $|c|=r$.

By the analysis above, the only nonzero coefficients $\psi^c_I$  occur when $\widetilde{I}$ is an interval containing $[k]$. A direct analysis reveals the following two scenarios are the only possible ones:
\begin{enumerate}
\item $\widetilde{I}=\widetilde{I}_1$ is an interval of length $r+1$ containing $[k]$. Write $I=[1,r]-a$ for some $0\leq a\leq r-k+1$. Then we immediately get from \eqref{eq:ds_mon_as_product} that
\begin{align}
\label{eq:psi_c_single}
\qfact{r}\psi_I^c=\ds{\alpx^c}_{[1,1+r]-a}^q.
\end{align}

\item $\widetilde{I}=\widetilde{I}_1\sqcup\widetilde{I}_2$ with $\widetilde{I}_1=[i-L_i,i]$ and $\widetilde{I}_2=[i+1,i+1+R_i]$ for a certain $i\in [1,k-1]$, where $L_i=\sum_{j\leq i}c_j$ and $R_{i}=\sum_{j>i}c_j$. Now \eqref{eq:ds_mon_as_product}
 says
 \begin{align}
 \label{eq:psi_c_double}
 \qfact{r}\psi_I^c=\qbin{r+1}{L_i+1}\ds{x_1^{c_1}\cdots x_{i}^{c_i}}_{[i-L_i,i]}^q \ds{x_{i+1}^{c_{i+1}}\cdots x_k^{c_k}}_{[i+1,i+1+R_i]}^q
 \end{align}
\end{enumerate}
which can be easily evaluated.

\subsection{qDS of fundamental quasisymmetric polynomials}

In this section we will determine $\ds{F_{\alpha}(x_1,\dots,x_m)}_{n}^q$ for $\alpha$ a strong composition of $r$ and $m\leq n$. The case $q=1$ was one of the main results of \cite{DS}; see Remark~\ref{rem:proof_comparison} below.

Recall that $\n(\alpha) =\sum_{1\leq i\leq \ell(\alpha)}(i-1)\alpha_i$. 

\begin{theorem}\label{thm:rho_fundamental}
Let $\alpha\vDash r$, and let $m\geq \ell(\alpha)$.
We have that in $\kly_+$,
\begin{equation}
\label{eq:rho_fundamental}
\projmorph_+(F_{\alpha}(x_1,\dots,x_m))=\frac{q^{\n(\alpha)}}{\qfact{r}}u_{[1,r]+m-\ell(\alpha)}.
\end{equation}
\end{theorem}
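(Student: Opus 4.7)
My plan is to prove this by double induction on $m$, with an inner induction on $\ell = \ell(\alpha)$ for the base case. The key technical tool is an explicit expansion of $\delta^k u_{[A,B]}$, where $\delta := u_m - u_{m-1}$, which I would establish by induction on $k$ using Corollary~\ref{cor:mult_by_xi} and the $q$-Pascal recurrence:
\begin{equation*}
\delta^k u_{[A,B]} \;=\; \frac{q^{k(m-A)}\,\qfact{B-A+1}}{\qfact{B-A+k+1}}\sum_{j=0}^{k}(-1)^j\qbin{k}{j}q^{\binom{j}{2}}u_{[A-j,\,B+k-j]},
\end{equation*}
valid whenever $\{m-1,m\}\cap[A,B]\neq\emptyset$. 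The induction step uses $q^j\qbin{k}{j}+\qbin{k}{j-1}=\qbin{k+1}{j}$ to collapse the double sum.

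For the base case $m=\ell$, note $F_\alpha(x_1,\dots,x_\ell)=x_1^{\alpha_1}\cdots x_\ell^{\alpha_\ell}$ (the only monomial satisfying the strict-ascent constraints at $\set(\alpha)$). I would show by induction on $\ell$ that $u_1^{\alpha_1}(u_2-u_1)^{\alpha_2}\cdots(u_\ell-u_{\ell-1})^{\alpha_\ell}=\tfrac{q^{\n(\alpha)}}{\qfact{r}}u_{[1,r]}$ in $\kly_+$: the $\ell=1$ case follows from iterating $(1+q)u_1^2=u_1u_2$, and the inductive step reduces by commutativity to computing $(u_\ell-u_{\ell-1})^{\alpha_\ell}u_{[1,r-\alpha_\ell]}$ via the boxed formula, where in $\kly_+$ the term $u_{[0,\cdot]}$ vanishes at every iteration, leaving a single interval with the expected coefficient.

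For the outer inductive step $m-1\to m$, I would use the recursion obtained by splitting monomials of $F_\alpha(x_1,\dots,x_m)$ according to the number of occurrences of $x_m$:
\begin{equation*}
F_\alpha(x_1,\dots,x_m)=F_\alpha(x_1,\dots,x_{m-1})+\sum_{k=1}^{\alpha_\ell-1}x_m^k\,F_{\beta_k}(x_1,\dots,x_{m-1})+x_m^{\alpha_\ell}\,F_{\alpha^-}(x_1,\dots,x_{m-1}),
\end{equation*}
where $\beta_k=(\alpha_1,\dots,\alpha_{\ell-1},\alpha_\ell-k)$ and $\alpha^-=(\alpha_1,\dots,\alpha_{\ell-1})$. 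Applying $\projmorph_+$ and the inductive hypothesis turns each $F$ on the right into a single interval $u_{[A,B]}$; hitting it with the appropriate power of $\delta$ and using the boxed formula produces a sum of intervals $u_{I_s}$ with $I_s=[m-\ell+1-s,\,m-\ell+r-s]$ for $s\geq -1$. Collecting coefficients, the target $u_{I_{-1}}$ appears only in the $k=\alpha_\ell$ contribution with the desired coefficient, and the coefficient of $u_{I_s}$ for each $s\geq 0$ simplifies to a multiple of $\sum_{k=s}^{\alpha_\ell-1}q^k\qbin{k}{s}-q^s\qbin{\alpha_\ell}{s+1}$, which vanishes by the $q$-hockey-stick identity $\sum_{k=s}^{n-1}q^k\qbin{k}{s}=q^s\qbin{n}{s+1}$.

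The main obstacle is that when $\ell(\alpha)=1$ the last term becomes $\delta^r\cdot G_\emptyset^{m-1}=(u_m-u_{m-1})^r$, which does not fit the $u_{[A,B]}$ pattern. I would handle this separately by applying the boxed formula with the convention of an empty interval $[m,m-1]$ (giving $u_\emptyset=1$), obtaining $\delta^r=\tfrac{1}{\qfact{r}}\sum_{s=-1}^{r-1}(-1)^{s+1}\qbin{r}{s+1}q^{\binom{s+1}{2}}u_{I_s}$; the verification of this identity in the $\ell=1$ regime reduces to a clean telescoping via the same $q$-hockey-stick identity, yielding $G_{(r)}^m=\tfrac{u_{[m,m+r-1]}}{\qfact{r}}$ as required.
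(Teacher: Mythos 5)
Your proof is correct in substance, modulo a consistent off-by-one in the labelling of the intervals $I_s$: with your definition $I_s=[m-\ell+1-s,\,m-\ell+r-s]$, the target interval $[1,r]+m-\ell$ is $I_0$, not $I_{-1}$, and the surviving cancellation condition $\sum_{k=s}^{\alpha_\ell-1}q^k\qbin{k}{s}=q^s\qbin{\alpha_\ell}{s+1}$ governs the coefficients of $I_{s+1}$ rather than $I_s$. The hockey-stick cancellation itself is correct, and the boxed formula for $\delta^k u_{[A,B]}$ checks out by the $q$-Pascal induction you describe (the hypothesis $\{m-1,m\}\cap[A,B]\neq\emptyset$ propagates because all intervals $[A-j,B+k-j]$ contain $[A,B]$).

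That said, you are taking a genuinely different route from the paper. The paper inducts on $m+|\alpha|$ using the one-step recursion $F_\alpha(x_1,\dots,x_m)=F_\alpha(x_1,\dots,x_{m-1})+x_m F_{\alpha'}(\cdots)$ where $\alpha'$ decrements the last part (or drops it); applying $\projmorph_+$ and Corollary~\ref{cor:mult_by_xi} exactly once then telescopes the two resulting intervals immediately, with no auxiliary combinatorial identity needed. The base case $m=\ell(\alpha)$ is dispatched by citing the monomial expansion formulas of \S\S\ref{sub:monomials_x}, which rest on Theorem~\ref{th:expansion_f} and Proposition~\ref{prop:qDS for monomials}. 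By contrast, you induct only on $m$, remove all occurrences of $x_m$ at once, and therefore need the closed form for $\delta^k u_{[A,B]}$ plus the $q$-hockey-stick identity to collapse $\alpha_\ell+1$ competing contributions; your base case is self-contained (built directly from $u_0=0$ and the boxed formula rather than from \S\ref{sec:qDS}). The paper's version is shorter and invokes no extra identities; yours trades that efficiency for a self-contained argument and for isolating the closed form of $\delta^k u_{[A,B]}$, which is a reusable tool in its own right. One further remark: the $\ell(\alpha)=1$ situation you flag as needing separate treatment in fact degenerates smoothly once you adopt the empty-interval convention $u_{[m,m-1]}=u_\emptyset=1$ — the general calculation with $A=m$, $B=m-1$ produces exactly the same sum, so no special case is truly required.
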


In words, the expansion of $F_\alpha(u_1,u_2-u_1,\dots,u_m-u_1)$ in the basis $\mcbkp$ of $\kly_+$ consists of the single term $\frac{q^{\n(\alpha)}}{\qfact{r}}u_{[1,r]+m-\ell(\alpha)}$. Let us point out that the complete expansion in $\kly$ has many more terms in general. It can be described explicitly, but we will not need it here.

\begin{proof}
Let $k\coloneqq \ell(\alpha)$. We will prove the claim by induction on $m-k$.

 If $m=k$, then $F_{\alpha}(x_1,\dots,x_k)$ is the monomial $x^\alpha$.
As $\alpha$ is a strong composition, $\supp(\alpha)$ is connected and so the expansion of $\projmorph(x^\alpha)=(u_1-u_0)^{\alpha_1}\dots (u_r-u_{r-1})^{\alpha_{k}}$ in the $\mcbk$ basis is given by \eqref{eq:psi_c_single} and \eqref{eq:psi_c_double}.
To get the expansion for $\projmorph_+(x^\alpha)$, we retain the basis elements in $\mcbkp$. Only $u_{[r]}$ survives, corresponding to the case $a=0$ in \eqref{eq:psi_c_single}, so that
 it comes with the coefficient $\psi_{[r]}^\alpha=\ds{\alpx^c}_{r+1}^q/{\qfact{r}}$. Now $\ds{\alpx^c}_{r+1}^q$ equals $q^{\n(\alpha)}$ by Proposition~\ref{prop:qDS for monomials} as wanted. The claim is true for $m=k$.\smallskip

So we turn our attention to the case $m>k$.
Observe the following well-known recursion:
	\begin{align*}
	F_{\alpha}(x_1,\dots,x_m)=\left \lbrace \begin{array}{ll}F_{(\alpha_1,\dots,\alpha_k)}(x_1,\dots,x_{m-1})+x_mF_{(\alpha_1,\dots,\alpha_{k-1},\alpha_k-1)}(x_1,\dots,x_m) & \alpha_k>1,\\
	F_{(\alpha_1,\dots,\alpha_k)}(x_1,\dots,x_{m-1})+x_mF_{(\alpha_1,\dots,\alpha_{k-1})}(x_1,\dots,x_{m-1}) & \alpha_k=1.\end{array}\right.
	\end{align*}
We induct on $m+|\alpha|$.
Let $\alpha'$ denote the composition $(\alpha_1,\dots,\alpha_k-1)$ when $\alpha_k>1$ and $(\alpha_1,\dots,\alpha_{k-1})$ if $\alpha_k=1$. In either case, applying $\projmorph_+$ to the recursion above, and using the inductive hypothesis gives
\begin{align}
\projmorph_+(\qfact{r}F_{\alpha}(x_1,\dots,x_m))=q^{\n(\alpha)}u_{[1,r]+m-k-1}+q^{\n(\alpha')}\qint{r}x_mu_{[1,r-1]+m-k},
\end{align}
which by Corollary \ref{cor:mult_by_xi} becomes
\begin{align}
\projmorph_+(\qfact{r}F_{\alpha}(x_1,\dots,x_m))=q^{\n(\alpha)}u_{[1,r]+m-k-1}+q^{\n(\alpha')}q^{k-1}\left(u_{[1,r]+m-k}-u_{[1,r]+m-k-1}\right).
\end{align}
The inductive step is completed by noting that $q^{k-1}q^{\n(\alpha')}=q^{\n(\alpha)}$.
\end{proof}
We obtain  the following consequence generalizing \cite[Theorem 1.1]{DS} which is the case $q=1$.

\begin{corollary}
\label{cor:fundamental}
Let $\alpha\vDash n-1$, and let $\ell(\alpha)\leq m\leq n$.
Then we have
\begin{align*}
\ds{F_{\alpha}(x_1,\dots,x_m)}_{n}^q=\left\lbrace \begin{array}{ll}q^{\n(\alpha)}
& \text{if }m=\ell(\alpha);\\
0 & \text{otherwise.}\end{array}\right.
\end{align*}
In particular, for $f$ any quasisymmetric function of degree $n-1$, we have
\begin{align}
  \label{eq:fundamental ps}
\sum_{j\geq 1}f(1,q,\dots,q^{j-1})t^j=\frac{\sum_{m}t^m\ds{f(x_1,\dots,x_m)}_{n}^q}{(t;q)_{n}}.
\end{align}
\end{corollary}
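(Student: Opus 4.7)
The plan is to deduce both parts from Theorem~\ref{thm:rho_fundamental}: the identity it furnishes in $\kly_+$ packages essentially all the content, and the two claims amount to pushing it forward along two different morphisms out of $\kly_+$.

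By linearity of both statements in $f$, and since $\{F_\alpha\}_{\alpha \vDash n-1}$ is a basis of the degree $n-1$ homogeneous component of $\qsym$, it suffices to treat the case $f = F_\alpha$ for a fixed $\alpha \vDash n-1$. Setting $r = n-1$ and $k = \ell(\alpha)$, Theorem~\ref{thm:rho_fundamental} yields, for every $m \geq k$,
$$\projmorph_+(F_\alpha(x_1,\dots,x_m)) = \frac{q^{\n(\alpha)}}{\qfact{n-1}}\, u_{[1,n-1]+m-k} \qquad \text{in } \kly_+.$$

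For part (1), I would push this equality into $\kly_n$ along the quotient $\kly_+ \twoheadrightarrow \kly_n$, which sends $u_i$ to $0$ for $i \geq n$. The largest index in the interval $[1,n-1]+m-k$ equals $n-1+m-k$, which lies in $\{1,\dots,n-1\}$ precisely when $m = k$; otherwise $u_{[1,n-1]+m-k}$ contains some $u_i$ with $i \geq n$ and vanishes. Thus the image is $\frac{q^{\n(\alpha)}}{\qfact{n-1}}\, u_1 \cdots u_{n-1}$ when $m = k$ and $0$ when $m > k$. Reading off the coefficient of $u_1 \cdots u_{n-1}$ in the basis $\mcbkn$ and applying Theorem~\ref{th:qDS_and_K} gives $\ds{F_\alpha(x_1,\dots,x_m)}^q_n = q^{\n(\alpha)}$ for $m = k$ and $0$ for $m > k$; the case $m < k$ is trivial since $F_\alpha(x_1,\dots,x_m) = 0$ there.

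For part (2), I would instead apply the character $\spec$ of Proposition~\ref{prop:specialization} to the same equality in $\kly_+$. Since $\spec(u_i - u_{i-1}) = \qint{i} - \qint{i-1} = q^{i-1}$, the left-hand side evaluates to the principal specialization $F_\alpha(1, q, \dots, q^{m-1})$, while, via~\eqref{eq:sp_uI}, the right-hand side evaluates to $q^{\n(\alpha)} \qbin{n-1+m-k}{m-k}$. Multiplying by $t^m$, summing over $m \geq 1$ (contributions from $m < k$ vanish), and then reindexing $i = m - k$ together with Cauchy's $q$-binomial theorem \eqref{eq:cauchy_2} yields
$$\sum_{j \geq 1} F_\alpha(1, q, \dots, q^{j-1})\, t^j = \frac{q^{\n(\alpha)}\, t^k}{(t;q)_n},$$
which coincides with the right-hand side of \eqref{eq:fundamental ps} for $f = F_\alpha$ thanks to part (1). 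I do not foresee a genuine obstacle: Theorem~\ref{thm:rho_fundamental} does all the heavy lifting, and what remains is a bookkeeping exercise tracking the unique surviving basis element through the two morphisms $\kly_+ \twoheadrightarrow \kly_n$ and $\spec : \kly_+ \to \mathbf{k}$ depicted in Figure~\ref{fig:all_morphisms}.
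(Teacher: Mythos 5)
Your proposal is correct and follows essentially the same route as the paper's proof: both parts are deduced from Theorem~\ref{thm:rho_fundamental}, part~(1) by reading the coefficient of $u_{[1,n-1]}$ (via Theorem~\ref{th:qDS_and_K}), and part~(2) by applying the character $\spec$, reducing to $F_\alpha(1,q,\dots,q^{j-1})=q^{\n(\alpha)}\qbin{j-\ell(\alpha)+n-1}{n-1}$, and invoking Cauchy's $q$-binomial theorem. The only cosmetic difference is that you make explicit the passage $\kly_+\twoheadrightarrow\kly_n$ before extracting the top coefficient, whereas the paper reads the coefficient of $u_{[n-1]}$ directly from the $\kly_+$ expansion; these amount to the same thing.
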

\begin{proof}
	By Theorem~\ref{th:qDS_and_K}, we know that $\ds{F_{\alpha}(x_1,\dots,x_m)}_{n}^q$ equals $\qfact{r}$ times the coefficient of $u_{[r]}$ in $\projmorph_+(F_{\alpha}(x_1,\dots,x_m))$. The first claim of Corollary~\ref{cor:fundamental} then follows  immediately from Theorem~\ref{thm:rho_fundamental}. Now let $f=F_{\alpha}$ and pick $j\geq \ell(\alpha)$. 
The specialization $\spec:i\mapsto \qint{i}$ in Theorem~\ref{thm:rho_fundamental} gives immediately
\begin{align}
F_{\alpha}(1,q,\dots,q^{j-1})=q^{\n(\alpha)}\qbin{j-\ell(\alpha)+r}{r}.
\end{align}
Multiplying by $t^j$ and summing over all $j$, and then appealing to \eqref{eq:cauchy_binomial_theorem}, we get
\begin{align}
\sum_{j\geq 1}F_{\alpha}(1,q,\dots,q^{j-1})t^j=\frac{t^{\ell(\alpha)}q^{\n(\alpha)}}{(t;q)_n}.
\end{align}
This last formula is well-known in the theory of  quasisymmetric functions and $(P,\omega)$-partitions, cf.~\cite[Eq. (2.1)]{Gessel_Survey} of which it is a special case.
The right-hand side is clearly $\frac{t^{\ell(\alpha)}\ds{F_{\alpha}(x_1,\dots,x_{\ell(\alpha)})}_n^q}{(t;q)_n}$.  Linearity implies the claim for general $f$.
 \end{proof}

\begin{remark}
\label{rem:proof_comparison}
The reader should compare our proof of Corollary~\ref{cor:fundamental} with the proof in the case $q=1$ \cite[Proposition 4.1]{DS}. The latter relies upon the (slightly involved) exact computation of the divided symmetrization of monomial quasisymmetric polynomials.
It turns out that the qDS of monomial quasisymmetric polynomials is not as nice or easy.
We do not currently know of ways to arrive at Corollary~\ref{cor:fundamental} outside of the one that we employ \textemdash{} identify (and subsequently leverage) the link between qDS and coefficient extraction in the Klyachko algebra.
\end{remark}

\begin{remark}
For a large class of quasisymmetric functions arising `naturally' as quasisymmetric functions of posets in the theory of P-partitions, our previous result allows us to assign a combinatorial meaning to their qDS generalizing that in \cite[\S 4.3]{DS}.
\end{remark}

\subsection{The work of Aval--Bergeron--Bergeron}
\label{sub:ABB}
Recall that $\qsym_n^{+}\subset\mathbf{k}[\alpx_n]$ is the ideal generated by quasisymmetric polynomials without constant term.
We claim that $\ds{f}_n^q$ is equal to $0$ if $f\in \qsym_n^{+}$ has degree $n-1$.
In fact we have the following stronger statement.

\begin{corollary}
\label{cor:rho vanishes on qsym_+}
For any polynomial $f$ in $\qsym_n^{+}$, we have
\[
\projmorph_{n}(f)=0.
\]
In particular, if $f$ has degree $n-1$ then $\ds{f}_n^q=0$.
\end{corollary}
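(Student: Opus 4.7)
Since $\projmorph_n$ is an algebra morphism and $\qsym_n^{+}$ is the ideal of $\mathbf{k}[\alpx_n]$ generated by positive-degree quasisymmetric polynomials, it suffices to prove that $\projmorph_n(g) = 0$ for every positive-degree $g \in \Qsym_n$. Any such $g$ lies in the linear span of the fundamental quasisymmetric polynomials $F_\alpha(x_1,\ldots,x_n)$ where $\alpha$ ranges over nonempty compositions with $\ell(\alpha) \leq n$ (recall that $F_\alpha(x_1,\dots,x_n) = 0$ when $\ell(\alpha) > n$). Hence the problem reduces to showing $\projmorph_n\!\bigl(F_\alpha(x_1,\ldots,x_n)\bigr) = 0$ for each such $\alpha$.

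To see this, first observe that the following square commutes: for $P \in \mathbf{k}[\alpx_n]$, applying $\projmorph_+$ and then the quotient map $\kly_+ \twoheadrightarrow \kly_n$ (which kills $u_i$ for $i \geq n$) produces the same element as applying $\projmorph_n$ directly. Indeed, $\Delta_+(x_n) = z_n - z_{n-1}$ maps to $-u_{n-1}$ in $\kly_n$, matching $\Delta_n(x_n) = -z_{n-1}$, and the other generators agree already at the polynomial level. Thus I can compute $\projmorph_n(F_\alpha(x_1,\ldots,x_n))$ by first applying Theorem~\ref{thm:rho_fundamental} with $m=n$ and then projecting to $\kly_n$.

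Writing $r = |\alpha|$, Theorem~\ref{thm:rho_fundamental} gives in $\kly_+$
\[
\projmorph_+\!\bigl(F_\alpha(x_1,\ldots,x_n)\bigr) \;=\; \frac{q^{\n(\alpha)}}{\qfact{r}}\, u_{[1,r]+n-\ell(\alpha)} \;=\; \frac{q^{\n(\alpha)}}{\qfact{r}}\, u_{[n-\ell(\alpha)+1,\, n-\ell(\alpha)+r]}.
\]
Since $\alpha$ is a composition with positive parts summing to $r$, we have $1 \leq \ell(\alpha) \leq r$, so the interval $[n-\ell(\alpha)+1,\, n-\ell(\alpha)+r]$ always contains the index $n$. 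Because $u_n = 0$ in $\kly_n$, the squarefree monomial $u_{[n-\ell(\alpha)+1,\, n-\ell(\alpha)+r]}$ vanishes after projection, establishing $\projmorph_n\!\bigl(F_\alpha(x_1,\ldots,x_n)\bigr) = 0$.

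The second assertion is then immediate: if $f \in \qsym_n^{+}$ has degree $n-1$, then by Theorem~\ref{th:qDS_and_K} we have $\ds{f}_n^q = \qfact{n-1} \cdot \Top_n(f)$, and $\Top_n(f) = 0$ since it is a coefficient in the expansion of $\projmorph_n(f) = 0$ in the basis $\mcbkn$. The only mildly delicate point is the commutativity of the square relating $\projmorph_+$ and $\projmorph_n$, but this is straightforward from the definitions; the real work was already done in Theorem~\ref{thm:rho_fundamental}, which identifies the single surviving squarefree monomial as an interval that necessarily hits the forbidden index $n$.
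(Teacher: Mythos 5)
Your proof is correct and takes the same route as the paper: reduce to fundamental quasisymmetric polynomials $F_\alpha(x_1,\dots,x_n)$ by the algebra-morphism property, apply Theorem~\ref{thm:rho_fundamental} with $m=n$, and observe that the resulting interval $[n-\ell(\alpha)+1,\,n-\ell(\alpha)+r]$ contains the index $n$ (since $1\le\ell(\alpha)\le r$) and hence dies in $\kly_n$. You are merely more explicit than the paper in spelling out the compatibility of $\projmorph_+$ with $\projmorph_n$ and the interval-containment argument, which the paper leaves implicit (citing the commutative diagram and writing "immediately implies"); this is a fair fleshing out, not a different method.
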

\begin{proof}
Since $\projmorph_n$ is an algebra morphism, it suffices to verify this for a quasisymmetric polynomial $f$, and even for $f=F_{\alpha}(x_1,\dots,x_n)$ for some nonempty strong composition $\alpha$. Setting $m=n$ in the statement of Theorem \ref{thm:rho_fundamental} immediately implies the first claim. The second claim then follows immediately from Theorem~\ref{th:qDS_and_K}.
\end{proof}

Since $\projmorph_n$ vanishes on $\qsym_n^{+}$, one is naturally lead to consider the quotient $\bQ[\alpx_n]/\qsym_n^+$.
A monomial basis for this quotient was computed in Aval--Bergeron--Bergeron \cite[Theorem 4.1]{ABB04} and fortunately it fits nicely in our context.

Consider the set  
  \[
    \abb_n=\{(c_1,\dots,c_n)\suchthat \sum_{i\leq j\leq n }c_{j}\leq n-i \text{ for all } 1\leq i\leq n\}.
  \]
    Let $L_n=\mathrm{Vect}\left(\alpx^{c}\suchthat {c}\in \abb_n \right)$ be the linear span of the corresponding monomials.
 For instance, if $n=3$, then $\abb_3=\{(0,0,0),(1,0,0),(0,1,0),(1,1,0), (2,0,0)\}$ and therefore $\{1,x_1,x_2,x_1x_2,x_1^2\}$ is a basis for $L_3$.
The results in \cite{ABB04} imply the vector space decomposition
\begin{align}
\label{eq:poly_decomposition}
\bQ[\alpx_n]=L_n\oplus \qsym_n^+.
\end{align}

\begin{theorem}\label{thm:abb_qds}
If $f\in\mathbf{k}[\alpx_n]$, homogeneous of degree $n-1$, is written $f=g+h$ with $g\in L_n$ and $h\in \qsym_n^+$, then
\[
\ds{f}_{n}^q=g(1,q,q^2,\dots,q^{n-1}).
\]
\end{theorem}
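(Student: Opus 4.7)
The plan is to reduce the statement to monomials in the ABB basis, then apply Proposition~\ref{prop:qDS for monomials} directly. Since both sides of the claimed identity are linear in $f$, and since by Corollary~\ref{cor:rho vanishes on qsym_+} (applied in degree $n-1$) we have $\ds{h}_n^q = 0$, the problem reduces to showing
\[
\ds{\alpx^c}_n^q = q^{N(c)}
\]
for every $c = (c_1,\ldots,c_n) \in \abb_n$ with $|c| = n-1$, where $N(c) = \sum_{i}(i-1)c_i$ encodes the evaluation $\alpx^c \mapsto \prod_i q^{(i-1)c_i}$ at $(1,q,\ldots,q^{n-1})$.

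The key combinatorial step is to understand what the ABB condition means for the associated {\L}ukasiewicz path $\mc{P}_c$. The inequality $\sum_{i \leq j \leq n} c_j \leq n-i$, combined with $|c| = n-1$, rearranges to $c_1 + \cdots + c_{i-1} \geq i-1$ for all $1 \leq i \leq n$; that is, $h_j(c) \geq 0$ for $0 \leq j \leq n-1$. In particular, the descent set $S$ appearing in Proposition~\ref{prop:qDS for monomials} is empty. (Note also that the choice $i=n$ in the ABB inequality forces $c_n = 0$.)

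With $S = \emptyset$, the set $X_S$ reduces to the identity permutation of $\sgrp_n$, and Proposition~\ref{prop:qDS for monomials} yields
\[
\ds{\alpx^c}_n^q = q^{\binom{n-1}{2} - h(c)}.
\]
It remains to verify the exponent identity $\binom{n-1}{2} - h(c) = N(c)$. Expanding $h(c) = \sum_{i=1}^{n-1} h_i(c) = \sum_{k=1}^{n-1}(n-k)c_k - \binom{n}{2}$ (using $c_n = 0$) and simplifying, one checks that
\[
\binom{n-1}{2} - h(c) = (n-1)^2 - \sum_{k=1}^{n-1}(n-k)c_k = \sum_{k=1}^{n-1}(k-1)c_k = N(c),
\]
where the middle equality uses $\sum_{k=1}^{n-1} c_k = n-1$.

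The argument is essentially a routine bookkeeping once the right ingredients are assembled; the main conceptual point, and the only one that is not an entirely mechanical calculation, is recognizing that the ABB defining inequalities in top degree precisely translate to the condition $S = \emptyset$ that collapses the alternating sum in Proposition~\ref{prop:qDS for monomials} to a single monomial evaluation. Everything else follows from linearity, Corollary~\ref{cor:rho vanishes on qsym_+}, and the arithmetic verification above.
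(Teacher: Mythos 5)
Your proof is correct and follows the same route as the paper: reduce to ABB monomials via linearity and Corollary~\ref{cor:rho vanishes on qsym_+}, observe that the ABB inequalities force the {\L}ukasiewicz path to stay nonnegative until the last step so that $S=\emptyset$ in Proposition~\ref{prop:qDS for monomials}, and conclude. The only difference is that you spell out the exponent identity $\binom{n-1}{2}-h(c)=N(c)$, which the paper leaves implicit.
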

\begin{proof}
Let $g$ and $h$ be as in the statement of the theorem.
We have that $\ds{h}_n^q=0$ by Corollary~\ref{cor:rho vanishes on qsym_+}. So we consider $\ds{g}_n^q$. By linearity it suffices to consider $\ds{\alpx^c}_n^q$ for $c\in \abb_n$ satisfying $|c|=n-1$. We use Proposition~\ref{prop:qDS for monomials}.
Note that  $\sum_{1\leq j\leq i}c_i\geq i$ for $i\in [n]$, which means that the {\L}ukasiewicz path $P_c$ drops below the $x$-axis only in its final step, and thus the descent set $S_c$ is empty. It follows that
\begin{align}
\ds{\alpx^c}_n^q=q^{N(c)}=q^{\sum_{1\leq i\leq n}(i-1)c_i}.
\end{align}
The claim now follows.
\end{proof}
In words, qDS on the degree $n-1$ component of $\mathbf{k}[x_1,\dots,x_n]$ may be obtained by first projecting to the first factor in~\eqref{eq:poly_decomposition} and then computing the principal specialization (sending $x_i$ to $q^{i-1}$) of the resulting polynomial.

\begin{example}\label{ex:show_off_structural}
    From Example~\ref{ex:demo_qds_monomials}, we know $\ds{x_1x_3}_{3}^q=-(1+q)$. We can arrive at the same result differently by noting that
  \[
    x_1x_3=x_1(x_1+x_2+x_3)-(x_1^2+x_1x_2).
  \]
 Since $x_1x_2+x_1^2 \in L_3$ and $x_1(x_1+x_2+x_3)\in \qsym_3^{+}$ Theorem~\ref{thm:abb_qds} tells us that $\ds{x_1x_3}_{3}^q=g(1,q,q^2)$ where $g=-x_1x_2-x_1^2$.
\end{example}

We close this subsection by recording a corollary that naturally leads us to looking at the coinvariant algebra closely.
\begin{corollary}
The map $\projmorph$ induces a morphism from the coinvariant algebra $\mathbf{k}[\alpx_n]/\sym_n^+$ to $\kly_n$. 
\end{corollary}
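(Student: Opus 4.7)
The plan is extremely short since the heavy lifting has already been done in Corollary~\ref{cor:rho vanishes on qsym_+}. The statement we need is that $\projmorph_n : \mathbf{k}[\alpx_n] \to \kly_n$ descends to the quotient by $\sym_n^+$, i.e.\ vanishes on every positive-degree symmetric polynomial.

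First I would observe the elementary inclusion $\sym_n \subseteq \qsym_n$: every symmetric polynomial is automatically quasisymmetric, since the quasisymmetry condition is weaker than symmetry. Intersecting with the augmentation ideal gives $\sym_n^+ \subseteq \qsym_n^+$. Corollary~\ref{cor:rho vanishes on qsym_+} tells us that $\projmorph_n(f) = 0$ for every $f \in \qsym_n^+$, and so in particular for every $f \in \sym_n^+$. Since $\projmorph_n$ is an algebra morphism vanishing on the ideal $\sym_n^+$, it factors uniquely through the quotient $\mathbf{k}[\alpx_n]/\sym_n^+$, yielding the desired algebra morphism to $\kly_n$.

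There is no real obstacle here; the corollary is a direct consequence of the quasisymmetric result already proved. The only thing worth flagging is that this corollary is what motivates the next section (looking at the coinvariant algebra more closely), since the induced map realizes $\kly_n$ as a quotient of the type $A$ coinvariant ring, which in turn is Borel's model for $H^*(\flag{n},\mathbf{k})$; this is consistent with the geometric picture from the introduction that $H^*(\Perm_n,\bQ)^{\sgrp_n}$ is a quotient of $H^*(\flag{n},\bQ)$ under the pullback $i^*$.
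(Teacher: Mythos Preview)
Your proof is correct and matches the paper's argument essentially verbatim: both use the inclusion $\sym_n^+ \subseteq \qsym_n^+$ together with Corollary~\ref{cor:rho vanishes on qsym_+} to conclude that $\projmorph_n$ vanishes on $\sym_n^+$ and hence descends to the coinvariant algebra.
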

\begin{proof}
Since $\sym_n^+\subset \qsym_n^+$, and $\projmorph_n$ vanishes on $\qsym_n^+$ by Corollary~\ref{cor:rho vanishes on qsym_+}, the claim follows.
\end{proof}

This morphism for $q=1$ coincides with the pullback homomorphism $\iota^*$ between the cohomology ring of the flag variety and the $\sgrp_n$-invariant part of the cohomology ring of the permutahedral variety, as detailed in the introduction. A geometric interpretation for generic $q$, or any other value of $q$, is not known to the authors.

\section{Klyachko--Macdonald identity and Chow class of a Deligne--Lusztig variety}
\label{sec:qKM}

We are now ready to return to our primary motivation mentioned in the introduction, which was to understand the relationship between the two following formulas for $w\in \sgrp_n$ of length $\ell$, and $\schub{w}$ is the corresponding Schubert polynomial (cf. \S\S\ref{sub:schubert}).
\begin{align}
\schub{w}(u_1,u_2-u_1,\dots,)&=\frac{1}{\ell}\sum_{\mathbf{a}\in \reduced(w)} u_{a_1}u_{a_2}\cdots u_{a_\ell}\quad\textup{ in } \kly^{1}_n\simeq H^*(\Perm_n,\bQ)^{\sgrp_n}.
\label{eq:kly1}\\
\schub{w}(1,\dots,1))&=\frac{1}{\ell}\sum_{\mathbf{a}\in \reduced(w)} a_1a_2\cdots a_\ell.
\label{eq:macd1}
\end{align}

The first identity is Klyachko's identity (for type $A$) first stated in~\cite{Kly85} and proved in~\cite{Kly95}; since the latter is in Russian, the authors included a slightly simplified version of the proof in~\cite[\S8]{NT20}. The second identity  is Macdonald's reduced word identity \cite{Macdonald}.\smallskip

Before we deal with the $q$-version and prove Theorem~\ref{th:main_4} which encompasses both identities, let us explain that it is possible to deduce \eqref{eq:macd1} from \eqref{eq:kly1}. It is not as straightforward as specializing $u_i\mapsto 1$ in \eqref{eq:kly1}, since the identity holds in $\kly_n^1$ and not in $\kly^1$ or $\kly_+^1$, cf. \S\ref{sub:specialization}. In fact both sides in \eqref{eq:kly1} vanish for $\ell\geq n$ since the top graded dimension of $\kly_n^1$ is $n-1$. 
It should not come as a surprise since Klyachko's identity actually holds for any representative of the Schubert classes. Now Schubert polynomials have the important stability property under the inclusions $\sgrp_n\subseteq\sgrp_{n+1}$, and are thus defined for $w\in\sgrp_\infty$. So pick $w\in\sgrp_\infty$, and let $n_0$ be such that $w\in\sgrp_{n_0}$. Then \eqref{eq:kly1} is valid for all $n\geq n_0$, and thus holds in $\kly^1_+$. The specialization $u_i\mapsto 1$ is now possible and thus we obtain~\eqref{eq:macd1}.\smallskip

The methodology of proof of $\eqref{eq:kly1}$ cannot be simply extended to the $q$-case: it is based on the theory of root systems, and it is not clear how the parameter $q$ fits into this story. Also the inductive nature of this proof does not mix well with the $\comaj$ statistic. We will instead prove Theorem~\ref{th:main_4} using the Fomin-Stanley approach to the $q$-Macdonald identity. The subsequent remainder of this section involves applications to certain coefficients $a_w(q)$ related to a particular Deligne--Lusztig variety.

\subsection{A $q$-Klyachko--Macdonald identity}
\label{sub:qKM}

We define the \emph{NilCoxeter algebra} $NC_n$ as the free associative algebra over $\bC$ generated by $v_1,\dots,v_{n-1}$ subject to $v_i^2=0$, $v_iv_j=v_jv_i$ for $|i-j|\geq 2$ and $v_{i}v_{i+1}v_{i}=v_{i+1}v_iv_{i+1}$ for $1\leq i\leq n-2$. Given these relations, it follows that $NC_n$ has a linear basis comprising of elements $v_{w}$ indexed by $w\in \sgrp_n$. The NilCoxeter algebra plays a crucial role in Fomin and Stanley's proof of the $q$-analogue of Macdonald's reduced word identity \cite{FS94}. As we demonstrate next, we are able to adapt their proof\footnote{
As a warning to the reader, Fomin and Stanley use $u$-variables to denote the generators for the NilCoxeter algebra. We use $v$ instead since the $u$-variables are the generators for $\kly$.} to incorporate the Klyachko algebra in the setup.

Let $\schub{w}(x_1,\ldots,x_{n-1})$ be the Schubert polynomial associated with $w\in\sgrp_n$. Let $\schub{n}$ be defined by
\[\schub{n}(x_1,\ldots,x_{n-1})=\sum_{w\in\sgrp_n}\schub{w}v_w \quad\in \bQ[x_1,\dots,x_{n-1}]\otimes NC_n.\]
 For $i=1,\dots,n-1$, define $h_i(x)=1+xv_i$  where $x$ is any parameter. Following~\cite[\S 2]{FS94}, we have the equality
\begin{align}
\label{eq:fs_bjs}
\schub{n}(x_1,\ldots,x_{n-1})=\prod_{i=1}^{n-1}\prod_{j=n-1}^ih_j(x_i),
\end{align}
which is in fact equivalent to the Billey-Jockusch-Stanley formula \cite{Bil93}. Since the $v_i$ do not commute, we emphasize how to interpret the product on the right hand side. The inner product is multiplied from left to right, whereas the order is immaterial as far as the outer product is concerned.
For instance, here is $\schub{3}(x_1,x_2)$.
\[
\schub{3}(x_1,x_2)=(h_2(x_1)h_1(x_1) )\cdot h_2(x_2)=(1+x_1v_2)(1+x_1v_1) \cdot (1+x_2v_2)
\]

Using the fact that $h_i(x)h_j(y)=h_j(y)h_i(x)$ for $j-i\geq 2$ \cite[Lemma 3.1 (i)]{FS94}, we may rewrite \eqref{eq:fs_bjs} as
\begin{equation}\label{eq:rec_schubert}
\schub{n}(x_1,\ldots,x_{n-1})=
\schub{n-1}(x_1,\ldots,x_{n-2})_{|v_j\mapsto v_{j+1}}\prod_{i=1}^{n-1}h_i(x_i),
\end{equation}
where the terms in the product on the right-hand side are multiplied from left to right. So, for instance
\[
\schub{4}(x_1,x_2,x_3)= \schub{3}(x_1,x_{2})_{|v_j\mapsto v_{j+1}} h_1(x_1)h_2(x_2)h_3(x_3).
\]

The result in Theorem~\ref{th:qKM} is central to this section. The reader should compare it  to \cite[Lemma 5.3]{FS94} as it foreshadows our application of principal specialization to the statement below.
To establish this result, we will need the following lemma:

\begin{lemma}[`Yang-Baxter relation']
\label{lemma:Yang_Baxter}
Let $\mathcal{A}$ be a commutative algebra and $a,b,c$ three elements in $\mathcal{A}$ that satisfy  $c(a+c-b)=0$. Then for any $i=1,\ldots, n-2$, we have in $\mathcal{A}\otimes NC_n$
\begin{equation}
\label{eq:Yang_Baxter}
h_{i}(a)h_{i+1}(b)h_{i}(c)=h_{i+1}(c)h_{i}(a+c)h_{i+1}(b-c).
\end{equation}
\end{lemma}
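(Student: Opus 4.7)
The plan is a direct expansion on both sides, exploiting the fact that the subalgebra of $NC_n$ generated by $v_i,v_{i+1}$ has a $6$-dimensional basis
$$\{1,\; v_i,\; v_{i+1},\; v_iv_{i+1},\; v_{i+1}v_i,\; v_iv_{i+1}v_i = v_{i+1}v_iv_{i+1}\},$$
thanks to $v_i^2=v_{i+1}^2=0$ and the braid relation. Since $a,b,c$ all come from the commutative algebra $\mathcal{A}$, they pass freely through the NilCoxeter generators, so every product $h_i(\cdot)h_{i+1}(\cdot)h_i(\cdot)$ can be unambiguously rewritten as a linear combination in this basis with coefficients in $\mathcal{A}$. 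The identity then boils down to checking equality in each of the six coordinates.

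More concretely, I would first expand
$$h_i(a)h_{i+1}(b)h_i(c) = 1 + (a+c)v_i + b v_{i+1} + ab\, v_iv_{i+1} + bc\, v_{i+1}v_i + abc\, v_iv_{i+1}v_i,$$
using $v_i^2=0$ to kill the $ac v_i^2$ term arising in the middle of the expansion. Then I would expand the right-hand side similarly:
$$h_{i+1}(c)h_i(a+c)h_{i+1}(b-c) = 1 + (a+c)v_i + b v_{i+1} + (a+c)(b-c) v_iv_{i+1} + c(a+c) v_{i+1}v_i + c(a+c)(b-c) v_{i+1}v_iv_{i+1},$$
after using $v_{i+1}^2=0$ and simplifying the $v_{i+1}$-coefficient $c+(b-c)=b$. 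Finally I would invoke the braid relation to identify $v_iv_{i+1}v_i$ with $v_{i+1}v_iv_{i+1}$ in the top-degree term on each side.

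The comparison of coefficients gives three nontrivial identities in $\mathcal{A}$: the $v_iv_{i+1}$-coefficients must match ($ab = (a+c)(b-c)$), the $v_{i+1}v_i$-coefficients must match ($bc = c(a+c)$), and the top-degree coefficients must match ($abc = c(a+c)(b-c)$). A direct algebraic manipulation shows that each of these three equalities is equivalent to the single hypothesis $c(a+c-b)=0$; indeed the first expands to $-c(a+c-b)=0$, the second to $c(a+c-b)=0$ on the nose, and the third follows from the first by multiplication by $b$ together with the second (or can be read off as $[c(a+c)](b-c) = cb(b-c) = b\cdot c(b-c) = b\cdot c(a+c)\cdot\frac{b-c}{a+c}$ informally, but more cleanly one just substitutes $c(a+c)=cb$ into the right-hand side).

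There is no real obstacle here beyond bookkeeping: the only place where the hypothesis is used is in matching these three coefficients, and once one notes that all three collapse to the same relation $c(a+c-b)=0$, the identity is proved. The cleanest write-up is probably a single table comparing the six coefficients of both sides, with a one-line verification of each matching.
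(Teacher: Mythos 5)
Your proposal is correct and follows essentially the same approach as the paper: expand both sides in the $6$-element basis $\{1,v_i,v_{i+1},v_iv_{i+1},v_{i+1}v_i,v_iv_{i+1}v_i=v_{i+1}v_iv_{i+1}\}$ of the subalgebra generated by $v_i,v_{i+1}$, then compare coefficients and reduce each nontrivial matching to $c(a+c-b)=0$. The paper states the verification is routine without spelling it out; your expansion correctly fills in those details (the cleanest route for the top-degree term is simply $abc=(ab)c=[(a+c)(b-c)]c$, using the already-verified $v_iv_{i+1}$-coefficient identity).
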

\begin{proof}
We can pick $i$=1 without loss of generality. By expanding the expressions on both sides, we must compare the coefficients of $v_w$ for all $6$ elements of $\sgrp_3$. These are easily checked to coincide in all cases, using the assumed relation $c(a+c-b)=0$. 
\end{proof}

If $\mathcal{A}$ is an integral domain, then either $c=0$ in which case \eqref{eq:Yang_Baxter} is trivial, or $c=a+b$ and then this becomes precisely the case used in~\cite{FS94}.

\begin{theorem}
\label{th:qKM}
In $\kly_+\otimes NC_n$, there holds:
\begin{equation}
\label{eq:schubert_nilcoxeter_factorization}
\schub{n}(u_1,u_2-u_1,\ldots,u_{n-1}-u_{n-2})=\prod_{k=\infty}^1\prod_{j=n-1}^1h_j\left(q^{k-1}(1-q)u_j\right).
\end{equation}
For any $w\in \sgrp_n$ one has the identity in $\kly_+$
\begin{align}
\label{eq:qKM}
\schub{w}(u_1,u_2-u_1,u_3-u_2,\dots)=\frac{1}{\qfact{\ell}}\sum_{\mathbf{a}\in \reduced(w)} q^{\comaj(\mathbf{a})}u_{a_1}u_{a_2}\cdots u_{a_\ell},
\end{align}
where $\ell$ is the length of $w$.
\end{theorem}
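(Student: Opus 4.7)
The plan is to prove the factorization~(\ref{eq:schubert_nilcoxeter_factorization}) first, and then deduce~(\ref{eq:qKM}) from it by extracting the coefficient of $v_w$ on both sides.

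\textbf{Deducing~(\ref{eq:qKM}) from~(\ref{eq:schubert_nilcoxeter_factorization}).} Since $v_j^2=0$, expanding each factor $h_j(q^{k-1}(1-q)u_j) = 1 + q^{k-1}(1-q)u_j v_j$ on the right-hand side of~(\ref{eq:schubert_nilcoxeter_factorization}) produces a sum indexed by ordered sequences of pairs $(k_1,j_1),\ldots,(k_\ell,j_\ell)$, read left-to-right, satisfying $k_1\geq k_2\geq\cdots\geq k_\ell\geq 1$, with strict inequality $k_i>k_{i+1}$ forced whenever $j_i<j_{i+1}$ (because within each level $k$ the factors appear in decreasing order of $j$). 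The contribution to $v_w$ requires $\mathbf{a}=j_1\cdots j_\ell\in\reduced(w)$, and equals $(1-q)^\ell q^{\sum_i (k_i-1)}\,u_{a_1}\cdots u_{a_\ell}$. Introducing $m=k_\ell-1$ and $d_i=k_i-k_{i+1}$, we have $\sum_i(k_i-1)=\ell m+\sum_{i=1}^{\ell-1} i\,d_i$ with $d_i\geq 0$ and the lower bound $d_i\geq 1$ for $i\in\Asc(\mathbf{a})$. A direct geometric series evaluation yields
\[
(1-q)^\ell\sum q^{\ell m+\sum_i i\,d_i} = \frac{(1-q)^\ell \,q^{\sum_{i\in\Asc(\mathbf{a})} i}}{\prod_{i=1}^{\ell}(1-q^i)} = \frac{q^{\comaj(\mathbf{a})}}{\qfact{\ell}},
\]
which matches the coefficient of $v_w$ on the left-hand side, namely $\schub{w}(u_1,u_2-u_1,\ldots)$ by definition of $\schub{n}$, giving~(\ref{eq:qKM}).

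\textbf{Proof of~(\ref{eq:schubert_nilcoxeter_factorization}).} I would proceed by induction on $n$. The base case $n=2$ reduces to the telescoping $\prod_{k\geq 1}h_1(q^{k-1}(1-q)u_1)=h_1(u_1)$, valid because $v_1^2=0$ implies $h_1(a)h_1(b)=h_1(a+b)$ and $\sum_{k\geq 1}q^{k-1}(1-q)=1$. For the inductive step, apply the BJS-type recurrence~(\ref{eq:rec_schubert}) with $x_i=u_i-u_{i-1}$ and substitute the inductive hypothesis for $\schub{n-1}$ after the shift $v_j\mapsto v_{j+1}$; the identity then reduces to
\[
\biggl[\prod_{k=\infty}^{1}\prod_{j=n-1}^{2} h_j(q^{k-1}(1-q)u_{j-1})\biggr]\cdot\prod_{i=1}^{n-1} h_i(u_i-u_{i-1}) = \prod_{k=\infty}^{1}\prod_{j=n-1}^{1} h_j(q^{k-1}(1-q)u_j)
\]
in $\kly_+\otimes NC_n$. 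I would establish this by migrating each last-row factor $h_i(u_i-u_{i-1})$ leftward into the interleaved product on the right, using far-commutation $h_i(a)h_j(b)=h_j(b)h_i(a)$ for $|i-j|\geq 2$ together with the Yang-Baxter relation of Lemma~\ref{lemma:Yang_Baxter}. The hypothesis $c(a+c-b)=0$ is supplied by the $\kly_+$ relation $u_i((q+1)u_i-qu_{i-1}-u_{i+1})=0$: for instance the triple $(a,b,c)=(q(u_i-u_{i-1}),\,u_{i+1},\,u_i)$ gives $a+c-b=(q+1)u_i-qu_{i-1}-u_{i+1}$, so $c(a+c-b)=0$.

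\textbf{Main obstacle.} The crux is the inductive step for~(\ref{eq:schubert_nilcoxeter_factorization}): orchestrating the explicit sequence of far-commutations and Yang-Baxter moves while carefully tracking how the arguments of each $h_j$ evolve, and verifying at every move that the hypothesis of Lemma~\ref{lemma:Yang_Baxter} can be met from a $\kly_+$ relation. A secondary point is making sense of the infinite product on the right-hand side, which can be handled by truncating at some large $K$, performing all the moves, and then passing to the formal limit $K\to\infty$, since the coefficients $q^{k-1}$ become negligible for $k$ large.
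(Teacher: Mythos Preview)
Your derivation of~(\ref{eq:qKM}) from~(\ref{eq:schubert_nilcoxeter_factorization}) is correct and is essentially what the paper does (it cites \cite[Lemma~5.4]{FS94}, which is precisely your geometric-series computation).

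For the factorization itself, your plan shares the right ingredients with the paper---induction on $n$, the recurrence~(\ref{eq:rec_schubert}), far-commutation, and the Yang--Baxter relation of Lemma~\ref{lemma:Yang_Baxter}---but misses one structural simplification that dissolves exactly the obstacle you flag. The paper does \emph{not} induct on $n$ in the infinite-product identity directly. It first observes that~(\ref{eq:schubert_nilcoxeter_factorization}) follows by iterating the one-step relation
\[
\schub{n}(u_1,u_2-u_1,\ldots)=\schub{n}\bigl(qu_1,q(u_2-u_1),\ldots\bigr)\prod_{j=n-1}^{1}h_j\bigl((1-q)u_j\bigr),
\]
together with its rescalings $u_i\mapsto q^{k}u_i$, which are automatic since the relations of $\kly_+$ are homogeneous. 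This is now a \emph{finite} identity, and the paper proves it by induction on $n$. After applying~(\ref{eq:rec_schubert}), the inductive step boils down to rewriting the finite product $\prod_{i=1}^{n-1}h_i\bigl(q(u_i-u_{i-1})\bigr)\cdot\prod_{j=n-1}^{1}h_j\bigl((1-q)u_j\bigr)$, and the Yang--Baxter move is invoked with the triple $a=q(u_j-u_{j-1})$, $b=u_{j+1}-qu_j$, $c=(1-q)u_j$, for which $c(a+c-b)=(1-q)u_j\bigl((q+1)u_j-qu_{j-1}-u_{j+1}\bigr)=0$ in $\kly_+$.

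In your direct approach the analogous step would require infinitely many Yang--Baxter moves through the layers indexed by $k$, and the sample triple you propose, $(a,b,c)=(q(u_i-u_{i-1}),\,u_{i+1},\,u_i)$, does not match the arguments $q^{k-1}(1-q)u_{j-1}$, $q^{k-1}(1-q)u_j$, $u_i-u_{i-1}$ that actually appear in the identity you set up; so the applicability of Lemma~\ref{lemma:Yang_Baxter} at each step is not yet checked. The one-step reduction is the missing idea.
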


These identities do not hold in $\kly$. Consider $\schub{231}=x_1x_2$. The left hand side has a term $-u_{\{0,2\}}$ while the right hand side does not.

Applying the specialization $\spec$ to \eqref{eq:qKM} gives the $q$-Macdonald reduced word identity:

\begin{corollary}\label{cor:specialization_qKM}
\begin{align}
\label{eq:qM}
\schub{w}(1,q,q^2,\dots)&=\frac{1}{\qfact{\ell}}\sum_{\mathbf{a}\in \reduced(w)} q^{\comaj(\mathbf{a})}\qint{a_1}\qint{a_2}\cdots \qint{a_\ell}.
\end{align}
\end{corollary}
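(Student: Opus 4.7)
The plan is to obtain Corollary~\ref{cor:specialization_qKM} by applying the algebra morphism $\spec : \kly_+ \to \mathbf{k}$ of Proposition~\ref{prop:specialization} to both sides of the $q$-Klyachko--Macdonald identity \eqref{eq:qKM}. Since $\schub{w}$ is a polynomial of degree $\ell=\ell(w)$ in finitely many variables and since the right-hand side of \eqref{eq:qKM} is a finite sum, there is no convergence issue: the identity \eqref{eq:qKM} lives in $\kly_+$ and $\spec$ is simply a $\mathbf{k}$-algebra morphism, so it can be applied termwise.

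For the right-hand side, $\spec(u_{a_j}) = \qint{a_j}$ by definition, so
\[
\spec\!\left(\frac{1}{\qfact{\ell}}\sum_{\mathbf{a}\in \reduced(w)} q^{\comaj(\mathbf{a})}u_{a_1}\cdots u_{a_\ell}\right)
= \frac{1}{\qfact{\ell}}\sum_{\mathbf{a}\in \reduced(w)} q^{\comaj(\mathbf{a})}\qint{a_1}\cdots \qint{a_\ell},
\]
which is exactly the right-hand side of \eqref{eq:qM}.

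For the left-hand side, since $\spec$ is an algebra morphism and $\schub{w}$ is a polynomial, we have
\[
\spec\bigl(\schub{w}(u_1,u_2-u_1,u_3-u_2,\ldots)\bigr)
= \schub{w}\bigl(\spec(u_1),\spec(u_2)-\spec(u_1),\spec(u_3)-\spec(u_2),\ldots\bigr).
\]
The key identity is that for every $i\ge 1$,
\[
\spec(u_i)-\spec(u_{i-1}) \;=\; \qint{i}-\qint{i-1} \;=\; q^{i-1},
\]
where we use the convention $\spec(u_0)=0$ built into Proposition~\ref{prop:specialization} (so that $\spec(u_1)-\spec(u_0)=\qint{1}=1=q^{0}$). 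Substituting, the left-hand side becomes $\schub{w}(1,q,q^2,\ldots)$, matching \eqref{eq:qM}.

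There is essentially no obstacle: the whole content is packed into Theorem~\ref{th:qKM} and the telescoping $\qint{i}-\qint{i-1}=q^{i-1}$. The only point worth double-checking is that $\schub{w}$, when viewed as a polynomial in infinitely many variables via the stability $\sgrp_n\subset\sgrp_\infty$, only uses variables $x_i$ with $i$ bounded by some $n_0$ depending on $w$; hence the expression $\schub{w}(u_1,u_2-u_1,\ldots)$ is a genuine polynomial expression in finitely many generators of $\kly_+$, and $\spec$ commutes with polynomial substitution as used above. This completes the deduction.
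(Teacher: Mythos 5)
Your proof is correct and matches the paper's argument: the paper simply states that Corollary~\ref{cor:specialization_qKM} follows by applying $\spec$ to \eqref{eq:qKM}, which is exactly the reduction you carry out (with the helpful elaboration that $\spec(u_i)-\spec(u_{i-1})=\qint{i}-\qint{i-1}=q^{i-1}$ produces the principal specialization on the left).
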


\subsection{Chow classes of Deligne--Lusztig varieties}
\label{sub:Kim}

We just saw that Theorem~\ref{th:qKM} lifts the $q$-Macdonald reduced word identity to the algebra
$\kly$.

Here we will see another application of Theorem~\ref{th:qKM}: when $q$ is a prime power, it gives a positive formula for the coefficients in  Schubert cycle expansion of the  class of a certain Deligne--Lusztig variety.
This is a positive characteristic analogue of one of the key observations of \cite{NT20}.

Deligne--Lusztig varieties were introduced by the eponymous authors in the seminal article~\cite{Deligne_Lusztig}. These are fundamental subvarieties of the generalized flag varieties over a finite field, introduced to describe the representation theory of finite groups of Lie type; see \cite{Dudas_Lectures}. In~\cite{Kim20}, Kim studies classes of Deligne--Lusztig varieties in the Chow ring of the flag variety, namely the coinvariant ring $R_n=\bQ[\alpx_n]/\sym_n^+$. 

Here we only focus on type $A$, in the split case. Let us briefly explain Deligne--Lusztig varieties in this case for completeness. 
Let $q$ be a prime power, $\mathbb{F}_q$ denotes the finite field with $q$ elements and $\overline{\mathbb{F}_q}$ an algebraic closure of it. 
Denote by $\mathrm{Frob}:x\mapsto x^q$ the Frobenius map on $\overline{\mathbb{F}_q}$, whose fixed points form $\mathbb{F}_q$. 
Consider the general linear group $G=GL_n(\overline{\mathbb{F}_q})$. Now $\mathrm{Frob}$ acts on $G$ by $(a_{ij})\mapsto (a_{ij}^q)$ so that it has as fixed points $G^F=GL_n(\mathbb{F}_q)$. Let $B\subset G$ be the subgroup of upper triangular matrices and $G/B$ be the flag variety, identified as the set of complete flags $(F_i)_{i=0}^n$ in $\mathbb{F}_q^n$. Let $\sigma\in\sgrp_n$. The \emph{Deligne--Lusztig variety} $X(\sigma)$ is defined as the set of flags $\mc{F}=(F_i)_i$ such that $\mc{F}$ and $\mathrm{Frob}(\mc{F})$ are in {\em relative position $\sigma$}. One way to express this last condition is that there exists a basis $(e_i)_i$ of $\mathbb{F}_q^n$ adapted to the flag $\mc{F}$ (that is, $(e_1,\dots,e_k)$ is a basis of $F_k$ for any $k$) such that $(e_{\sigma(i)})_i$ is adapted to $\mathrm{Frob}(\mc{F})$. Note that $X(\sigma)$ has a natural structure of smooth projective variety.

In the article \cite{Kim20}, D. Kim gives an expression for the Chow class of any Deligne--Lusztig variety (in any type) in terms of Schubert intersection numbers. Let us state the result of interest here, which concerns the standard Deligne--Lusztig variety $\mathrm{Cox}_n=X(\sigma_n)$ with $\sigma_n=s_1s_2\dots s_{n-1}$ the long cycle $(1,2,\dots,n)$. The variety $\mathrm{Cox}_n$ is sometimes called the \emph{Coxeter variety}~\cite[\S 6]{Dudas_Lectures}. We decompose its Chow class $[\mathrm{Cox}_n]$ as an element of $R_n$:
\[[\mathrm{Cox}_n]=\sum_w a_w(q)\schub{w_ow} \mod \sym_n^+,\]
where $w$ runs through all permutations of $\sgrp_n$ of length $n-1$.

 Then \cite[Proposition 6.2]{Kim20}, in the special case of $\sigma_n$, can be rewritten in the form 
 \begin{equation} \label{eq:kim_formula}
 a_w(q)=\ds{\schub{w}}_{r+1}^q,
 \end{equation}
 for any $w\in\sgrpp_{r+1}$, the set of permutations in $\sgrp_{r+1}$ with length $r$.\smallskip

By Theorem~\ref{th:qDS_and_K}, we know that $a_w(q)$ is $\qfact{r}$ times the coefficient of $u_{[1,r]}$ in the squarefree basis expansion of $\schub{w}(u_1,u_2-u_1,u_3-u_2,\dots)$. Together with the identity~\eqref{eq:qKM}, this gives us the formula
\begin{align}
\label{eq:awq formula}
 a_w(q)=\frac{1}{\qfact{r}}\sum_{\mathbf{a}\in \reduced(w)} q^{\comaj(\mathbf{a})}A_{\cont({\bf a})}(q),
 \end{align}
where $\cont(\mathbf{a})$ is the sequence of nonnegative integers whose $i$th entry counts the number of $i$s in $\mathbf{a}$ for all $i\geq 1$.
Setting $q=1$ in ~\eqref{eq:awq formula} gives us the quantity $a_w\coloneqq a_w(1)$ which is exactly what is computed in \cite[Theorem 1.1]{NT20}.

\begin{example}
Consider $w=32415\in \sgrpp_5$.
It has three reduced words: $2123$, $1213$, and $1231$. The corresponding contents are: $(1,2,1,0)$, $(2,1,1,0)$, and $(2,1,1,0)$.
We have $A_{(1,2,1,0)}=(1+q)^2(1+q+q^2)$ and $A_{(2,1,1,0)}=(1+q)(1+q+q^2)$. By ~\eqref{eq:awq formula} we get
\begin{align*}
a_w(q)=\frac{1}{\qfact{4}}(q^5(1+q)^2(1+q+q^2)+(q^3+q^4)(1+q)(1+q+q^2))
=q^3.
\end{align*}
\end{example}

The statement of \cite[Theorem 1.1]{NT20} mentions certain symmetries afforded by the $a_w$. It is natural to inquire whether they lift in the presence of the $q$ parameter.
Consider conjugation by the longest word.
From the definition of qDS we have
\[
\ds{f}_{r+1}^q=(-1)^{r}q^{\binom{r}{2}}\ds{\omega f}_{r+1}^{q^{-1}},
\]
where $\omega$ is the involution on $\bQ[\alpx_n]$ mapping $x_i\to x_{n+1-i}$ for $i\in [n]$.
Since $\schub{w}(x_1,\dots,x_{n})=\schub{w_0ww_0}(-x_n,\dots,-x_1)$ modulo $\sym_n^+$, we infer the following
\begin{proposition} 
For any $w\in\sgrpp_{r+1}$,
\[
a_{w_0ww_0}(q)=q^{\binom{r}{2}}a_w(q^{-1}).
\]
\end{proposition}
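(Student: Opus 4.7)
The plan is to combine the two identities displayed in the paragraph immediately preceding the proposition, both of which are furnished as inputs. Explicitly, starting from Kim's formula $a_{w_0ww_0}(q)=\ds{\schub{w_0ww_0}}_{r+1}^q$ (with $n=r+1$), I would apply the symmetry relation for qDS with $f=\schub{w_0ww_0}$ to rewrite
\[
a_{w_0ww_0}(q)=(-1)^r q^{\binom{r}{2}}\ds{\omega\schub{w_0ww_0}}_{r+1}^{q^{-1}}.
\]
Then I would feed in the Schubert polynomial identity $\schub{w}(x_1,\dots,x_n)\equiv\schub{w_0ww_0}(-x_n,\dots,-x_1)\pmod{\sym_n^+}$ to identify $\omega\schub{w_0ww_0}$ with $(-1)^r\schub{w}$ modulo $\sym_{r+1}^+$, after which the vanishing of qDS on the appropriate graded piece of $\sym_{r+1}^+$ (a special case of Corollary~\ref{cor:rho vanishes on qsym_+}, since $\sym_n^+\subset\qsym_n^+$) finishes the argument.

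In more detail, the key transformation is the following. Writing $\omega\schub{w_0ww_0}(x_1,\dots,x_n)=\schub{w_0ww_0}(x_n,\dots,x_1)$ and substituting $x_i\leftrightarrow -x_i$ in the given Schubert identity (which preserves $\sym_n^+$ since it is homogeneous), we get
\[
\schub{w_0ww_0}(x_n,\dots,x_1)\equiv \schub{w}(-x_1,\dots,-x_n)=(-1)^r\schub{w}(x_1,\dots,x_n)\pmod{\sym_{r+1}^+},
\]
using homogeneity of $\schub{w}$ in degree $r$. Plugging this into the symmetry identity above yields
\[
a_{w_0ww_0}(q)=(-1)^r q^{\binom{r}{2}}\cdot(-1)^r\,\ds{\schub{w}}_{r+1}^{q^{-1}}=q^{\binom{r}{2}}a_w(q^{-1}),
\]
which is the claim.

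I anticipate no serious obstacle: both crucial facts (the symmetry identity for qDS, and the Schubert conjugation identity) are handed to us, and the Schubert polynomials being homogeneous of degree $r=n-1$ makes the sign bookkeeping automatic. The only place to be a bit careful is that $\ds{\cdot}_{r+1}^{q^{-1}}$ genuinely kills the degree-$r$ part of $\sym_{r+1}^+$, which requires $q^{-1}$ not to be a nontrivial root of unity; but the blanket hypothesis on $q$ in the paper includes this case symmetrically (indeed, Remark~\ref{rem:kly_symmetry} already emphasizes the $q\leftrightarrow q^{-1}$ symmetry).
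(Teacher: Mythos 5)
Your proposal is correct and follows exactly the route the paper indicates: combine Kim's formula $a_w(q)=\ds{\schub{w}}_{r+1}^q$ with the qDS symmetry relation and the Schubert conjugation identity modulo $\sym_{r+1}^+$, then invoke the vanishing of qDS on $\sym_{r+1}^+$. The sign bookkeeping and the observation that the root-of-unity hypothesis is $q\leftrightarrow q^{-1}$ symmetric are both handled correctly.
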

\noindent This relation can also be easily derived from~\eqref{eq:awq formula}.
The symmetry $a_w=a_{w^{-1}}$ no longer lifts to something obvious in the presence of the $q$. For instance, check that $a_{153264}(q)=q^7 + 2q^6 + 3q^5 + q^4 + q^3$ and $a_{143625}(q)=q^8 + 2q^7 + 2q^6 + 2q^5 + q^4$.

In the two subsections that follow we give $q$-analogues for certain results that hold for $a_w$; see  \cite[\S 5.2 and \S 6]{NT20}. The techniques employed in loc. cit. are robust enough to allow for incorporating the $q$ parameter  easily, and so our exposition will be brief and we refer the reader to \cite{NT20} for further details and any undefined terminology.

\subsection{Properties of $a_w(q)$}
\label{sub:properties of a_w}

In this section we establish two summatory properties of the numbers $a_w$, based on the notion of factorization of a permutation. We borrow notation from \cite[\S]{NT20}, and refer the reader to loc. cit. for details in the interest of brevity.

We can factor $w$ in $\sgrp_n$ uniquely into indecomposable permutations as
\begin{equation}
\label{eq:indecomposables}
w=w_1\times w_2\times\cdots \times w_k,
\end{equation}
where each $w_i$ is an indecomposable permutation in $\sgrp_{m_i}$ for some $m_i>0$.
We say that $w$ is {\em quasiindecomposable} if exactly one $w_i$ is different from $1$, i.e. $w=1^{i}\times u\times1^{j}$ for $u$ indecomposable $\neq 1$ and integers $i,j\geq 0$.
Given this decomposition, the \emph{cyclic shifts} $w^{(1)},\ldots,w^{(k)}$ of $w$ are given by
\begin{equation}
\label{eq:cyclic_shifts}
w^{(i)}=(w_i\times w_{i+1} \cdots \times w_k) \times (w_1\times \cdots \times w_{i-1}).
\end{equation}

These notions are very natural in terms of reduced words: Let the \emph{support} of $w\in \sgrp_n$ be the set of letters in $[n-1]$ that occur in any reduced word for $w$. Then $w$ is indecomposable if and only if it has full support $[n-1]$. It  is quasiindecomposable if its support is an interval in $\mathbb{Z}_{>0}$.

\begin{theorem}[Cyclic Sum Rule]
\label{thm:cyclic_sum}
Let $w\in \sgrpp_n$, and consider its cyclic shifts $w^{(1)},\ldots,w^{(k)}$ defined by way of ~\eqref{eq:indecomposables} and~\eqref{eq:cyclic_shifts}. We have that
\begin{equation}
\label{eq:cyclic_sum}
\sum_{i=1}^k a_{w^{(i)}}(q)= \sum_{{\bf a}\in \reduced(w)}q^{\comaj({\bf a})}.
\end{equation}
\end{theorem}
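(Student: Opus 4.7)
The plan is to use formula~\eqref{eq:awq formula} derived from the $q$-Klyachko--Macdonald identity (Theorem~\ref{th:qKM}) and reduce the statement to an interplay between the cyclic sum rule for remixed Eulerian numbers (Proposition~\ref{prop:cyclic sum remixed}) and an auxiliary combinatorial invariance of the $\comaj$ generating function under cyclic rotation of the indecomposable factorization.

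Concretely, write
\[
\sum_{i=1}^k a_{w^{(i)}}(q)=\frac{1}{\qfact{r}}\sum_{i=1}^k\sum_{\mathbf{b}\in\reduced(w^{(i)})}q^{\comaj(\mathbf{b})}A_{\cont(\mathbf{b})}(q).
\]
First I would show that the reduced words of the various $w^{(i)}$ are naturally parametrized by pairs $(\underline{\mathbf{a}},\sigma)$, where $\underline{\mathbf{a}}=(\mathbf{a}^{(1)},\ldots,\mathbf{a}^{(k)})$ is a choice of reduced word for each indecomposable factor $w_j$ and $\sigma$ is a shuffle of the factor letters. For fixed $(\underline{\mathbf{a}},\sigma)$ one obtains a reduced word $\mathbf{b}^{(i)}\in\reduced(w^{(i)})$ for each $i$ by the letter-shifting bijection $\psi_i$ induced by the cyclic reordering of the factors. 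A direct check of the type carried out before Theorem~\ref{thm:uniqueness_via_recurrence_petrov} shows that, since each $w_j$ is indecomposable, the $k$ contents $\cont(\mathbf{b}^{(i)})$ are precisely the elements of $\mathsf{Cyc}(\cont(\mathbf{b}^{(1)}))$ — there are exactly $k$ of them because $\widetilde{\cont(\mathbf{b}^{(1)})}$ has exactly $k$ zeros, one per factor gap.

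With this grouping, the sum becomes
\[
\frac{1}{\qfact{r}}\sum_{\underline{\mathbf{a}}}\sum_\sigma\sum_{i=1}^k q^{\comaj(\mathbf{b}^{(i)}(\sigma))}\,A_{\cont(\mathbf{b}^{(i)}(\sigma))}(q).
\]
The cyclic sum rule $\sum_{c'\in\mathsf{Cyc}(c)}A_{c'}(q)=\qfact{r}$ invites the following key lemma, which I would aim to establish next: \emph{for each fixed $\underline{\mathbf{a}}$, the generating function $G_{\underline{\mathbf{a}}}^{(i)}(q):=\sum_\sigma q^{\comaj(\mathbf{b}^{(i)}(\sigma))}$ is independent of $i$.} Granting this, write $G_{\underline{\mathbf{a}}}(q)$ for the common value; since $\comaj(\mathbf{b}^{(i)})$ decomposes as a within-factor contribution (identical for all $i$) plus a cross-factor contribution, one can exchange the $i$-sum and $\sigma$-sum and pull out $G_{\underline{\mathbf{a}}}(q)$, reducing the inner sum to $G_{\underline{\mathbf{a}}}(q)\cdot\sum_{c'\in\mathsf{Cyc}(c)}A_{c'}(q)=G_{\underline{\mathbf{a}}}(q)\cdot\qfact{r}$. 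Summing $G_{\underline{\mathbf{a}}}(q)$ over all $\underline{\mathbf{a}}$ recovers exactly $\sum_{\mathbf{a}\in\reduced(w)}q^{\comaj(\mathbf{a})}$ by picking the representative $i=1$, completing the proof.

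The hard part will be the key lemma on cyclic invariance of the shuffle $\comaj$-generating function. This is the genuine new content beyond \cite[Theorem 1.5]{NT20} (which is the trivial $q=1$ shadow). The cross-factor ascents get systematically flipped as we move the cyclic cut, so no single shuffle preserves $\comaj$; but numerical experiments (e.g.\ $w=32154$ with $w^{(2)}=21543$) suggest that some nontrivial involution on shuffles exchanges the contributions of affected positions. I would attempt this lemma either by induction reducing to the two-factor case $w=w_1\times w_2$ versus $w_2\times w_1$ — where it becomes a $q$-shuffle identity on two words with letters in disjoint intervals, amenable to a direct bijection moving each boundary-straddling descent/ascent pair — or by a more conceptual argument working inside $\kly_+\otimes NC_n$ using the Yang--Baxter relation (Lemma~\ref{lemma:Yang_Baxter}) to convert cyclic shift of factors into a rearrangement of the product~\eqref{eq:schubert_nilcoxeter_factorization}, which preserves the coefficient of $u_{[1,r]}$ by its shift-invariance.
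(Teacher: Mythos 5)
Your architecture is correct and is essentially a reconstruction of the argument the paper delegates to \cite[Theorem 5.6]{NT20}: start from \eqref{eq:awq formula}, parametrize $\reduced(w^{(i)})$ by pairs $(\underline{\mathbf{a}},\sigma)$, observe that for fixed $\underline{\mathbf{a}}$ the content $\cont(\mathbf{b}^{(i)}(\sigma))$ does not depend on $\sigma$ and that the $k$ contents $c^{(i)}(\underline{\mathbf{a}})$, $i=1,\dots,k$, run over $\mathsf{Cyc}(c^{(1)}(\underline{\mathbf{a}}))$, and feed this into Proposition~\ref{prop:cyclic sum remixed}. The remaining piece is exactly the invariance in $i$ of the shuffle generating function $G_{\underline{\mathbf{a}}}^{(i)}(q)$, as you correctly identify.

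The gap is that you leave this as a conjectured ``key lemma'' with only sketched approaches, while simultaneously overstating both its difficulty and its novelty. It is not genuine new content; it is a direct application of the classical Garsia--Gessel $q$-shuffle identity. For words $\alpha,\beta$ of lengths $p,q$ on disjoint alphabets with one alphabet entirely below the other,
\[
\sum_{\gamma\in\alpha\,\shuff\,\beta}q^{\comaj(\gamma)}=q^{\comaj(\alpha)+\comaj(\beta)}\qbin{p+q}{p},
\]
and since $\qbin{p+q}{p}=\qbin{p+q}{q}$ this expression is symmetric under flipping which alphabet is the larger one. Iterating over the $k$ factors gives $G_{\underline{\mathbf{a}}}^{(i)}(q)=q^{\sum_j\comaj(\mathbf{a}_j)}\cdot\qfact{\ell}/\prod_j\qfact{\ell_j}$, manifestly independent of $i$ (indeed independent of any reordering of the factors, not merely cyclic shifts). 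So the gap is real but closes by citing a standard identity, not by constructing a new involution or taking the heavier Yang--Baxter route you propose; your observation that ``no single shuffle preserves $\comaj$'' is correct but a distraction, since only equality of distributions is needed. Incidentally, the $q=1$ proof in NT20 uses exactly the degenerate form of this lemma (the number of shuffles is order-independent), which is why ``works virtually unchanged'' quietly hides the one place where genuine $q$-input enters.
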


\begin{proof}
The proof presented in \cite[Theorem 5.6]{NT20} works virtually unchanged. Instead of appealing to the cyclic sum rule for (ordinary) mixed Eulerian numbers, we invoke Proposition~\ref{prop:cyclic sum remixed}.
\end{proof}

\begin{remark}
In his seminal paper on  reduced word enumeration, Stanley \cite{St84} introduced a family of symmetric functions $({\sf F}_w)_{w\in \sgrp_{\infty}}$ that have come to be called \emph{Stanley symmetric functions}. By work of Edelman-Greene \cite{EG87} and Lascoux-Sch\"utzenberger \cite{LS85} we have the Schur-positive expansion ${\sf F}_w=\sum_{\lambda}a_{w\lambda}s_{\lambda}$,
where the $a_{w\lambda}$ may be computed using the \emph{Lascoux-Sch\"utzenberger tree}.
From the Edelman-Greene correspondence one can then show
\[
\sum_{{\bf a}\in \reduced(w)}q^{\comaj({\bf a})}=\sum_{\lambda}a_{w\lambda}\sum_{T\in \mathrm{SYT}(\lambda)} q^{\maj(T)},
\]
which implies the right-hand side in Theorem~\ref{thm:cyclic_sum} may be computed using the major index statistic on tableaux. In the case of $w$ vexillary, we have exactly one nonzero $a_{w\lambda}$ so the right-hand side simplifies enormously.
\end{remark}

\begin{example}
Let $w=53124768\in \sgrpp_8$.
We have $w=53124 \times 21\times 1$, and hence its three cyclic shifts are
$w^{(1)}=53124 \times 21\times 1= 53124768$, $w^{(2)}= 21 \times 1\times 53124=21386457$, and $w^{(3)}= 1\times 53125\times 21=16423587$.
One computes that
\begin{align*}
a_{w^{(1)}}(q)&=q^7 + q^6 + q^5 + q^4 + q^3 + q^2,\nonumber\\
a_{w^{(2)}}(q)&=q^{14} + 2q^{13} + 3q^{12} + 3q^{11} + 3q^{10} + 3q^9 + 3q^8 + 2q^7 + q^6,\nonumber\\
a_{w^{(3)}}(q)&=q^{12} + 2 q^{11} + 4 q^{10} + 5 q^{9} + 6 q^{8} + 5 q^{7} + 5 q^{6} + 4 q^{5} + 3 q^{4} + q^{3}.
\end{align*}
One may now check that $a_{w^{(1)}}(q)+a_{w^{(2)}}(q)+a_{w^{(3)}}(q)$ indeed tracks $\mathrm{comaj}$ over  $\reduced(w)$. Finally we also note that the Stanley symmetric function ${\sf F}_w$ in this case expands as
${\sf F}_w=s_{421}+s_{43}+s_{52}$.
One can sum the major index generating function over tableaux of shape $421$, $43$, and $52$ to obtain $a_{w^{(1)}}(q)+a_{w^{(2)}}(q)+a_{w^{(3)}}(q)$ again.
\end{example}

Our next result considers $a_w(q)$ when $w$ is quasiindecomposable and provides a simple way to compute them in terms of specializations of Schubert polynomials.
For a permutation $v$ of length $\ell$ and $m\geq 0$, consider
\begin{equation}
\label{eq:nu_m}
\nu_v(m)\coloneqq \nu_{1^m\times v}= \schub{1^m\times v}(1,q,q^2,\ldots).
\end{equation}
\begin{theorem}
\label{thm:quasiindecomposable}
Assume that $v\in\sgrp_{p+1}$ is \emph{indecomposable} of length $n-1$.
Define quasiindecomposable permutations $v^{[m]}\in\sgrpp_n$ for $m=0,\ldots,n-p-1$ by $v^{[m]}\coloneqq 1^{m}\times v\times1^{n-p-1-m}$.
Then
one has
\begin{equation}
\label{eq:summation_indecomposable}
\sum_{j\geq 0}\nu_v(j)t^j=\frac{\sum_{m=0}^{n-p-1} a_{v^{[m]}}(q)t^m}{(t;q)_n}.
\end{equation}
\end{theorem}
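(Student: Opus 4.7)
The plan is to expand both sides of~\eqref{eq:summation_indecomposable} by combining Theorem~\ref{th:qKM} (the $q$-Klyachko--Macdonald identity), the specialization $\spec$, and Proposition~\ref{prop:connected_men_gf} (the principal specialization formula for connected remixed Eulerian numbers), then to verify that the coefficients of $t^m$ on both sides coincide.

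First I would observe that every reduced word of $1^j\times v$ is a shift $(a_1+j,\ldots,a_{n-1}+j)$ of a reduced word $\mathbf{a}\in\reduced(v)$, and that $\comaj$ is invariant under this shift. Since $v$ is indecomposable in $\sgrp_{p+1}$, a standard property of reduced words (braid and commutation moves preserve the set of generators used) implies that every reduced word of $v$ has support equal to $[p]$; hence $\cont(\mathbf{a})=(c_1,\ldots,c_p)$ is always a strong composition of $n-1$ of length exactly $p$. Setting
\[
B_c(q)\coloneqq\sum_{\substack{\mathbf{a}\in\reduced(v)\\\cont(\mathbf{a})=c}}q^{\comaj(\mathbf{a})},
\]
applying Theorem~\ref{th:qKM} to $1^j\times v$, followed by $\spec:u_i\mapsto\qint{i}$, yields
\[
\nu_v(j)=\frac{1}{\qfact{n-1}}\sum_{\substack{c\vDash n-1\\\ell(c)=p}}B_c(q)\prod_{i=1}^{p}\qint{j+i}^{c_i}.
\]

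Next, I would multiply by $t^j$, sum over $j\ge 0$, and invoke Proposition~\ref{prop:connected_men_gf} (with $r=n-1$ and $k=p$) on each inner sum, obtaining
\[
\sum_{j\ge 0}\nu_v(j)\,t^j=\frac{1}{\qfact{n-1}(t;q)_n}\sum_{m=0}^{n-1-p}t^m\sum_{c}B_c(q)\,A_{0^mc\,0^{n-1-p-m}}(q).
\]

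Finally, I would extract the coefficient of $t^m$ and identify it with $a_{v^{[m]}}(q)$. Since the reduced words of $v^{[m]}=1^m\times v\times 1^{n-p-1-m}$ are precisely the shifts by $m$ of reduced words of $v$, with $\comaj$ preserved and content transformed into $0^m\,\cont(\mathbf{a})\,0^{n-1-p-m}$, formula~\eqref{eq:awq formula} gives
\[
a_{v^{[m]}}(q)=\frac{1}{\qfact{n-1}}\sum_{\mathbf{a}\in\reduced(v)}q^{\comaj(\mathbf{a})}\,A_{0^m\,\cont(\mathbf{a})\,0^{n-1-p-m}}(q)=\frac{1}{\qfact{n-1}}\sum_{c}B_c(q)\,A_{0^mc\,0^{n-1-p-m}}(q),
\]
which matches the coefficient computed above. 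The essential point requiring care is the verification that $\cont(\mathbf{a})$ is a strong composition of length exactly $p$; this is precisely the hypothesis required to apply Proposition~\ref{prop:connected_men_gf}, and it holds exactly because $v$ is indecomposable. Beyond this observation, the proof is bookkeeping.
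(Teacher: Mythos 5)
Your proof is correct and, as far as one can reconstruct it, follows essentially the same route as the paper's primary proof (which is deferred to the $q=1$ case in \cite[Theorem~5.8]{NT20} with the observation about the comaj-preserving shift bijection supplied in the text): apply Theorem~\ref{th:qKM} to $1^j\times v$, take $\spec$, group reduced words by content, feed each content class into Proposition~\ref{prop:connected_men_gf}, and reassemble via formula~\eqref{eq:awq formula}. You correctly isolate the one nontrivial verification, namely that indecomposability of $v$ forces every $\cont(\mathbf{a})$ to be a strong composition of length exactly $p$, which is precisely what licenses the application of Proposition~\ref{prop:connected_men_gf}. Note that the paper also gives a second, more conceptual derivation of this theorem in \S\ref{sec:interval}: it verifies that the back stable Schubert polynomial $\overleftarrow{\mathfrak{S}}_v$ satisfies the three axioms of Proposition~\ref{prop:compatible_specializations} (the interval property framework), so that \eqref{eq:summation_indecomposable} falls out as a single instance of a general machine that simultaneously covers \eqref{eq:u^c principal} and \eqref{eq:fundamental ps}. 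Your computation buys transparency about where indecomposability is used; the interval-property proof buys a uniform explanation of why these three otherwise unrelated families satisfy the same shape of identity.
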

\begin{proof}
One only needs to follow the script of \cite[Theorem 5.8]{NT20} making necessary changes along the way,  taking into account the fact that the map  sending $i_1\cdots i_{n-1}\mapsto (i_1+m)\cdots (i_{n-1}+m)$ is a comaj-preserving bijection between $\reduced(v)$ and $\reduced(v^{[m]})$ for $m=0,\ldots,n-p-1$.
\end{proof}

\begin{example}
	Consider $n=7$ and $v=4321\in \sgrp_4$ an indecomposable permutation of length $6$.
	We have that $v^{[0]}=4321567$, $v^{[1]}=1543267$, $v^{[2]}=1265437$, and $v^{[3]}=1237654$.
	One may check that
	\[
		\sum_{j\geq 0}\nu_u(j)t^j=\frac{q^4+(q^{8} + 2 q^{7} + 2 q^{6} + 2 q^{5})t+(2 q^{10} + 2 q^{9} + 2 q^{8} + q^{7})t^2+q^{11}t^3}{(t;q)_7}.
	\]
	Take particular note of the fact that coefficients in the numerator on the right hand side are all positive, which is a priori not immediate.
	Theorem~\ref{thm:quasiindecomposable} then tells us that $a_{v^{[0]}}=q^4$, $a_{v^{[1]}}=q^{8} + 2 q^{7} + 2 q^{6} + 2 q^{5}$, $a_{v^{[2]}}=2 q^{10} + 2 q^{9} + 2 q^{8} + q^{7}$, and $a_{v^{[3]}}=q^{11}$.
\end{example}

\subsection{Combinatorial interpretation for $a_w(q)$ in special cases}
\label{sub:special aw}
We discuss two nice classes of permutations\textemdash{} {\L}ukasiewicz permutations and Grassmannian permutations \textemdash{} beginning with the former.

\subsubsection{{\L}ukasiewicz permutations}
\label{subsub:Luk}

In \cite[Definition 6.1]{NT20} we define a {\L}ukasiewicz composition to be a vector $(c_1,\dots,c_{n})$ with weight $n-1$ such that $c_1+\cdots+ c_k \geq k$ for all $1\leq k\leq n-1$. A {\L}ukasiewicz permutation in $\sgrp_n$ is simply one whose code is a {\L}ukasiewicz composition. We recall that the code of $w\in\sgrp_n$ is the vector $(c_1,\dots,c_n)$ where $c_i$ is the number of $j>i$ such that $w_i>w_j$.

\begin{proposition}
\label{prop:aw Lukasiewicz}
For $w$ a {\L}ukasiewicz permutation in $\sgrpp_n$, we have
\[
a_w(q)=\mathfrak{S}_{w}(1,q,\dots,q^{n-1}).
\]
\end{proposition}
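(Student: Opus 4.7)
The plan is to combine equation \eqref{eq:kim_formula} with Theorem~\ref{thm:abb_qds}. Indeed, \eqref{eq:kim_formula} gives $a_w(q)=\ds{\schub{w}}_n^q$. Writing $\schub{w}=g+h$ according to the Aval--Bergeron--Bergeron decomposition $\bQ[\alpx_n]=L_n\oplus \qsym_n^+$, Theorem~\ref{thm:abb_qds} then yields $\ds{\schub{w}}_n^q=g(1,q,\ldots,q^{n-1})$. Consequently, if we can show that for $w$ Łukasiewicz one has $\schub{w}\in L_n$ (so that $g=\schub{w}$ and $h=0$), the proposition follows at once from $a_w(q)=\ds{\schub{w}}_n^q=\schub{w}(1,q,\ldots,q^{n-1})$. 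The entire task therefore reduces to establishing this containment $\schub{w}\in L_n$.

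To prove $\schub{w}\in L_n$, I would use the Billey--Jockusch--Stanley pipe dream formula $\schub{w}=\sum_D x^{c(D)}$, summed over reduced pipe dreams $D$ of $w$, where $c(D)=(c_1(D),\ldots,c_n(D))$ records the number of crosses in each row. As recalled before Theorem~\ref{thm:abb_qds}, a degree $n-1$ monomial $x^c$ belongs to $L_n$ if and only if its exponent vector $c$ is a Łukasiewicz composition, i.e.\ $c_1+\cdots+c_k\geq k$ for all $k<n$. So it suffices to verify, for every reduced pipe dream $D$ of the Łukasiewicz permutation $w$, that $c_1(D)+\cdots+c_k(D)\geq k$ for each $k<n$.

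The key ingredient here is a standard monotonicity property of reduced pipe dreams: for any $v\in \sgrp_n$ with Lehmer code $(c_1(v),\ldots,c_n(v))$ and any reduced pipe dream $D$ of $v$, one has
\[
c_1(D)+\cdots+c_k(D)\geq c_1(v)+\cdots+c_k(v) \qquad \text{for every } k.
\]
This is classical (Bergeron--Billey): all reduced pipe dreams of $v$ are connected by chute moves starting from the unique \emph{bottom} pipe dream, whose row content is exactly the code $c(v)$, and each chute move transfers a cross to a strictly smaller row index, so the partial sums $c_1(D)+\cdots+c_k(D)$ can only increase. Applied to $v=w$, the Łukasiewicz hypothesis $c_1(w)+\cdots+c_k(w)\geq k$ then forces $c_1(D)+\cdots+c_k(D)\geq k$ for every reduced pipe dream $D$ of $w$, so every monomial in the BJS expansion of $\schub{w}$ lies in $L_n$, hence $\schub{w}\in L_n$ and the proof concludes as in the first paragraph. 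The main obstacle is this chute-move monotonicity; it is standard but not cited earlier in the paper, so it either needs to be invoked from the literature or verified directly by induction on the number of chute moves from the bottom pipe dream.
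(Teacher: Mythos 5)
Your proof is correct and is, at its core, the same argument as the paper's, which tersely says "similar to \cite[Theorem 6.5]{NT20}\ldots except that one eventually employs $\ds{x_1^{c_1}\cdots x_n^{c_n}}_n^q=q^{\sum(i-1)c_i}$ for $c$ Łukasiewicz." Both proofs rest on the BJS expansion plus the fact that every reduced pipe dream of a Łukasiewicz permutation has Łukasiewicz row-content vector, and you supply the key combinatorial ingredient (Bergeron--Billey move monotonicity) that the paper defers entirely to NT20. The one genuine difference is purely a matter of routing: the paper applies the monomial qDS formula $\ds{\alpx^c}_n^q=q^{N(c)}$ term by term, whereas you observe that the same pipe-dream fact shows $\schub{w}\in L_n$ and then invoke Theorem~\ref{thm:abb_qds} as a black box. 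Your repackaging is a bit cleaner structurally because it isolates the statement $\schub{w}\in L_n$ as the crux, but it buys nothing new mathematically since Theorem~\ref{thm:abb_qds} is itself proved by the same monomial formula. One small terminological quibble: in Bergeron--Billey's language, the moves emanating from the bottom pipe dream $D_{\mathrm{bot}}(w)$ (whose row content is exactly the Lehmer code) and moving a cross to a strictly \emph{smaller} row index are the \emph{ladder} moves; chute moves are their transposes and go the other way. The monotonicity of partial row sums under ladder moves is exactly as you describe, so the content of your argument is correct.
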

\begin{proof}
The proof is similar to that presented in \cite[Theorem 6.5]{NT20}, except that one eventually employs the equality $\ds{x_1^{c_1}\cdots x_n^{c_n}}=q^{\sum_{1\leq i\leq n}(i-1)c_i}$ for $c$ a {\L}ukasiewicz composition to conclude.
\end{proof}

\begin{remark}
\label{rem:dominant cox}
Two classes of permutations in $\sgrpp_n$ that are {\L}ukasiewicz are \emph{dominant} permutations and \emph{Coxeter elements}.
A permutation is dominant if it is $132$-avoiding, which in turn implies that its code can naturally be interpreted as a partition of $n-1$. For dominant $w$, the Schubert polynomial $\mathfrak{S}_w(x_1,\dots,x_n)$ is a single monomial. If we let $\mathrm{code}(w)=(\lambda,0^{n-\ell(\lambda)})$, then $a_w(q)=q^{N(\lambda)}$. Equation~\eqref{eq:awq formula} then implies the equality that $\sum_{\mathbf{a}\in \reduced(w)}q^{\comaj({\bf a})}A_{\cont({\bf a})}=q^{N(\lambda)}\qfact{n-1}$, which is reminiscent of the cyclic sum rule in Proposition~\ref{prop:cyclic sum remixed}.

A Coxeter element in $\sgrpp_{n}$ is a permutation $w$ such that any reduced word for $w$ contains one instance each of $s_1$ through $s_{n-1}$. As described in \S 6.3, we associate $I_w\subset [n-2]$ with any Coxeter element $w$ including $i\in [n-2]$ in $I_w$ if $s_i$ appears before $s_{i+1}$ in any reduced word for $w$. We then have the analogue of \cite[Proposition 6.13]{NT20}:
\[
a_w(q)=\sum_{\substack{\sigma\in \sgrp_{n-1}\\ \Dsc(\sigma)=I_w}}q^{\maj(\sigma^{-1})}
\]
An alternative description involves the major index distribution over all Young tableaux of the ribbon shape associated with $I_w$. 
\end{remark}

\subsubsection{Grassmannian permutations}
A permutation is said to be \emph{Grassmannian} if it has a unique descent. If this descent is in position $m$, we say that the permutation is \emph{$m$-Grassmannian}. The partition $\lambda(w)$ obtained by sorting the entries of the code of $w$ in nonincreasing order (and omitting 0s) is called the \emph{shape} of $w$. For instance, $w=146235$ is $3$-Grassmannian. Since the code $w$ is $(0,2,3,0,0,0)$, we have $\lambda(w)=(3,2)$.
Note that together with the position of the first descent, the shape allows us to reconstruct a unique Grassmannian permutation interpreted as an element of $\sgrp_{\infty}$.

Given a partition $\lambda$, we let $\mathrm{SYT}(\lambda)$ denote the set of standard Young tableaux of shape $\lambda$. Recall that a positive integer $i$ is a descent of $T\in \mathrm{SYT}(\lambda)$ if $i+1$ occupies some row strictly below that occupied by $i$. Let $\mathrm{SYT}(\lambda,d)$ be the subset of $\mathrm{SYT}(\lambda)$ comprising tableaux with exactly $d$ descents.

\begin{proposition}
\label{prop:aw grassmannian}
Let $w\in \sgrpp_n$ be a $m$-Grassmannian permutation.
Then we have
\[
a_w(q)=\sum_{T\in \mathrm{SYT}(\lambda(w),m-1)}q^{\maj(T)}.
\]
\end{proposition}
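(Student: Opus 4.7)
The plan is to combine Gessel's fundamental-quasisymmetric expansion of a Schur polynomial with the explicit formula for $\projmorph_+(F_\alpha)$ established in Theorem~\ref{thm:rho_fundamental}. For an $m$-Grassmannian permutation $w \in \sgrpp_n$ of shape $\lambda = \lambda(w)$, it is classical that $\schub{w} = s_\lambda(x_1, \dots, x_m)$ and $|\lambda| = \ell(w) = n-1$. First I would use Gessel's identity
\[
s_\lambda(x_1, \dots, x_m) = \sum_{T \in \mathrm{SYT}(\lambda)} F_{\comp(\Dsc(T))}(x_1, \dots, x_m),
\]
apply $\projmorph_+$ termwise, and invoke Theorem~\ref{thm:rho_fundamental}. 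Since the latter produces $\tfrac{q^{\n(\alpha)}}{\qfact{n-1}} u_{[1,n-1]+m-\ell(\alpha)}$ for each $F_\alpha(x_1,\dots,x_m)$, only tableaux with $\ell(\comp(\Dsc(T))) = m$, equivalently $|\Dsc(T)| = m-1$, contribute a term supported on $u_{[1,n-1]}$.

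Next, by Theorem~\ref{th:qDS_and_K}, $a_w(q) = \ds{\schub{w}}_n^q$ equals $\qfact{n-1}$ times the coefficient of $u_{[1,n-1]}$ in $\projmorph_+(\schub{w})$. Combining this with the previous step yields
\[
a_w(q) = \sum_{T \in \mathrm{SYT}(\lambda,\, m-1)} q^{\n(\comp(\Dsc(T)))}.
\]
A short Abel-summation calculation then confirms that if $\Dsc(T) = \{d_1 < \dots < d_{m-1}\}$, then $\n(\comp(\Dsc(T))) = (m-1)(n-1) - \maj(T)$, which is precisely the comajor index $\comaj(T) \coloneqq \sum_{i \in \Dsc(T)} (n-1-i)$.

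The last step, and the only point requiring external input, is the passage from $\comaj$ to $\maj$, which will be the main obstacle in the plan. To handle it I would invoke the Sch\"utzenberger evacuation involution $T \mapsto T^*$ on $\mathrm{SYT}(\lambda)$: it is a well-known bijection satisfying $\Dsc(T^*) = \{(n-1)-i : i \in \Dsc(T)\}$, so it preserves the number of descents and restricts to an involution on $\mathrm{SYT}(\lambda, m-1)$ with $\maj(T^*) = \comaj(T)$. Re-indexing the sum through $T \mapsto T^*$ then gives
\[
\sum_{T \in \mathrm{SYT}(\lambda, m-1)} q^{\comaj(T)} = \sum_{T \in \mathrm{SYT}(\lambda, m-1)} q^{\maj(T)},
\]
which completes the argument. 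Apart from this evacuation step, every ingredient is a direct application of results proved earlier in the paper.
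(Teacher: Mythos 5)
Your proposal is correct and matches the paper's own proof in all essentials: both decompose $\schub{w} = s_\lambda(x_1,\dots,x_m)$ via Gessel's fundamental-quasisymmetric expansion, reduce via the $\S\ref{sec:quasisymmetric}$ results (Theorem~\ref{thm:rho_fundamental}/Corollary~\ref{cor:fundamental}) to the statement $a_w(q) = \sum_{T\in\mathrm{SYT}(\lambda,m-1)} q^{(m-1)(n-1)-\maj(T)}$, and then pass from comajor to major index by re-indexing through Sch\"utzenberger's evacuation involution. Your write-up simply spells out the Gessel-expansion and $\n(\alpha)$-computation steps that the published proof compresses into the single sentence ``We follow the approach in~\cite{NT20} using our results from \S\ref{sec:quasisymmetric}.'' One minor shortcut you could have taken: Corollary~\ref{cor:fundamental} already records $\ds{F_\alpha(x_1,\dots,x_m)}_n^q = q^{\n(\alpha)}\delta_{m,\ell(\alpha)}$, so you could have applied qDS termwise to Gessel's expansion directly instead of passing through the $u_{[1,n-1]}$-coefficient-extraction phrasing of Theorem~\ref{th:qDS_and_K}, but this is a cosmetic difference.
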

\begin{proof}
We follow the approach in \cite[Proposition]{NT20} using our results from \S\ref{sec:quasisymmetric}.
Set $\lambda\coloneqq \lambda(w)$.
We infer that
\begin{align}
\label{eq:awq grassmannian nearly}
a_w(q)=\sum_{T\in \mathrm{SYT}(\lambda,m-1)}q^{(m-1)(n-1)-\maj(T)}.
\end{align}
Recall now that Sch\"utzenberger's evacuation map applied to $T$ results in a tableau $\mathrm{evac}(T)$ of the same shape and with the same number of descents. More importantly, if $i$ is a descent in $T$, then $n-1-i$ is a descent in $\mathrm{evac}(T)$. It follows that
$\maj(T)+\maj(\mathrm{evac}(T))=(m-1)(n-1)$
for all $T\in \mathrm{SYT}(\lambda,m-1)$.
As evacuation is an involution on $\mathrm{SYT}(\lambda,m-1)$, the claim follows.
\end{proof}
Incidentally the proof just presented tells us that $a_w(q)$ is a `palindromic' polynomial in $q$ because of the relation $q^{(m-1)(n-1)}a_w(q^{-1})=a_w(q)$.
\begin{example}
\label{ex:3-grassmannian aw}
Consider the $3$-Grassmannian permutation $w=146235$ with $\lambda(w)=(3,2)$.
Then
\[
a_w(q)=q^4+q^5+q^6
\]
from the following three tableaux:
\begin{align*}
\ytableausetup{mathmode,boxsize=1em}
\begin{ytableau}
2 & 4\\
*(blue!40)1 & *(blue!40)3 & 5
\end{ytableau}\hspace{5mm}
\begin{ytableau}
2 & 5\\
*(blue!40)1 & 3 & *(blue!40)4
\end{ytableau}
\hspace{5mm}
\begin{ytableau}
3 & 5\\
1 & *(blue!40)2 & *(blue!40)4
\end{ytableau}.
\end{align*}
\end{example}

We close by using the alternative expression for $a_w(q)$ in \eqref{eq:awq formula} and the preceding proposition to arrive at an evaluation for a class of remixed Eulerian numbers.

 \begin{corollary}
   \label{cor:special_Ac}
 Let $w\in \sgrpp_{r+1}$ be $m$-Grassmannian  of shape $\lambda$. For $1\leq i\leq r$, let $c_i$ be the number of cells in $\lambda $ that have content $i-m$. Then we have
 \[
 A_{(c_1,\dots,c_{r})}(q)=
 q^{-N(\lambda)}\left(\sum_{T\in \mathrm{SYT}(\lambda,m-1)}q^{\maj(T)}\right) \prod_{(i,j)\in \lambda}\qint{h(i,j)},
 \]
 where $h(i, j)= \lambda_i + \lambda_j'
-i - j + 1$ is the hook-length of the cell $(i, j)$ in $\lambda$.
 \end{corollary}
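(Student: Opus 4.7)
The plan is to apply the explicit expression \eqref{eq:awq formula} for $a_w(q)$ to the Grassmannian case and solve for $A_c(q)$, using Proposition~\ref{prop:aw grassmannian} to evaluate $a_w(q)$ and the $q$-hook length formula to evaluate the sum of $q^{\comaj}$ over reduced words.

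The first step is to observe that for an $m$-Grassmannian permutation $w$ of shape $\lambda$, every reduced word ${\bf a}\in\reduced(w)$ has the same content vector, and this vector is precisely the $(c_1,\dots,c_r)$ of the statement. This is most transparent through the Edelman--Greene correspondence: all reduced words of $w$ share a common insertion tableau $P$, whose cell $(i,j)\in\lambda$ carries the entry $m+j-i$. Thus the letter $\ell$ appears in ${\bf a}$ exactly $|\{(i,j)\in\lambda\colon m+j-i=\ell\}|$ times, which is the number of cells of content $\ell-m$. In particular, formula~\eqref{eq:awq formula} collapses to
\[
a_w(q)=\frac{A_c(q)}{\qfact{r}}\sum_{{\bf a}\in\reduced(w)}q^{\comaj({\bf a})}.
\]

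The second step is to evaluate the remaining sum. By the remark following Theorem~\ref{thm:cyclic_sum},
\[
\sum_{{\bf a}\in\reduced(w)}q^{\comaj({\bf a})}=\sum_{\mu}a_{w\mu}\sum_{T\in\mathrm{SYT}(\mu)}q^{\maj(T)}.
\]
For a Grassmannian permutation of shape $\lambda$, the Stanley symmetric function is the single Schur function $s_\lambda$, so $a_{w\mu}=\delta_{\mu\lambda}$; applying Stanley's $q$-hook length formula then gives
\[
\sum_{{\bf a}\in\reduced(w)}q^{\comaj({\bf a})}=\sum_{T\in\mathrm{SYT}(\lambda)}q^{\maj(T)}=\frac{q^{N(\lambda)}\qfact{r}}{\prod_{(i,j)\in\lambda}\qint{h(i,j)}}.
\]

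Finally, Proposition~\ref{prop:aw grassmannian} supplies $a_w(q)=\sum_{T\in\mathrm{SYT}(\lambda,m-1)}q^{\maj(T)}$. Substituting both evaluations into the displayed identity for $a_w(q)$ and solving for $A_c(q)$ yields the claimed formula. No step is genuinely hard: the one point that truly uses something beyond what is immediate from the text is the constancy of the content of reduced words for Grassmannian permutations; every other ingredient is either stated elsewhere in the paper or is a classical fact (the identification ${\sf F}_w=s_\lambda$ and the $q$-hook length formula).
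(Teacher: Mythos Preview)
Your proof is correct and follows essentially the same approach the paper intends: combine the expression~\eqref{eq:awq formula} with Proposition~\ref{prop:aw grassmannian}, use that Grassmannian permutations are fully commutative (being $321$-avoiding) so all reduced words share the content $c$ determined by the diagonals of $\lambda$, and evaluate $\sum_{{\bf a}\in\reduced(w)}q^{\comaj({\bf a})}$ via ${\sf F}_w=s_\lambda$ and Stanley's $q$-hook length formula. This is exactly the route of \cite[Corollary~6.18]{NT20} that the paper invokes, with the $q$ carried along.
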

\begin{proof}
The proof follows from the approach in \cite[Corollary 6.18]{NT20}.
\end{proof}

\begin{example}
We consider the same $3$-Grassmannian permutation as in Example~\ref{ex:3-grassmannian aw}. We get that $(c_1,
\dots,c_5)=(0,1,2,1,1)$, and Corollary~\ref{cor:special_Ac} says
\begin{align*}
A_{012110}(q)=q^{-2}(q^4+q^5+q^6)\qint{4}\qint{3}\qint{2}
=q^2(1+q)^2(1+q+q^2)^2(1+q^2).
\end{align*}
\end{example}
\begin{remark}
\label{rem:won't do vexillary}
In \cite[\S 7]{NT20}, the authors consider the case of $w$ a {\em vexillary} permutation. We gave a combinatorial interpretation for $a_w(1)$ in terms of certain special descents in tableaux, and one is led to believe that there should be a way to modify the arguments so that the $q$ can be incorporated. Given the intricacy of the original argument, we do not pursue this matter further in this article as that would lead us astray.
\end{remark}

\section{The interval property}
\label{sec:interval}
We now turn our attention to gaining a better understanding of a curious phenomenon encountered by the authors in \cite{DS,NT20}, which continues to manifest itself in the presence of the additional $q$ parameter.
The careful reader may have noticed a common underlying trait of  equations~\eqref{eq:summation_indecomposable}, \eqref{eq:fundamental ps} and \eqref{eq:u^c principal}.
Broadly speaking, each equation has a generating function tracking principal specializations on the left-hand side, and the right-hand side rewrites this as a rational function wherein the numerator records the qDS of `shifts' of the appropriate family of polynomials.
This resemblance acquires further mystery given that the objects involved \textemdash{} monomials whose exponent vector is a $\bN$-vector with interval support, quasisymmetric polynomials, and Schubert polynomials indexed by quasiindecomposable permutations\textemdash{} do not appear to possess obvious commonalities.
As we demonstrate below, this `coincidence' is explained by a characteristic possessed by the expansions in the basis $\mcbk$ (or $\mcbkp$) when one takes the images of these polynomials under $\proj$.
In doing so, we  also give a satisfying answer to a problem raised by the authors in \cite[\S 8.4]{NT20}.

\begin{definition} \textup{(}Interval property\textup{)}
 A polynomial $f\in\mathbf{k}[\alpz]$ (resp. $\mathbf{k}[\alpz_+]$) is said to possess the \emph{strong (resp. weak) interval property} if the expansion $\displaystyle\sum_{I\subset \bZ}c_Iu_I$ of $\proj(f)$ (resp. $\proj_+(f)$) in the basis $\mcbk$ (resp. $\mcbkp$) satisfies
 \[
 c_I\neq 0 \Longrightarrow \text{$I$ is an interval in $\bZ$}.
 \]
\end{definition}

Note that this is really a property of elements of $\kly$, resp. $\kly_+$; we will only use it for polynomials though, since in this section we use the Klyachko algebras as intermediate steps towards understanding certain formulas involving  principal specializations of polynomials.

 For instance, $z_1^2z_3$ does not possess the strong interval property as $\proj(f)=u_1^3u_3$ expands as $\frac{1}{q+1}(u_{\{1,2,3\}}+qu_{\{0,1,3\}})$ and $\{0,1,3\}$ is not an interval.
On the other hand if we work in $\kly_+$, we see that  $u_1^2u_3=\frac{1}{q+1}u_1u_2u_3$. Therefore, $z_1^2z_3$ possesses the weak interval property.

For brevity, we say that a polynomial is SIP (resp. WIP) if it possesses the  strong (resp. weak) interval property.
Polynomials that are SIP (resp. WIP)  form a vector space which we denote by $\mc{C}$ (resp. $\mc{C}_+$): in fact, using the defining ideals of $\kly$ and $\kly_+$, we have 
\[\mc{C}=\bigoplus_{I\text{ interval }\subset\bZ}\mathbf{k}z_I \oplus \ideal,\quad\mc{C}_+=\bigoplus_{I\text{ interval }\subset\bP}\mathbf{k}z_I \oplus \ideal_+.
\]

We now discuss the families of polynomials that concern us.
For starters note that the discussion after Theorem~\ref{th:expansion_f} already gives us an important family of polynomials in $\mathbf{k}[\alpz]$, whose merit can be measured by the fact that it explains the three motivating examples at the beginning of this section.

\begin{corollary}
\label{cor:interval monomials are SIP}
Let $c$ be an $\bN$-vector with interval support. Then $\alpz^c$ is $\mathrm{SIP}$.
\end{corollary}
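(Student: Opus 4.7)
The plan is to induct on $e(c)=|c|-|\supp(c)|$ and apply the squarefree reduction~\eqref{eq:recurrence_uc} directly, exploiting the fact that the interval-support hypothesis is preserved by both operations $L_i$ and $R_i$. Write $\supp(c)=[p,q]$, so that $\proj(\alpz^c)=u^c\in\kly$; the aim is to expand $u^c$ on the basis $\mcbk$ and show only intervals appear.

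When $e(c)=0$, the vector $c$ is $\{0,1\}$-valued on its support, so $u^c=u_{[p,q]}$ is already a single basis element indexed by an interval. When $e(c)\geq 1$, pick any $i$ with $c_i\geq 2$. Since $\supp(c)=[p,q]$ is an interval, the maximal subinterval $[a,b]\subset\supp(c)$ containing $i$ is $[p,q]$ itself. Applying~\eqref{eq:recurrence_uc} yields
\[
\qint{q-p+2}\, u^c = q^{i-p+1}\qint{q-i+1}\, u^{L_i(c)}+\qint{i-p+1}\, u^{R_i(c)},
\]
and from the definitions one reads off $\supp(L_i(c))=[p-1,q]$ and $\supp(R_i(c))=[p,q+1]$, both intervals, while $e(L_i(c))=e(R_i(c))=e(c)-1$. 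The inductive hypothesis expresses both $u^{L_i(c)}$ and $u^{R_i(c)}$ as $\mathbf{k}$-linear combinations of $u_J$ with $J$ an interval, and dividing through by the nonzero scalar $\qint{q-p+2}$ (using the standing assumption on $q$) gives the same for $u^c$.

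There is no real obstacle: once one notes the trivial fact that attaching a single adjacent integer to an interval still yields an interval, the induction runs mechanically. A non-inductive alternative would be to apply the already-established expansion~\eqref{eq:important for connected remixed} after conjugating by the shift automorphism $\tau^{p-1}$ of $\kly$; this route has the bonus of producing the explicit formula
\[
u^c=\frac{1}{\qfact{r}}\sum_{j=0}^{r-k} A_{(0^j,\,c',\,0^{r-j-k})}(q)\, u_{[p-j,\,p+r-j-1]},
\]
where $r=|c|$, $k=q-p+1$ and $c'=(c_p,\ldots,c_q)$, from which the SIP is manifest.
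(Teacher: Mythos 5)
Your proposal is correct. Your primary argument — induction on $e(c)$ via the local rewriting rule~\eqref{eq:recurrence_uc} — is a genuinely different and more elementary route than the paper's: the paper simply invokes the discussion following Theorem~\ref{th:expansion_f} (in particular~\eqref{eq:important for connected remixed}), which in turn rests either on the full strength of Theorem~\ref{th:expansion_f} or on Theorem~\ref{th:qDS_and_K} combined with shift-invariance. Your direct induction avoids any appeal to $q$-divided symmetrization altogether; the whole point is the observation you make explicit, that since the support is a single interval, the maximal interval appearing in~\eqref{eq:recurrence_uc} is the entire support, so both $L_i$ and $R_i$ extend it by one adjacent integer and therefore preserve the interval-support hypothesis while strictly decreasing $e(c)$. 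The trade-off is that the paper's route immediately yields the explicit expansion with remixed Eulerian coefficients, which you recover in your alternative paragraph after conjugating by the shift $\tau^{p-1}$ — that alternative is essentially the paper's own argument. One cosmetic caution: you reuse $q$ both as the deformation parameter and as the right endpoint of the interval, which makes an expression like $\qint{q-p+2}$ momentarily ambiguous; choosing a different letter for the endpoint would avoid the clash.
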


There are families of polynomials in $\mathbf{k}[\alpz_+]$ that are WIP which are also relevant as we aim to demonstrate next. It will help us to introduce a property of $\bN$-vectors that generalizes slightly what was referred to as the {\L}ukasiewicz property in \cite{NT20}.

\begin{definition}
\label{defi:quasiLuk}
Let $c=(c_1,c_2,\dots)$ be a nonzero $\bN$-vector with support in $\bP$. Let
\begin{align*}
  m_c\coloneqq \min\{j\geq 1~|~c_j>0\text{ and }c_{j+1}=0\}\text{ and } M_c\coloneqq \max\{j\geq 1~|~c_j>0\}.
\end{align*}
We call $c$ \emph{quasi {\L}ukasiewicz} if
\begin{align}
\label{eq:quasiLuk}
\sum_{i=1}^jc_i\geq j\text{ for }j=m_c,\ldots,M_c.
\end{align}
\end{definition}

\begin{remark} \label{rem:quasiLuk}
Write $\supp(c)$ as a union of maximal intervals $I_j=[a_j,b_j]$, $j=1,\dots,k$ with $b_j$ increasing. Note that $m_c=b_1$ while $M_c=b_k$. Then to verify that $c$ is {\L}ukasiewicz it is easily checked that it suffices to verify Property~\eqref{eq:quasiLuk} for $j=b_1,a_2-1,b_2,a_3-1,b_3,\cdots,a_k-1,b_k$. It follows that, for a given support, this property only depends on the sum of $c$ restricted to each maximal interval in this support.
\end{remark}

This family is a quite natural common generalization of two important special cases:
\begin{itemize}
\item If $c$ is quasi {\L}ukasiewicz with $c_1>0$, then $(c_1,\dots,c_{|c|+1})$ is a {\L}ukasiewicz composition in the sense of \cite{NT20}, that is the inequality~\ref{eq:quasiLuk} holds for $j=1,\dots,|c|$.
\item Any $c$ with interval support that satisfies $c_i=0$ for $i>|c|$ is quasi {\L}ukasiewicz.
\end{itemize}

We transport this notion to permutations:

\begin{definition}
A permutation $w$ is \emph{(quasi) {\L}ukasiewicz} if for all its reduced words $\mathbf{a}$ (equivalently, any of them\footnote{By Matsumoto's property, any two reduced words are related by a sequence of braid/commutation moves. It is easy to see, using for instance Remark~\ref{rem:quasiLuk}, that if $\cont(\mathbf{a})$ is quasi {\L}ukasiewicz, then it remains so after applying a braid/commutation move.}), the content $\cont(\mathbf{a})$ is (quasi) {\L}ukasiewicz.
\end{definition}
For instance, one can check that the permutation $32154\in \sgrp_{5}$ is {\L}ukasiewicz, say by picking the reduced word $\mathbf{a}=4121$ in which case $\cont(\mathbf{a})=(2,1,0,1)$. On the other hand, note that the permutation $1 \times 32154=143265$ is not quasi {\L}ukasiewicz, as the reduced word $\mathbf{a}=5232$ has $\cont(\mathbf{a})=(0,2,1,0,1)$, which does not satisfy the criteria of being quasi {\L}ukasiewicz.

\begin{remark}
In \cite{NT20} and in \S\ref{sec:qKM}, we called a permutation {\L}ukasiewicz if its code is {\L}ukasiewicz. This is not different than the current definition. Indeed, for $w\in \sgrp_n$, consider the reduced word read from its \emph{bottom pipe dream}. From \cite[Appendix A]{NT20} we have that $\cont(\mathbf{a})$ being {\L}ukasiewicz is equivalent to the code of $w$ being {\L}ukasiewicz.
\end{remark}

\begin{theorem}
  \label{th:important interval families}
 The following families of polynomials in $\mathbf{k}[\alpz_+]$ are WIP:
 \begin{enumerate}
 \item \label{it:connected_monomials} $\alpz^c$ for  $c$ quasi {\L}ukasiewicz.
  \item \label{it:connected_quasisymmetric} $\morph_+(P)$ for $P(x_1,\ldots,x_m)$ a quasisymmetric polynomial in $x_1,\ldots,x_m$.
  \item \label{it:connected_schub} $\morph_+(\schub{w})$ for $w$ a quasi {\L}ukasiewicz permutation.
\end{enumerate}
\end{theorem}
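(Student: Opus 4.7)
The plan is to dispose of (2) and (3) by reducing them to earlier results and to (1), respectively, so that the real work lies in establishing (1). For (2), I would exploit linearity: any $P(x_1,\dots,x_m)\in\qsym_m$ expands in the basis of fundamentals $F_\alpha(x_1,\dots,x_m)$, and Theorem~\ref{thm:rho_fundamental} identifies $\projmorph_+(F_\alpha(x_1,\dots,x_m))$ with a scalar multiple of the single squarefree monomial $u_{[1,r]+m-\ell(\alpha)}$ indexed by an interval in $\bP$. For (3), I would invoke the $q$-Klyachko--Macdonald identity (Theorem~\ref{th:main_4}) to write $\projmorph_+(\schub{w})=\tfrac{1}{\qfact{\ell}}\sum_{\mathbf{a}\in\reduced(w)}q^{\comaj(\mathbf{a})}u^{\cont(\mathbf{a})}$; when $w$ is quasi {\L}ukasiewicz the contents $\cont(\mathbf{a})$ are too, so (1) applies to every summand and WIP is preserved by the sum.

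For the main assertion (1) I would induct on the number $k$ of maximal intervals in $\supp(c)$. The base case $k=1$ follows directly from a shifted form of equation~\eqref{eq:important for connected remixed}: the expansion of $u^c$ in $\kly$ already involves only interval-indexed squarefree monomials, and passing to $\kly_+$ merely removes those intervals that exit $\bP$. Note that this base case requires only that $\supp(c)$ be a single interval, and not that $c$ be quasi {\L}ukasiewicz, a flexibility I will exploit below.

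For the inductive step $k\geq 2$, write $\supp(c)=I_1\sqcup\cdots\sqcup I_k$ with $I_j=[a_j,b_j]$ and $r_j:=\sum_{i\in I_j}c_i$, let $c^{(k)}$ be the restriction of $c$ to $I_k$, and set $c':=c-c^{(k)}$. Using the quasi {\L}ukasiewicz inequalities $\sigma_p^c\geq a_{p+1}-1$ for $p<k-1$ together with $\sigma_{k-1}^c\geq a_k-1>b_{k-1}$, one checks that $c'$ is itself quasi {\L}ukasiewicz with $k-1$ maximal intervals. The inductive hypothesis then provides $u^{c'}=\sum_L\gamma_L u_L$ in $\kly_+$, where each $L\subseteq\bP$ is an interval of length $\sigma_{k-1}$, while the base case applied to $c^{(k)}$ yields $u^{c^{(k)}}=\sum_s\delta_s u_{J_s}$ with $J_s=[a_k-s,a_k-s+r_k-1]\subseteq\bP$.

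The crux is then to control each product $u_L u_{J_s}$. Since $L\subseteq\bP$ has left endpoint at least $1$ and length $\sigma_{k-1}$, its right endpoint is at least $\sigma_{k-1}\geq a_k-1$ by quasi {\L}ukasiewicz; and the left endpoint of $J_s$ is $a_k-s\leq a_k$. Consequently $L$ and $J_s$ are always either adjacent or overlap, so the union $L\cup J_s$ is a single interval in $\bP$. When they are adjacent, $u_L u_{J_s}=u_{L\cup J_s}$ is already interval-indexed; when they overlap, $u_L u_{J_s}=u^{c''}$ for some $\bN$-vector $c''$ whose support is the single interval $L\cup J_s$, and the base case applied to $c''$ delivers an interval-indexed expansion in $\kly_+$. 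Summing over $L$ and $s$ yields WIP for $u^c$. The hard part is precisely this overlap analysis: its resolution relies on the base case being available for arbitrary single-interval supports (not just quasi {\L}ukasiewicz ones), and on translating the quasi {\L}ukasiewicz inequality into the geometric no-gap condition that makes the induction close.
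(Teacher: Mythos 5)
Your treatment of (2) and (3) matches the paper exactly: (2) by linearity and Theorem~\ref{thm:rho_fundamental}, and (3) by Theorem~\ref{th:main_4} plus the observation that reduced words of quasi-{\L}ukasiewicz permutations have quasi-{\L}ukasiewicz content, reducing to (1). Where you diverge is in the proof of (1). The paper argues probabilistically via Proposition~\ref{prop:probabilities as coefficients}: if the stable configuration $I\subseteq\bZ_+$ has a gap at $j$, then the quasi-{\L}ukasiewicz inequality forces at least $j$ particles to start in $[1,j-1]$, and since $j$ can never be occupied they must all end in $I\cap[1,j-1]$, which has at most $j-1$ slots — so some escape to nonpositive sites, whence $\prob_c(I)=0$. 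You instead give an algebraic induction on the number of maximal intervals in $\supp(c)$, peeling off $I_k$, expanding $u^{c'}$ and $u^{c^{(k)}}$ separately, and using quasi-{\L}ukasiewicz to show each product $u_L u_{J_s}$ has interval support so the base case applies. Both uses of the hypothesis are genuinely the same inequality $\sigma_{k-1}\geq a_k-1$; the probabilistic proof is shorter and avoids the product bookkeeping, while yours is self-contained and avoids any appeal to $q>0$.

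There is one gap in your argument for (1). To conclude that $L\cup J_s$ is a single interval you must verify two inequalities: with $L=[l_1,l_2]$ and $J_s=[j_1,j_2]$, you need both $j_1\le l_2+1$ and $l_1\le j_2+1$. You establish the first (via $j_1\le a_k\le l_2+1$) but say nothing about the second, and your stated hypotheses on $L$ (contained in $\bP$, of length $\sigma_{k-1}$, with right endpoint $\ge a_k-1$) do not by themselves rule out $l_1 > j_2+1$. The missing fact is that every interval $L$ occurring with nonzero coefficient in the expansion of $u^{c'}$ must contain $\supp(c')$ — this follows from Theorem~\ref{th:expansion_f} (the coefficient involves $(\alpz^{c'})^L$, which vanishes if $\supp(c')\not\subseteq L$), or probabilistically (a site with a particle initially is never vacated). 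Once you record $L\supseteq\supp(c')$, you get $l_1\le a_1 < b_k\le j_2$ and the union is indeed an interval. You should either carry "$L\supseteq\supp(c')$" as part of the induction hypothesis or cite Theorem~\ref{th:expansion_f}; as written, the no-gap claim is not justified.
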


\begin{proof}
For \eqref{it:connected_monomials}, one possibility is to use the recurrence~\eqref{eq:recurrence_uc}. Let us give a proof using the probabilistic interpretation from Proposition~\ref{prop:probabilities as coefficients} instead. Let $q>0$. Then one must show that when $I\subset \bZ_+$ is not an interval, the probability $\prob_c(I)$ is zero; equivalently, let us prove that if $I$ is not an interval and $\prob_c(I)>0$, then $I$ contains nonpositive integers. In this case, there exists $j>0$ with $j\notin I$ such that $I$ has both elements smaller and larger than $j$. Because $\prob_c(I)>0$, which means that it is possible for $c$ to stabilize at $I$, one has $c_j=0$ and $|I|=|c|$. Now by \eqref{eq:quasiLuk}, $\sum_{i=1}^{j-1}c_i=\sum_{i=1}^{j}c_i\geq j$. 
So there are at least $j$ particles in the sites $1$ to $j-1$, and by the hypothesis it is possible for all of them to stabilize strictly left of $j$. 
Therefore some must end up at nonpositive sites, which is what we wanted to prove.

For \eqref{it:connected_quasisymmetric}, we appeal to Theorem~ \ref{thm:rho_fundamental} from which it follows immediately. As to \eqref{it:connected_schub}, it follows from the identity~\eqref{eq:qKM}: if $w$ is quasi {\L}ukasiewicz, then each summand on the right-hand side is WIP, as implied by \eqref{it:connected_monomials}, and therefore so is the entire sum.
\end{proof}

\begin{proposition}
\label{prop:weakly_interval_identities}
Consider a homogeneous degree $d$ polynomial $f\in\mathbf{k}[\alpz_+]$ that is WIP, so that
\[\proj_+(f)=\sum_{a\geq 1}f_a u_{[a,a+d-1]}.\]
for some $f_a$ in $\mathbf{k}$. Then we have
\begin{align*}f_a=\ds{f(\underbrace{0,\ldots,0}_{a-1},x_{1},x_{1}+x_{2},\ldots,x_{1}+\cdots+x_{d},0,\ldots)}_{d+1}^q
\end{align*}
 and
\begin{align*}
f(\qint{1},\qint{2},\ldots)=\sum_{a\geq 1}f_a\qbin{a+d-1}{d}.
\end{align*}
Moreover, if $f=\morph_+(g)$ for $g\in \mathbf{k}[\alpx_+]$, then
\begin{align*}
f_a=\ds{g(\underbrace{0,\ldots,0}_{a-1},x_{1},x_{2},\ldots,x_{d+1},0,\ldots)}_{d+1}^q
\end{align*}
and
\begin{align*}
g(1,q,q^2,\ldots)
=\sum_{a\geq 1} f_a\qbin{a+d-1}{d}.
\end{align*}
\end{proposition}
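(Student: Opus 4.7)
The plan is to establish (1) as the main identity via Theorem~\ref{th:expansion_f}, deduce (2) by applying the specialization $\spec$ to the expansion, derive (3) from (1) by invoking the vanishing of qDS on degree $d$ elements of $\qsym_{d+1}^+$, and finally obtain (4) from (3) by specialization again.

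For (1), I would first observe that for any interval $I\subset\bP$, the coefficient of $u_I$ in the $\mcbk$-expansion of $\proj(f)\in\kly$ coincides with its coefficient in the $\mcbkp$-expansion of $\proj_+(f)\in\kly_+$, via the natural quotient map $\kly\to\kly_+$ sending $u_J\mapsto u_J$ if $J\subset\bP$ and to zero otherwise. This permits appealing to Theorem~\ref{th:expansion_f} with $I=[a,a+d-1]$: since $I$ is a single maximal interval, $Q_I\equiv 1$ and $b_k-a_1+1=d$. The substitution $f^I(\alpx)$ sends $z_i\mapsto x_a+\cdots+x_i$ for $i\in I$ and $z_i\mapsto 0$ elsewhere (consistent with $f\in\mathbf{k}[\alpz_+]$ not involving $z_i$ for $i\leq 0$). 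Relabeling $x_{a-1+j}\leftrightarrow x_j$ converts $\ds{\cdot}^q_{\{a,\ldots,a+d\}}$ into the standard $\ds{\cdot}^q_{d+1}$, yielding the identity.

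For (2) and (4), I would apply the algebra morphism $\spec:\kly_+\to\mathbf{k}$ from Proposition~\ref{prop:specialization} to both sides of $\proj_+(f)=\sum_{a\geq 1}f_a u_{[a,a+d-1]}$, where the WIP hypothesis guarantees that the expansion is indeed supported on intervals $[a,a+d-1]$. Using $\spec(u_i)=\qint{i}$ together with~\eqref{eq:sp_uI}, one gets $\spec(u_{[a,a+d-1]})=\qint{a}\cdots\qint{a+d-1}=\qfact{d}\,\qbin{a+d-1}{d}$. For (4), the left-hand side becomes $g(\qint{1},\qint{2}-\qint{1},\ldots)=g(1,q,q^2,\ldots)$, because $\qint{i}-\qint{i-1}=q^{i-1}$.

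For (3), substituting $f=\morph_+(g)$ in (1) replaces the qDS argument by $g(\underbrace{0,\ldots,0}_{a-1},x_1,x_2,\ldots,x_d,-(x_1+\cdots+x_d),0,\ldots)$, since $\morph_+$ sends $x_i\mapsto z_i-z_{i-1}$ and the prescribed $z$-substitutions have exactly these consecutive differences. Writing $Q=g(\underbrace{0,\ldots,0}_{a-1},x_1,\ldots,x_{d+1},0,\ldots)$ and $P=Q|_{x_{d+1}=-(x_1+\cdots+x_d)}$, their difference $Q-P$ vanishes when $x_1+\cdots+x_{d+1}=0$ and therefore factors as $(x_1+\cdots+x_{d+1})\,h$ for some polynomial $h$ of degree at most $d-1$. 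Since $x_1+\cdots+x_{d+1}$ is symmetric (hence quasisymmetric), $(x_1+\cdots+x_{d+1})\,h$ lies in $\qsym_{d+1}^+$ and has degree $d=(d+1)-1$, so its qDS vanishes by Corollary~\ref{cor:rho vanishes on qsym_+}. Thus $\ds{Q}^q_{d+1}=\ds{P}^q_{d+1}$, which is exactly what is needed to match (3) with the substituted form of (1). The main obstacle is this last reduction, but it fits cleanly into the quasisymmetric framework already developed.
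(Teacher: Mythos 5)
Your approach is essentially the paper's: (1) via Theorem~\ref{th:expansion_f} applied to a single interval $I=[a,a+d-1]$ (so $Q_I\equiv1$), (2) and (4) by applying $\spec$ and using~\eqref{eq:sp_uI}, and (3) by replacing the final argument $-(x_1+\cdots+x_d)$ with $x_{d+1}$, noting the difference lies in $\qsym_{d+1}^+$ of degree $d$ and hence is killed by qDS (Corollary~\ref{cor:rho vanishes on qsym_+}). Your spelling out of (3) via the factorization $Q-P=(x_1+\cdots+x_{d+1})h$ is a bit more explicit than the paper's one-line remark, but it is the same idea; and your preliminary observation that the $\mcbkp$-coefficients of $\proj_+(f)$ for intervals $I\subset\bP$ agree with the $\mcbk$-coefficients of $\proj(f)$ is exactly what "restricting Theorem~\ref{th:expansion_f} to $\kly_+$" means.

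One thing you should not gloss over: Theorem~\ref{th:expansion_f} with $b_k-a_1+1=d$ returns $u_I^*(f)=\tfrac{1}{\qfact{d}}\ds{f^I(\alpx)}^q_{\{a,\ldots,a+d\}}$, while $\spec(u_{[a,a+d-1]})=\qfact{d}\,\qbin{a+d-1}{d}$ as you correctly compute. These two $\qfact{d}$'s are supposed to cancel; as you have written it, you invoke Theorem~\ref{th:expansion_f} for (1), announce "yielding the identity," but never account for the prefactor, and then for (2) you again do not reconcile the extra $\qfact{d}$. Indeed the paper's own proof of (3) displays the expansion with $\tfrac{u_{[a,a+d-1]}}{\qfact{d}}$, which shows the first display of the proposition and identity (1) cannot both hold with the same normalization of $f_a$. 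If you define $f_a$ as the literal $\mcbkp$-coefficient, (1) should read $f_a=\tfrac{1}{\qfact{d}}\ds{\cdots}^q_{d+1}$ and (2) should carry $\qfact{d}$; if you define $f_a$ by the right-hand side of (1), the first display should read $\proj_+(f)=\sum_a\tfrac{f_a}{\qfact{d}}u_{[a,a+d-1]}$. Either convention works, but you need to choose one and carry it through consistently, rather than silently dropping the $\qfact{d}$.
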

\begin{proof}
The first equality follows from restricting Theorem~\ref{th:expansion_f} to $\kly_+$.
 Indeed we simply have to compute $u_{I}^*(f)$ for $I=[a,a+d-1]$. The details are very similar to those in the discussion immediately after Theorem~\ref{th:expansion_f}, and we omit them.
 The second equality then follows from applying the specialization $\spec$ (cf. \S\S\ref{sub:specialization}) sending $u_i \to\qint{i}$.

For the second expression, it follows
\begin{align*}
g(u_1,u_2-u_1,\ldots,)&=\sum_{a=1}^\infty \ds{g(\underbrace{0,\ldots,0}_{a-1},x_{1},x_{2},\ldots,x_d,-(x_1+\cdots +x_d),0,0,\ldots}_{d+1}\frac{u_{[a,a+d-1]}}{\qfact{d}},
\end{align*}

Here we can replace $-(x_1+\cdots +x_d)$ by $x_{d+1}$, since $x_1+\ldots +x_{d+1}$ is symmetric in $x_1,\ldots,x_{d+1}$.
The second equality follows by the same specialization $\spec$.
\end{proof}

Proposition~\ref{prop:weakly_interval_identities} can thus be applied to any of three examples of Theorem~\ref{th:important interval families}. Note however that the identities that we have obtained for each of these families turned out to be summation formulas over specializations of \emph{multiple} `compatible' members of these families, as mentioned at the beginning of this section: cf. \eqref{eq:u^c principal}, \eqref{eq:fundamental ps}, \eqref{eq:summation_indecomposable}. We now proceed to give a common setting to explain such formulas.

Let $f$ be a formal power series in $(x_i)_{i\in \bZ}$, homogeneous of degree $d\geq 1$, and such that the support of $f$ is bounded above, i.e. there exists an $M$ such that the variables $x_i$ for $i>M$ do not appear in $f$.
Let $\psi_+(f)$ be the element in $\mathbf{k}[\alpx_+]$ obtained by setting $x_i=0$ for all $i\leq 0$.\footnote{This map is denoted $\pi_+$ in \cite{Lam18}, which unfortunately conflicts with our meaning of $\proj_+$.}
We continue to denote the shift map sending $x_i\to x_{i+1}$ by $\tau$. For $i\geq 0$, consider the sequence of polynomials in $\alpx_+$:
\begin{align}
  f^{(i)}&\coloneqq\psi_+(\tau^i(f))=f(\dots,0,x_1,x_2,\dots,x_i;x_{i+1},x_{i+2},\dots),
\end{align}
where the semicolon separates positive and nonpositive indices in $f=f((x_i)_{i\in \bZ})$. Note that the $f^{(i)}$ have automatically the following compatibility property for any $i\geq 0$:
\begin{equation}
\label{eq:compatibility_fi}
f^{(i)}(x_1,x_2,\dots)=f^{(i+1)}(0,x_1,x_2,\dots).
\end{equation} 
We now consider three properties such that such a series can possess:
\begin{enumerate}
  \item \label{item-init} $f^{(0)}\neq 0$ and $x_1$ divides $f^{(0)}$.
  \item \label{item-wip}$\morph_+(f^{(i)})$ is WIP for all $i\geq 0$.
  \item \label{item-backstable}
     $f^{(i)}(x_1,\dots,x_{d+1},0,\dots)\in \qsym^+_{d+1}$  for all $i\geq d+1$.
\end{enumerate}

Note these are really properties of the polynomials $f^{(i)}$. In fact we could have started directly with such a family of polynomials satisfying the compatibility relations \eqref{eq:compatibility_fi}. The series $f$ serves as a nice device to encode such a family, inspired by the back stable Schubert polynomials $\overleftarrow{\mathfrak{S}}_w$ ~\cite{Lam18}, see below. 

Assume $f$ satisfies all three properties. Condition~\eqref{item-wip} says that $\morph_+(f^{(i)})$ is WIP for all $i\geq 0$ and so by Proposition~\ref{prop:weakly_interval_identities} we get
\begin{align}
\label{eq:deets_1}
f^{(i)}(1,q,q^2,\dots)&=\sum_{a\geq 1}\ds{f^{(i)}(\underbrace{0,\ldots,0}_{a-1},x_{1},x_{2},\ldots,x_{d+1},0,\ldots)}_{d+1}^q\qbinom{a+d-1}{d}.
\end{align}
Condition~\eqref{item-init} immediately implies $f^{(0)}(0,x_1,x_2,\dots)=0$, and more generally
\begin{align}
  f^{(i)}(\underbrace{0,\ldots,0}_{i+1},x_{1},x_{2},\ldots)=0.
  \end{align}
  Thus~\eqref{eq:deets_1} can be rewritten as
  \begin{align}
f^{(i)}(1,q,q^2,\dots)&=\sum_{1\leq a\leq i+1}\ds{f^{(i-a+1)}(x_{1},x_{2},\ldots,x_{d+1},0,\ldots)}_{d+1}^q\qbinom{a+d-1}{d}.
\end{align}
By condition~\eqref{item-backstable} we have $\ds{f^{(i-a+1)}(x_{1},x_{2},\ldots,x_{d+1},0,\ldots)}_{d+1}^q=0$ for $i-a+1\geq d+1$ as the argument lands inside $\qsym_{d+1}^+$, using Corollary~\ref{cor:rho vanishes on qsym_+}. 
Reindexing by setting $j=i-a+1$ we get
\begin{align}
\label{eq:deets_2}
  f^{(i)}(1,q,q^2,\dots)=\sum_{j=0}^{\min(i,d)}\ds{f^{(j)}}_{d+1}^q\qbinom{i+d-j}{d}.
\end{align}
If we now multiply both sides of by $t^{i}$ and sum over all $i\geq 0$, by appealing to Cauchy's $q$-binomial theorem \eqref{eq:cauchy_2} we obtain:

\begin{proposition}
  \label{prop:compatible_specializations}
  Suppose $f$ is a power series in $(x_i)_{i\in \bZ}$ with support bounded above and homogeneous of degree $d\geq 1$. Assume further that $f$ meets conditions~\eqref{item-init} through \eqref{item-backstable}. Then we have
  \begin{align*}
    \sum_{i\geq 0}f(1,q,q^2,\dots)t^{i}=\frac{\displaystyle\sum_{0\leq j\leq d}\ds{f^{(j)}}_{d+1}^qt^j}{(t;q)_{d+1}}.
  \end{align*}
\end{proposition}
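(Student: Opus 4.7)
The plan is to observe that the three numbered paragraphs immediately preceding the statement already carry out nearly the entire argument; the proposition is just the generating-function repackaging of equation \eqref{eq:deets_2}. So my first task would be to carefully verify that derivation: condition \eqref{item-wip} lets me apply Proposition~\ref{prop:weakly_interval_identities} to each $\morph_+(f^{(i)})$ to get
\[
f^{(i)}(1,q,q^2,\dots)=\sum_{a\geq 1}\ds{f^{(i)}(\underbrace{0,\dots,0}_{a-1},x_1,\dots,x_{d+1},0,\dots)}_{d+1}^q\qbinom{a+d-1}{d};
\]
condition \eqref{item-init}, together with the compatibility relation~\eqref{eq:compatibility_fi}, gives
$f^{(i)}(0,\dots,0,x_1,x_2,\dots)=0$ whenever there are at least $i+1$ leading zeros, letting me reindex by $j=i-a+1$; and condition \eqref{item-backstable} together with Corollary~\ref{cor:rho vanishes on qsym_+} kills the summands with $j\geq d+1$, truncating the sum at $j=d$.

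Once \eqref{eq:deets_2} is in place, the final step is routine: multiply both sides by $t^i$, sum over $i\geq 0$, and swap the order of summation to obtain
\[
\sum_{i\geq 0}f^{(i)}(1,q,q^2,\dots)\,t^{i}=\sum_{j=0}^{d}\ds{f^{(j)}}_{d+1}^q\sum_{i\geq j}\qbinom{i+d-j}{d}t^i.
\]
In the inner sum, substitute $k=i-j$ and apply Cauchy's $q$-binomial identity~\eqref{eq:cauchy_2} to get $\sum_{k\geq 0}\qbinom{k+d}{d}t^{k+j}=t^j/(t;q)_{d+1}$. Factoring $(t;q)_{d+1}^{-1}$ out of the $j$-sum delivers the claimed formula (with the understanding that the left-hand side of the displayed identity in the statement is $\sum_{i\geq 0}f^{(i)}(1,q,q^2,\dots)t^i$, this being the only sensible reading given that $f$ itself is not a polynomial in finitely many variables).

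There is no real obstacle in this proof; the work has all been done upstream. The only thing to be careful about is the bookkeeping in the reindexing $a\mapsto i-a+1$, in particular checking that the upper limit $\min(i,d)$ in \eqref{eq:deets_2} is correct: the contributions from $j>i$ are absent because of condition \eqref{item-init} (the argument has too many leading zeros), while those from $j>d$ vanish by condition \eqref{item-backstable}. Once the reindexing is confirmed, extending the sum to all $j\leq d$ (rather than $\min(i,d)$) produces no change because $\qbinom{i+d-j}{d}$ has the correct form, and Cauchy's identity applies uniformly.
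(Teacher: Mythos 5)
Your proof is correct and follows exactly the paper's argument: the paper's "proof" of this proposition consists of the three numbered paragraphs leading up to \eqref{eq:deets_2}, followed by the generating-function step you carry out, and your use of Proposition~\ref{prop:weakly_interval_identities}, Corollary~\ref{cor:rho vanishes on qsym_+}, and Cauchy's identity~\eqref{eq:cauchy_2} matches the paper's. You are also right that the left-hand side of the stated identity should read $\sum_{i\geq 0}f^{(i)}(1,q,q^2,\dots)t^i$; the omission of the superscript $(i)$ in the proposition as printed is a typo.
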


In light of Proposition~\ref{prop:compatible_specializations} we revisit three important families considered earlier.
\subsubsection*{The connected $y_c$}
Given a strong composition $c=(c_1,\dots,c_k)\vDash d$, consider
\begin{align}
\overleftarrow{y}_c=\prod_{1\leq i\leq k}(\cdots+x_{-1}+x_0+x_1+\cdots+x_i)^{c_i}.
\end{align}
Since $\overleftarrow{y}_c^{(0)}=x_1^{c_1}(x_1+x_2)^{c_1}\cdots(x_1+\cdots+x_k)^{c_k}=y_c$,  condition~\eqref{item-init} is met since $c_1>0$.
Condition~\eqref{item-backstable} follows because for $i\geq d+1$, we have $\overleftarrow{y}_c^{(i)}(x_1,\dots,x_{d+1},0,\dots)=(x_1+\cdots+x_{d+1})^d$ which is symmetric in $x_1,\dots,x_{d+1}$. Finally, condition~\eqref{item-wip} follows from Corollary~\ref{cor:interval monomials are SIP}.

Proposition~\ref{prop:connected_men_gf} now follows immediately from Proposition~\ref{prop:compatible_specializations}.

\subsubsection*{Fundamental quasisymmetric functions}
Let $\tilde{f}$ be a quasisymmetric function in the variables $(x_i)_{i\leq 0}$. Define $f=\tau^{j}(\tilde{f})$ where $j$ is the smallest positive integer so that the resulting $f$ meets condition~\eqref{item-init}.\footnote{It is not hard to check that such a $j$ always exists. Indeed say $\tilde{f}=\sum_{\alpha\vDash d}c_{\alpha}F_{\alpha}(\alpx_{-})$, then $j=\ell(\alpha)$ for $\alpha$ a composition of shortest length such that $c_{\alpha}>0$.} Thus $f$ is a quasisymmetric function in $(x_i)_{i\leq j}$ satisfying Condition~\eqref{item-init}.  Condition~\eqref{item-wip} follows from Theorem~\ref{th:important interval families}.
Finally condition~\eqref{item-backstable} is clear since the $f^{(i)}(x_1,\dots,x_{d+1},0,\dots)$ belong to $\qsym_{d+1}$.
We may now infer Equation~\eqref{eq:fundamental ps} as an consequence of Proposition~\ref{prop:compatible_specializations}.

As an explicit instance, say $\tilde{f}=F_{\alpha}(\alpx_{-})$ for $\alpha=(\alpha_1,\dots,\alpha_k)\vDash d$.
Then $f=\tau^{k}(F_{\alpha}(\alpx_{-}))$.
In particular $f^{(0)}=x_1^{\alpha_1}\cdots x_k^{\alpha_k}$, and more generally $f^{(i)}=F_{\alpha}(x_1,\dots,x_{k+i})$.
The reader can now check that the equality in Proposition~\ref{prop:compatible_specializations} is that in equation~\eqref{eq:fundamental ps} after both sides are divided by $t^{k}$.

\subsubsection*{$\schub{w}$ for $w$ quasiindecomposable}
We now discuss Theorem~\ref{thm:quasiindecomposable}.
To this end, we rely on beautiful recent work of Lam-Lee-Shimozono \cite{Lam18}. To keep the exposition brief, we borrow terminology from loc. cit. without definition.
Assume $w\in \sgrp_{p+1}$ is indecomposable of length $d$. We interpret $w$ as a permutation in $\sgrp_{\bZ}$ by declaring $w(i)=i$ for all $i\in \bZ\setminus[p+1]$.
In \cite[Theorem 3.2]{Lam18}, Lam et al introduce the \emph{back stable Schubert polynomial} $\overleftarrow{\mathfrak{S}}_w$, which is a formal power series analogue of the ordinary Schubert polynomial $\schub{w}$. One of its properties is that $\psi_+(\overleftarrow{\mathfrak{S}}_w)=\schub{w}$.

We take $f=\overleftarrow{\mathfrak{S}}_w$.
By \cite[Proposition 3.7]{Lam18} and after taking images under $\psi_+$, we get
\begin{align}
  f^{(i)}=\schub{1^i\times w}.
\end{align}
We now verify conditions~\eqref{item-init} through \eqref{item-backstable} under the assumption that $w$ is indecomposable.

We have $f^{(0)}=\schub{w}\neq 0$; $w$ is indecomposable, so in particular $w(1)>1$, which in turn implies the $x_1$ divides $f^{(0)}$; say by the Billey-Jockusch-Stanley formula \cite{Bil93}. This verifies condition~\eqref{item-init}. Condition~\eqref{item-wip} follows from Theorem~\ref{th:important interval families}. As to~\eqref{item-backstable}, $\schub{1^i\times w}$ is well known to be symmetric in the variables $x_1,\dots,x_i$ \cite{Macdonald}. Thus for $i\geq d+1$, $\schub{1^i\times w}(x_1,\dots,x_{d+1},0,\dots)$ is a symmetric polynomial in its variables, and thus naturally lands in $\qsym_{d+1}^+$.  
We have thus verified all criteria needed for Proposition~\ref{prop:compatible_specializations}, and  Theorem~\ref{thm:quasiindecomposable} follows.\medskip

It would be interesting to find more families and identities that fit within this framework.

\section{A $q$-analogue of the volume polynomial}
\label{sec:volumes}
We now return to what was Postnikov's original motivation to discuss divided symmetrization and discuss how the parameter $q$ fits into this context.
Given $\lambda=(\lambda_1,\dots,\lambda_n)\in \bR^n$, the \emph{permutahedron} $P_{\lambda}$ is defined to be the convex hull of all points of the form $w\cdot \lambda$ as $w$ ranges over  $\sgrp_n$.
Postnikov \cite[Section 3]{Pos09} establishes that if $\lambda_1\geq \cdots\geq \lambda_n$, the volume $V(\lambda_1,\dots,\lambda_n)$ of $P_{\lambda}$ is given by $\frac{1}{(n-1)!}\ds{(\lambda_1x_1+\cdots+\lambda_nx_n)^{n-1}}_{n}^{q=1}$.
It follows in particular that $V(\lambda)$ is a polynomial in the $\lambda_i$'s.

In light of this, it is reasonable to define the \emph{$q$-volume} of $P_{\lambda}$ as
\begin{align}
  \label{eq:definition q volume}
\mathrm{V}^q(\lambda_1,\dots,\lambda_n)\coloneqq \frac{1}{(n-1)!}\ds{(\lambda_1x_1+\dots+\lambda_nx_n)^{n-1}}_n^q.
\end{align}
We refer to $\mathrm{V}^q(\lambda)$ as the \emph{$q$-volume polynomial} of $P_{\lambda}$ given that it recovers Postnikov's volume polynomial when $q=1$.
The choice of nomenclature is justified further by the expansions that we consider next, which lift Postnikov's results nicely and also serve to unify some of the qDS evaluations from this work. 

By appealing to Proposition~\ref{prop:qDS for monomials}, we can expand  in monomials of the form $\lambda_1^{c_1}\cdots\lambda_n^{c_n}$ as follows:
\begin{align}
\mathrm{V}^q(\lambda_1,\dots,\lambda_n)=\sum_{\substack{c=(c_1,\dots,c_n)\\ |c|=n-1}}\ds{\alpx^c}_n^q \frac{\lambda_1^{c_1}\cdots\lambda_n^{c_n}}{c_1!\cdots c_n!}.
\end{align}
Thus we obtain a generalization to \cite[Theorem 3.2]{Pos09}.

We can also generalize Postnikov's expression \cite[Section 16]{Pos09} for the volume polynomial in terms of mixed Eulerian numbers in a straightforward manner. Set $\mu_i=\lambda_{i}-\lambda_{i+1}$ for $1\leq i\leq n-1$.
In view of Theorem~\ref{th:qDS_and_K}, we may then express $\mathrm{V}^q(\lambda)$  alternatively as:
\begin{align}
\label{eq:vol_q_1}
\mathrm{V}^q(\lambda_1,\dots,\lambda_n)=\widehat{\mathrm{V}}^q(\mu_1,\dots,\mu_{n-1})\coloneqq \frac{\qfact{n-1}}{(n-1)!}\times \Top_n((\mu_1u_1+\dots+\mu_{n-1}u_{n-1})^{n-1}).
\end{align}
Here the coefficient extraction on the right-hand side takes place in $\kly_n$ with the ground field $\mathbf{k}$ containing $\bQ(q)[\mu_1,\ldots,\mu_{n-1}]$.
Once again we may expand $(\mu_1u_1+\dots+\mu_{n-1}u_{n-1})^{n-1}$ in terms of monomials of the form $\mu_1^{c_1}\cdots \mu_{n-1}^{c_{n-1}}$.
From equation~\eqref{eq:vol_q_1} it then follows that
\begin{align}
\label{eq:vol_q_2}
\mathrm{V}^q(\lambda_1,\dots,\lambda_n)&=
 \sum_{\substack{c=(c_1,\dots,c_{n-1})\\ |c|=n-1}}A_{c}(q)\;\frac{\mu_1^{c_1}\cdots \mu_{n-1}^{c_{n-1}}}{c_1!\cdots c_{n-1}!}.
\end{align}

We now proceed to describe how  qDS of Schubert polynomials appears in the context of $q$-volumes. We note that the analogous expression when $q=1$ is not present in \cite{Pos09}.
We will need the notion of (type $A$) degree polynomials introduced in \cite{PS09} and which we refer to for their combinatorial definition in terms of chains in the Bruhat order. Given permutations $v$ and $w$ in $\sgrp_n$, the degree polynomial $\mc{D}_{v,w} (y_1,\dots,y_n)\in \bQ[y_1,\dots,y_n]$ can be defined 
 by the fact that its evaluation at $(\Lambda_1>\dots>\Lambda_n)\in\bZ^n$ is the degree of the Schubert variety $X^w$ in its projective embedding given by the line bundle indexed by $\Lambda$; see \cite[Section 4]{PS09}.

By using \cite[Equation 4.2]{PS09} (essentially due to Chevalley) we have the following equality\footnote{An equality of this sort holds for any pair of dual bases where the duality pairing is as described in \cite[Section 5]{PS09}. Indeed, degree polynomials are duals to Schubert polynomials under that pairing.}
\begin{align}
\label{eq:degree times Schub}
\frac{(\lambda_1x_1+\dots+\lambda_nx_n)^{n-1}}{(n-1)!}=\sum_{w\in\sgrpp_n}\schub{w}(x_1,\dots,x_n)\mc{D}_w(\lambda_1,\dots,\lambda_n) \mod \sym_n^+.
\end{align}
Since qDS vanishes on degree $n-1$ homogeneous polynomials belonging to $
\sym_n^+$, we have
\begin{align}
\mathrm{V}^q(\lambda_1,\dots,\lambda_n)=\sum_{w\in\sgrpp_n}a_w(q)\mc{D}_w(\lambda_1,\dots,\lambda_n).
\end{align}
Thus the $a_w(q)$ arise as coefficients when one expresses the $q$-volume in terms of degree polynomials.

Our results in this section have been largely obtained by algebraic manipulations, and it makes sense to inquire about an interpretation for $q$-volumes that is of a geometric flavor. We give one in \cite{NT21_remixed}, interpreting the exponents of $q$ as statistics on an interesting polyhedral decomposition of the permutahedron.

 \appendix
\section{Proof of Proposition~\ref{prop:qDS for monomials}}
\label{sub:proof_qDS_monomials}
Given $c\in\wc{n}{n-1}$, we need to show
\begin{equation}
\label{eq:qDS for monomials bis}
\ds{x_1^{c_1}\dots x_n^{c_n}}_n^q=(-1)^{|S|} \sum_{\sigma\in X_S}q^{\binom{n-1}{2}-\mathrm{inv}(\sigma)-\sht{c}}
\end{equation}
where $h_i(c)=(\sum_{k\leq i}c_k)-i$ for $i=1,\dots,n$, $\sht{c}=\sum_{i=1}^{n-1} h_i(c)$, $S$ is the set of $i\in\{1,\dots,n-1\}$ such that $h_i<0$, and finally $X_S$ is the set of permutations of $\sgrp_n$ with descent set $S$.\medskip

 Assume first that there exists an index $k\in [n-1]$ such that $h_k<-1$, and pick $k$ to be the largest such index. Since $h_n({c})=-1$, this implies that $h_{k+1}\geq h_k+1$, that is $c_{k+1}\geq 2$.
Consider the move sending ${c}$ to ${d}\coloneqq (c_1,\dots,c_k+1,c_{k+1}-1,\dots, c_n) \in\wc{n}{n-1}$. We have $h_i(d)=h_i$ for $i\neq k$ and $h_k(d)=h_k+1$. Thus we  have $S_{c}=S_{d}$ and $\sht{d}=\sht{c}+1$.

We show that $\ds{\alpx^{{c}}}_{n}=q\ds{\alpx^{{\bf d}}}_{n}$. Indeed
\begin{align}
  \label{eq:equality_under_moves}
  \ds{q\alpx^{{\bf d}}-\alpx^{{c}}}_{n}=\ds{x_1^{c_1}\cdots x_{k}^{c_k}(qx_k-x_{k+1})x_{k+1}^{c_{k+1}-1}\cdots x_n^{c_n}}_{n}.
\end{align}
By our hypothesis that $h_k<-1$ , we know that $\deg(x_1^{c_1}\cdots x_{k}^{c_k})< k-1$. Lemma~\ref{lem:petrov_recursion} implies that the right hand side of \eqref{eq:equality_under_moves} equals $0$, which in turn implies that
$\ds{\alpx^{{c}}}_{n}=q\ds{\alpx^{{\bf d}}}_{n}$.\smallskip

Now assume that there exists an index $k\in [n-1]$ such that $h_k({c'})>0$, and pick $k$ to be the smallest such index. Since $h_0({c'})=0$, this implies that $h_{k-1}(c')+1\leq h_k(c')$, that is $c'_{k}\geq 2$.
Consider the move sending ${c'}$ to a  sequence ${d'}\coloneqq (c'_1,\dots,c'_{k}-1,c'_{k+1}+1,\dots, c'_n)\in\wc{n}{n-1}$. We have $h_i(d)=h_i(c')$ for $i\neq k$ and $h_{k}(d')=h_k(c')-1\geq 0$. Thus we have $S_{c'}=S_{d'}$ and $\sht{d'}=\sht{c'}-1$. Arguing like before allows us to conclude that$\ds{\alpx^{{c'}}}_{n}=q^{-1}\ds{\alpx^{{d'}}}_{n}$.
Repeating this transformation turns ${ c'}$ to a composition ${ c''}$ which satisfies $h_i({c''})\in \{0,-1\}$ for all $i\in [n]$, and we have
\begin{align}
	\ds{\alpx^{{c}}}_{n}=q^{b-a}\ds{\alpx^{{c''}}}_{n}
\end{align}

The upshot of the preceding discussion is that to compute $\ds{\alpx^{c}}_{n}$ it suffices to consider vectors ${c}$ such that  $h_{i}({c})\in \{0,-1\}$ for all $i\in [n]$. The map ${c}\mapsto S_{c}$ restricted to these sequences
is a 1--1 correspondence with subsets $S\subseteq [n-1]$. We write $S\mapsto c(S)$ for the inverse map, and set $\alpx(S)\coloneqq \alpx^{c(S)}$.

For $S\subseteq [n-1]$, consider the family of polynomials
\begin{align}
\beta_n(S)=q^{|S|+1-n}\sum_{\substack{\sigma\in \sgrp_n\\ \des(\sigma)=S}}q^{\binom{n}{2}-\mathrm{inv}(\sigma)}
\end{align}
Our goal now is to prove that
\begin{align}
  \label{eq:monomial_indexed_by_S}
  \ds{\alpx(S)}_{n}=(-1)^{|S|}\beta_n(S)
\end{align}
To this end, we proceed by induction on $n$.
When $n=1$, we have $S=\emptyset$. In this case, we have $\alpx(S)=1$, and both sides of the equality in \eqref{eq:monomial_indexed_by_S} equal $1$.
Let $n\geq 2$ henceforth.
 Assume further that $S\neq [n-1]$, and let 
 $i\notin S$. Define $S_i=S\cap [i]$ and $S^i=\{j\in [n-i]\suchthat j+i\in S\}$. Then Lemma~\ref{lem:petrov_recursion} gives
\begin{align}
\label{eq:recurrence xS}
q\ds{\alpx(S)}_{n}-\ds{\alpx(S\cup \{i\})}_{n}=\qbin{n}{i} \ds{\alpx(S_i)}_{i}
\ds{\alpx(S^i)}_{n-i}.
\end{align}
The numbers $(-1)^{|S|}\beta_n(S)$ also satisfy this identity, because:
\begin{align}
  q\beta_n(S)+\beta_n(S\cup\{i\})=\qbin{n}{i} \beta_i(S_i)
  \beta_{n-i}\left(S^i\right).
\end{align}
This has a simple combinatorial proof: given a permutation corresponding to the left hand side, split its $1$-line notation after position $i$, and standardize both halves so that they become permutations on $[1,i]$ and $[1,n-i]$ respectively. The factor of $\qbin{n}{i}$ comes from inversions that have their left endpoint in $[1,i]$ and the right end point in $[i+1,n]$. Counting these inversions is the same as counting inversions in binary words in $\{0,1\}$ with $i$ 0s and $n-i$ 1s. The factor $q$ comes from the factor $q^{|S|+1-n}$ ultimately.

To conclude, note that \eqref{eq:recurrence xS} determines all values $\ds{\alpx(S)}_{n}$ by induction in terms of the single value $\ds{\alpx([n-1])}_{n}$. Now $\alpx([n-1])=x_2\cdots x_n$, so by the following lemma $\ds{\alpx([n-1])}_{n}=(-1)^{n-1}$. This coincides with $(-1)^{n-1}\beta_n(S)$ so \eqref{eq:monomial_indexed_by_S} holds in this case also and the proof is complete.

\begin{lemma}For $1\leq i\leq n$, we have
\begin{equation}
\label{eq:evalXi}
 \text{If  } X_i\coloneqq \prod_{\substack{1\leq j\leq n\\ j\neq i}}x_j,\text{  then  }\ds{X_i}_n=(-1)^{n-i}q^{\binom{i-1}{2}}\qbin{n-1}{i-1}.
\end{equation}
\end{lemma}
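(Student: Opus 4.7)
The natural approach is to directly invoke Proposition~\ref{prop:qDS for monomials} applied to the monomial $X_i=\prod_{j\neq i}x_j$, whose exponent vector is the $\bN$-vector $c=(1^{i-1},0,1^{n-i})\in\wc{n}{n-1}$. A short computation gives $h_j(c)=0$ for $j<i$ and $h_j(c)=-1$ for $i\leq j\leq n-1$, so the associated descent set is $S=\{i,i+1,\ldots,n-1\}$ (of cardinality $n-i$) and the height statistic is $h(c)=-(n-i)$. This already accounts for the sign $(-1)^{n-i}$ in the claimed formula.

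The next step is to parameterize $X_S$, the set of permutations of $[n]$ with descent set $S$. These are precisely the $\sigma$ satisfying $\sigma_1<\sigma_2<\cdots<\sigma_i>\sigma_{i+1}>\cdots>\sigma_n$, and the condition $\sigma_i>\sigma_{i+1}$ forces $n\in T:=\{\sigma_1,\ldots,\sigma_i\}$. Thus such permutations are in bijection with subsets $T'\subseteq[n-1]$ of size $i-1$ via $T=T'\cup\{n\}$. A direct count of inversions yields
\[
\inv(\sigma)=\binom{n-i}{2}+\sum_{t\in T}t-\binom{i+1}{2}=\binom{n-i}{2}+\Bigl(\sum_{t'\in T'}t'\Bigr)+n-\binom{i+1}{2}.
\]

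Substituting these into the exponent $\binom{n-1}{2}-\inv(\sigma)-h(c)$ of \eqref{eq:qDS for monomials}, and letting $s:=\sum_{t'\in T'}t'-\binom{i}{2}\geq 0$, a small algebraic simplification (using the elementary identity $\binom{n-1}{2}-\binom{n-i}{2}=(i-1)(n-i)+\binom{i-1}{2}$) reduces the exponent to
\[
\binom{i-1}{2}+(i-1)(n-i)-s.
\]
Thus
\[
\ds{X_i}_n^q=(-1)^{n-i}q^{\binom{i-1}{2}}\sum_{\substack{T'\subseteq[n-1]\\|T'|=i-1}}q^{(i-1)(n-i)-s}.
\]

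The final step is to recognize the sum as $\qbin{n-1}{i-1}$. The involution $T'\mapsto T'^{*}:=\{n-t:t\in T'\}$ on $(i-1)$-subsets of $[n-1]$ sends $s$ to $(i-1)(n-i)-s$, so it suffices to invoke the standard $q$-binomial identity $\qbin{n-1}{i-1}=\sum_{T'}q^{s}$. None of these steps involves a real obstacle; the only place where one must be mildly careful is the bookkeeping of the various combinatorial quantities $\binom{n-i}{2}$, $\binom{i+1}{2}$, and $(i-1)(n-i)$ in reducing the exponent to a manifestly nonnegative integer.
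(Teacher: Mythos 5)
Your computation is careful and, taken in isolation, correct: the exponent vector for $X_i$ yields $S=\{i,\dots,n-1\}$ and $h(c)=-(n-i)$, the permutations in $X_S$ are parameterized by $(i-1)$-subsets $T'\subseteq[n-1]$, the inversion count and exponent simplifications all check out, and the final sum is indeed $\qbin{n-1}{i-1}$ after the reflection involution $T'\mapsto\{n-t:t\in T'\}$. However, the argument is \emph{circular} in the context of the paper: this lemma is stated and proved \emph{inside} the proof of Proposition~\ref{prop:qDS for monomials} (Appendix~A), precisely because it is needed to anchor the base case $\ds{\alpx([n-1])}_n = \ds{x_2\cdots x_n}_n = (-1)^{n-1}$ of the induction establishing that Proposition. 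Invoking Proposition~\ref{prop:qDS for monomials} to prove the lemma therefore presupposes what one is trying to establish.

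The paper's own argument avoids this by proving the lemma directly, without appealing to the Proposition: it inducts on $n$, applies Petrov's Lemma~\ref{lem:petrov_recursion} to the identity $qX_{i+1}-X_i = (x_1\cdots x_{i-1})(qx_i-x_{i+1})(x_{i+2}\cdots x_n)$ to obtain the first-order recurrence
\[
\ds{X_{i+1}}_n - \ds{X_i}_n = (-1)^{n-i-1}q^{\binom{i-1}{2}}\qbin{n}{i},
\]
and then pins down the overall additive constant by using that $\sum_{i=1}^n X_i$ is symmetric (hence $\sum_i\ds{X_i}_n=0$) together with Cauchy's $q$-binomial theorem (equation~\eqref{eq:cauchy_binomial_theorem}) showing $\sum_{i=1}^n(-1)^{n-i}q^{\binom{i-1}{2}}\qbin{n-1}{i-1}=0$. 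If you want a self-contained proof of the lemma, that is the route to take; your computation is better read as a (correct) consistency check that Proposition~\ref{prop:qDS for monomials}, once proved, reproduces the lemma.
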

\begin{proof}[Proof of the lemma]
We use induction on $n$. The claim is clearly true when $n=1$, where the empty product is to be interpreted as $1$.
Assume $n\geq 2$ henceforth. For $i=1,\ldots,n-1$, we have
\begin{align*}
\ds{qX_{i+1}-X_i}_{n}&=\ds{(x_1\ldots x_{i-1})(qx_i-x_{i+1})(x_{i+2}\cdots x_{n})}_{n}\\
&=\qbin{n}{i}\ds{x_1\ldots x_{i-1}}_{i}\ds{x_{2}\ldots x_{n-i}}_{n-i}.
\end{align*}
By the inductive hypothesis, we have $\ds{x_1\ldots x_{i-1}}_{i}=q^{\binom{i-1}{2}}$ and $\ds{x_{2}\ldots x_{n-i}}_{n-i}=(-1)^{n-i-1}$.
Therefore
\begin{align}\label{eq:n-1_equations}
  \ds{X_{i+1}}_{n}-\ds{X_i}_n=(-1)^{n-i-1}q^{\binom{i-1}{2}}\qbin{n}{i}.
\end{align}
By summation, this gives~\eqref{eq:evalXi} up to a common additive constant. The proof of the lemma is then complete using $\sum_{i=1}^{n}\ds{X_i}_n=\ds{\sum_{i=1}^{n}X_i}_{n}=0$ since $\sum_{i=1}^{n}X_i$ is symmetric in $x_1,\dots,x_n$, and
$\sum_{i=1}^{n}(-1)^{n-i}q^{\binom{i-1}{2}}\qbin{n-1}{i-1}=0$ by the $q$-binomial theorem in \eqref{eq:cauchy_binomial_theorem}.
\end{proof}
\section{Proof of Theorem~\ref{th:expansion_f}}
\label{app:proof_technical_theorem}

Notice that the expression on the RHS of \eqref{eq:expansion_f} is linear in $f$,  and thus it is enough to consider $f= \alpz^c$ for $c$ an $\bN$-vector with $|c|=d$.

Given $I$ with $|I|=d$, consider $p_c^I\coloneqq u_I^*(\alpz^c)$.
Decompose $I$ as $I_1\sqcup \cdots \sqcup I_k$ where $I_j=[a_j,b_j]$ where $b_{j}\leq a_{j+1}-2$.
It is easily seen that $p_c^I=0$ unless $\supp(c)\subset I$ and $|I_j|=\sum_{i\in I_j}c_i$ for all $j$.
Therefore, assume these conditions hold.

Let $c^{I_j}$ be the $\bN$-vector which agrees with $c$ on the indices determined by the interval $I_j$ and is $0$ everywhere else.
From the relations defining $\kly$ it follows that
\begin{align}\label{eq:kly_arbitrary_coefficient}
p_c^I=\prod_{1\leq j\leq k} p^{I_j}_{c^{I_j}}=\prod_{1\leq j\leq k} \frac{A_{c^{I_j}}}{\qfact{|I_j|}},
\end{align}
where, in arriving at the second equality, we have used (a shifted version of) Theorem~\ref{th:qDS_and_K} to conclude that the coefficient of $u_{I_j}$ in $u^{c^{I_j}}$ is given by $\frac{A_{c^{I_j}}}{\qfact{|I_j|}}$.

To establish the claim we need to show that
\begin{align}\label{eq:qds_arbitrary_coefficient}
p_c^I=\ds{Q_I(\alpx)\prod_{j=1}^kx_{a_j}^{c_{a_j}}(x_{a_j}+x_{a_j+1})^{c_{a_{j+1}}}\cdots (x_{a_j}+\cdots +x_{b_j})^{c_{b_{j}}}}_{\{a_1,\ldots,b_k+1\}}^q\times\frac{1}{\qfact{b_k-a_1+1}}.
\end{align}
It is immediate that the RHS vanishes if $\supp(c)\not\subset I$ as the degree of the argument is too small. So assume that $\supp(c)\subset I$.
By applying Lemma~\ref{lem:petrov_recursion} repeatedly, one may rewrite the RHS as
\begin{align}
\label{eq:to_be_compared}
\prod_{1\leq j\leq k}\left(\ds{x_{a_j}^{c_{a_j}}\cdots (x_{a_j}+\cdots+x_{b_j})^{c_{b_j}}}_{\{a_j,\dots,b_j+1\}}^q \times \frac{1}{\qfact{|I_j|}}\right).
\end{align}
Note that $(c_{a_j},\dots,c_{b_j})$ is nothing but $c^{I_j}$.
Again we see that the right hand sides vanishes if there exists $1\leq j\leq k$ such that $|I_j|\neq |c^{I_j}|$ for degree reasons.
Therefore, assuming that this is not the case, and recalling the definition of $A_{c^{I_j}}$, we see that the right hand in \eqref{eq:qds_arbitrary_coefficient} coincides with the expression for $p_c^I$ from \eqref{eq:kly_arbitrary_coefficient}.
This concludes our proof.
\section{Proof of Theorem \ref{th:qKM}}

Let us deal with the second part first. The identity ~\eqref{eq:qKM} follows from extracting coefficients to the right-hand side of \eqref{eq:schubert_nilcoxeter_factorization}, using  \cite[Lemma 5.4]{FS94}.  More specifically we need to set $t_j=v_j$ and $z_j=(1-q)u_j$ in \cite[Lemma 5.4]{FS94}.\smallskip

So we just need to prove \eqref{eq:schubert_nilcoxeter_factorization}, and for this we follow the steps of \cite{FS94}. The key difference is that we will need to appeal to the new Lemma~\ref{lemma:Yang_Baxter}. 

It is clearly enough to prove that for any $k\geq 0$, we have
\begin{align}
\label{eq:qKM_recursion}
\schub{n}(q^{k}u_1,&q^{k}(u_2-u_1),\ldots,q^{k}(u_{n-1}-u_{n-2}))=\\
&\schub{n}(q^{k+1}u_1,q^{k+1}(u_2-u_1),\ldots,q^{k+1}(u_{n-1}-u_{n-2}))\prod_{j=n-1}^1h_j(q^k(1-q)u_j)\nonumber
\end{align}
So we focus on proving~\eqref{eq:qKM_recursion}.
In fact, it is enough to do the case $k=0$ because the relations among the $u_i$ are homogeneous and are thus verified by the rescaled $q^ku_i$.

The case $n=2$ amounts to checking that $\schub{2}(u_1)=1+u_1v_1$ equals $\schub{2}(qu_1)h_1((1-q)u_1)=(1+qu_1v_1)(1+(1-q)u_1v_1)$. This is true as $v_1^2=0$.

The case $n=3$ can also be checked directly, and this time we need the relations in $\kly_+$.
The left-hand side, by using the relation $(1+q)u_1^2=u_1u_2$,  expands as
\begin{align}
\label{eq:n_3_tedium}
\schub{3}(u_1,u_2-u_1)=1+u_1v_1+u_2v_2+u_1^2v_2v_1+qu_1^2v_1v_2+qu_1^3v_2v_1v_2.
\end{align}
We need to that this equals
$\schub{3}(qu_1,q(u_2-u_1))h_2((1-q)u_2)h_1((1-q)u_1)$, which in turn may be rewritten as
\begin{align*}
h_2(qu_1)h_1(qu_1)\underline{h_2(qu_2-qu_1)h_2(u_2-qu_2)}h_1((1-q)u_1).
\end{align*}
The  underlined terms give $h_2(u_2-qu_1)$, say by \cite[Lemma 3.1 (ii)]{FS94}.
A routine but tedious computation repeatedly employing the relation $u_1^2=u_1(u_2-qu_1)$ in $\kly_+$ shows that the resulting expression collapses to the right-hand side in~\eqref{eq:n_3_tedium}.

\smallskip

Let $n\geq 4$. Let $R_n$ denote the right-hand side of~\eqref{eq:qKM_recursion} for $k=0$. We want to show that $R_n$ equals $\schub{n}(u_1,u_2-u_1,\ldots,u_{n-1}-u_{n-2})$. Declare $u_0=0$.
By using \eqref{eq:rec_schubert}, we have
\begin{align}%
R_n&=
\schub{n-1}(qu_1,\ldots,q(u_{n-2}-u_{n-3}))_{|v_i\mapsto v_{i+1}}\underbrace{\prod_{i=1}^{n-1}h_i(q(u_i-u_{i-1}))\prod_{j=n-1}^1h_j((1-q)u_j)}_{S\coloneqq S(u_1,\dots,u_{n-1})}
\label{eq:Rn brb}
\end{align}
For $i\geq 1$, we furthermore declare
\begin{align*}
\tilde{h}_i\coloneqq h_i(qu_{i}-qu_{i-1}); \quad \quad \tilde{h}'_i\coloneqq h_i((1-q)u_{i}).
\end{align*}
We use \cite[Lemma 3.1 (i)]{FS94} again to rewrite $S$ as follows:
\begin{align}
S&=\tilde{h}_1\cdots \tilde{h}_{n-2} \cdot {\tilde{h}_{n-1}\tilde{h}'_{n-1}}\cdot \tilde{h}'_{n-2}\cdots \tilde{h}'_1\nonumber\\
&=\tilde{h}_1\cdots \tilde{h}_{n-2} \cdot {h_{n-1}(u_{n-1}-qu_{n-2})}\cdot \tilde{h}'_{n-2}\cdots \tilde{h}'_1\nonumber\\
&=\tilde{h}_1\cdots \tilde{h}_{n-3} \cdot \underline{\tilde{h}_{n-2}h_{n-1}(u_{n-1}-qu_{n-2})\tilde{h}'_{n-2}}\cdot \tilde{h}'_{n-3}\cdots \tilde{h}'_1
\label{eq:ready_for_ybe}
\end{align}

For $j\geq 1$ one can verify that $a=qu_j-qu_{j-1}$, $b=u_{j+1}-qu_j$ and $c=(1-q)u_j$ satisfy the assumption of Lemma~\ref{lemma:Yang_Baxter}. Using this Yang--Baxter relation in the underlined factor in the right-hand side of ~\eqref{eq:ready_for_ybe}, with $j=n-2$ and these values of $a,b,c$, gives
\begin{align}
\label{eq:S_factors_via_YBE}
S&= \tilde{h}_1\cdots \tilde{h}_{n-3} \cdot {h_{n-1}((1-q)u_{n-2})h_{n-2}(u_{n-2}-qu_{n-3})h_{n-1}(u_{n-1}-u_{n-2})}\cdot \tilde{h}'_{n-3}\cdots \tilde{h}'_1\nonumber\\
&={h_{n-1}((1-q)u_{n-2})}\tilde{h}_1\cdots \tilde{h}_{n-3}\cdot \underline{h_{n-2}(u_{n-2}-qu_{n-3})}\cdot \tilde{h}'_{n-3}\cdots \tilde{h}'_1{h_{n-1}(u_{n-1}-u_{n-2})}.
\end{align}
Now note that $h_{n-2}(u_{n-2}-qu_{n-3})=\tilde{h}_{n-2}\tilde{h}'_{n-2}$, and so excluding the rightmost and leftmost factors in \eqref{eq:S_factors_via_YBE}, there remains $S(u_1,\dots,u_{n-2})$. Applying this argument repeatedly allows us to conclude by induction that
\begin{align}
\label{eq:S rewritten}
S(u_1,\ldots,u_{n-1})&=\prod_{j=n-1}^{2}h_j((1-q)u_j)\prod_{i=1}^{n-1} h_i(u_i-u_{i-1})
\end{align}

We return to the expression for $R_n$ in \eqref{eq:Rn brb}.
On substituting the expression for $S(u_1,\dots,u_{n-1})$ obtained in \eqref{eq:S rewritten}, we get
\begin{align}
R_n&=
{\schub{n-1}(qu_1,\ldots,q(u_{n-2}-u_{n-3}))_{|v_i\mapsto v_{i+1}}\prod_{j=n-1}^{2}h_j((1-q)u_j)}\prod_{i=1}^{n-1} h_i(u_i-u_{i-1})
\end{align}
Applying the inductive hypothesis to the first two factors, we may rewrite this equality as
\begin{align}
R_n&=
{\schub{n-1}(u_1,u_2-u_1,\ldots,u_{n-2}-u_{n-3})_{|v_i\mapsto v_{i+1}}}\prod_{i=1}^{n-1} h_i(u_i-u_{i-1})
\end{align}
A comparison with \eqref{eq:rec_schubert} implies the claim, and we are done.

\bibliographystyle{hplain}
\bibliography{Biblio_qDS}

\end{document}